\definecolor{wineRed}{rgb}{0.7,0,0.3}
\theoremstyle{definition}
\newtheorem{theorem}{Theorem}[section]
\newtheorem{defn}[theorem]{Definition}
\newtheorem{lemma}[theorem]{Lemma}
\newtheorem{proposition}[theorem]{Proposition}
\newtheorem{cor}[theorem]{Corollary}
\newtheorem{exe}[theorem]{Example}
\newtheorem{remark}[theorem]{Remark}
\def\conv{{\hbox{conv}\,}}
\def\diam{\hbox{diam}\,}
\def\dist{\hbox{dist}\,}
\def\bN{\mathbb{N}}
\def\bR{\mathbb{R}}
\def\d{\partial}\def\p{\partial}
\def\cH{{\mathcal{H}}}
\def\cI{{\mathcal{I}}}
\def\cK{{\mathcal{K}}}
\def\cN{{\mathcal{N}}}
\numberwithin{equation}{section}
\newcommand{\subjclass}[1]{\bigskip\noindent\emph{2010 Mathematics Subject Classification:}\enspace#1}
\newcommand{\keywords}[1]{\noindent\emph{Keywords:}\enspace#1}
\begin{document}
\title{The planar Least Gradient problem in convex domains, the case of continuous datum}
\author{Piotr Rybka\footnote{Faculty of Mathematics, Informatics and  Mechanics, the University of Warsaw,
ul. Banacha 2, 02-097 Warsaw, Poland, email: rybka@mimuw.edu.pl}, Ahmad Sabra\footnote{Department of mathematics, American University of Beirut, Riad el solh, 1107 2020 Beirut, Lebanon. email: asabra@aub.edu.lb}
\date{}}

\maketitle
\begin{abstract}
We study the two dimensional least gradient problem in a convex polygonal set in the plane. We show  existence of solutions when the boundary data are attained in the trace sense. Due to the lack of strict convexity, the classical results are not applicable. 
We state the admissibility conditions on the continuous boundary datum  $f$ that are sufficient for establishing an existence and uniqueness result. The solutions are constructed  by a limiting process, which uses  the well-known geometry of superlevel sets of least gradient functions.

 \subjclass{Primary: 49J10, Secondary: 49J52, 49Q10, 49Q20}

 \keywords{least gradient, trace solutions, convex  but not strictly convex domains, $BV$ functions.}
 
\end{abstract}

\section{Introduction}
We study  the least gradient problem
\begin{equation}\label{lg}
 \min \left\{ \int_\Omega |D u|: \ u\in BV(\Omega),\ T u = f\right\},
\end{equation}
where $\Omega$ is a bounded convex region in the plane  with polygonal boundary. 
We denote by $T: BV(\Omega) \to
L^1(\partial\Omega)$ the trace operator. 
We stress that we are interested  in solutions to (\ref{lg}) satisfying the boundary conditions in the sense of trace of $BV$ functions, 
i.e.,
$ T u = f,$  where $f$ is in $C 
(\partial\Omega)$. 

A motivation to study \eqref{lg} comes from the conductivity problem and free material design, see \cite{grs} and the references therein. A weighted least gradient problem appears in medical imaging, \cite{nachman}, and \cite{timonov}, which requires investigating the anisotropic version of \eqref{lg}, see \cite{jerrard}.

Since the publishing of the paper by Sternberg-Williams-Ziemer, \cite{sternberg}, the least gradient problem was broadly studied. In \cite{sternberg},  existence and uniqueness for continuous data $f$ were shown when the boundary of the region $\Omega$ has a non-negative mean curvature (in a weak sense) and $\d\Omega$ is not locally area minimizing. The anisotropic case was studied in \cite{jerrard}, and  \cite{moradifam}, where $\Omega$ satisfies a barrier condition that is equivalent to the conditions in \cite{sternberg} for the isotropic setting. However, these approaches are not applicable here since the barrier condition in $\bR^2$ reduces to strict convexity of $\Omega$ which is violated in the domains of interest in this paper.

The authors of \cite{spradlin} showed that the space of traces of solutions to the least gradient problem is essentially smaller than
$L^1(\partial\Omega)$ when $\Omega$ is a disk. As a result, solutions to \eqref{lg} do not necessarily exist for all $L^1$-data. Using a weaker interpretation of the boundary conditions, the authors of \cite{mazon} proved the existence of solutions to a relaxed least gradient problem for general Lipschitz domain with $L^1$ boundary data, see \cite[Definition 2.3]{mazon}. Moreover, the example in \cite{mazon} shows that even a finite number of discontinuity points leads to the loss of  uniqueness of solutions.  However, a recent article \cite{gorny2} provides a classification of multiple solutions. This result is valid for convex regions, which  need not be strictly convex.

The regularity of solutions to \eqref{lg} was also studied. In \cite{sternberg}, when $\Omega\subseteq\bR^d$,  $d\ge2$, and the boundary  has strictly positive mean curvature, then the authors showed that if $f\in C^{\alpha}(\partial \Omega)$, then the solution $u\in C^{\alpha/2}(\Omega)$. In \cite{grs}, a connection between the least gradient problem and the Beckmann minimal flow problem was established in $\bR^2$. An implication of this result is the formulation of \eqref{lg} in a mass transport setting, this was done in \cite{Filippo}. Using this fact, the authors in \cite{Filippo} proved existence and uniqueness of solutions to \eqref{lg} when $f\in W^{1,p}(\partial \Omega)$ and $\Omega$ is uniformly convex, $p\leq 2$. In this case the solution $u$ belongs to $W^{1,p}(\Omega)$.

A common geometric restriction on the domain $\Omega$ in the mentioned literature boils down to strict convexity in planar domains. 
In the present paper, the main difficulty is the lack of strict convexity of a convex region $\Omega$. 
In general, we do not expect 
existence of solutions to \eqref{lg}, even in the case of continuous $f$, once the strict convexity condition is dropped, see \cite[Theorem 3.8]{sternberg}. As a result,  we
have to develop a proper tool to examine the domain and the range of the data. For this purpose, we state admissibility
conditions on the behavior of $f$ on the sides of the boundary. In order to avoid unnecessary technical difficulties, we restrict our attention to bounded polygons that have finite or infinite number of sides. 
In the present paper we analyze (\ref{lg}) when the  data are continuous. In this case  our assumptions are as follows: $f$ satisfies the
admissibility condition (C1), see Definition \ref{admC1},  which implies  monotonicity of $f$ on the sides
of $\d\Omega$. However, if monotonicity is violated on a side of $\d\Omega$, a solution can still be constructed provided that $f$ satisfies  admissibility condition (C2), see Definition \ref{admC2}. Condition (C2) is geometric and it is related to the range of $f$ in $\Omega$. Mainly, it means that the data on a flat boundary may attain maxima or minima on large sets, making creation of
level sets of a positive Lebesgue measure advantageous,  see Section \ref{sCdata}. 
It turns out that condition (C2) has to be complemented, 

for the purpose of  killing a category of bad data. For this reason, we  introduce the Ordering Preservation Condition (OPC for short), see Definition \ref{dfopc}, and the Data Consistency Condition (DCC for short), see   Definition  \ref{dfdcc}.

Roughly speaking,
the OPC condition does not permit datum $f$, which leads to intersections of the level sets of the candidates of solutions. At the same time, the DCC 

says that, if $f$ attains a maximum on side $\ell_1$, then $f$ may  not  attain any minimum on side $\ell_2$ in front of  $\ell_1$. In the last section, we present  examples of solutions, as well as we show that dropping (C1), (C2), OPC or DCC leads to the non-existence of solutions.

The paper is organized as follows. In Section \ref{adm}, 
we introduce the admissibility conditions for continuous functions. We first investigate the existence and uniqueness of solutions, when the set $\Omega$ has finitely many sides, see Section \ref{finite Cont}. Subsequently, we deal with the boundary of $\d\Omega$ having an  infinite number of sides,  see Section \ref{infinite Cont}. In this case,  we make an additional assumption. Namely, we assume that the sides accumulate at one point and the boundary datum $f$ has a strict extremum at the accumulation point.
In the two cases mentioned above, we are also assuming that $f$ has finitely many humps. By a {\it hump} we mean a closed interval on which $f$ attains a local maximum or minimum, see Definition \ref{dehu} for details. In section \ref{inf hump}, we study the case of oscillatory data where the function $f$ has infinitely many humps on one side of $\d\Omega$. Here, we need to control the oscillations of $f$. We do so by  requiring that $f$ belongs to $BV(\d\Omega)$.

We close the paper by presenting in Section \ref{examples} a number of simple examples of boundary data, where our function $f$ violates one of the admissibility conditions, and we show that in these cases a solution might fail to exist. 

The results of this paper are summarized in the following theorem:

\begin{theorem}\label{tsuf-C} 
Let us suppose that $f\in C(\d\Omega)$, 
$\Omega$ is an open, bounded and convex set.  The boundary of $\Omega$ is polygonal, and  may  consist of a finite or infinite number of sides,  $\d \Omega = \bigcup_{j\in \mathcal{J}} \ell_j$, where $\ell_j$ are line segments. Furthermore,  $f$ satisfies the
admissibility conditions (C1) or  (C2) on all sides of $\d\Omega$,  the Ordering Preservation Condition (\ref{OPC}) and the Data Consistency Condition (\ref{dcc1}-\ref{dcc2}).\\
(a) If the number of sides  as well as the number of humps are finite, then there exists a unique solution to  problem (\ref{lg}).\\
(b) We assume that the number of sides is infinite,  there is
at most one accumulation point $p_0$. We also require  that the number of all humps of $f$ is finite
and $f$ has 
a local maximum/minimum at $p_0$. Moreover, there is $\epsilon>0$ such that the restriction of $f$ to each component of $(B(p_0,\epsilon)\cap\d\Omega)\setminus \{p_0\}$ is strictly monotone.
Under these assumptions the problem (\ref{lg}) has a unique solution.\\
(c) Suppose that only one side, $\ell$, has an infinite number of humps accumulating at its endpoint $p_0$. Point $p_0$ may be an accumulation point of sides of $\Omega$. 
If 
in addition $f\in BV(\d\Omega)$,  then problem (\ref{lg}) has a unique solution.
\end{theorem}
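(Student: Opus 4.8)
The plan is to treat part (a) as the core construction and to obtain parts (b) and (c) from it by approximation. For part (a) I would build the solution by prescribing its superlevel sets directly. The starting point is the classical fact underlying least gradient theory since \cite{sternberg}: for a least gradient function in the plane, the reduced boundary $\d\{u>t\}\cap\Omega$ is a union of area-minimizing sets, hence of straight line segments, and segments corresponding to different levels cannot cross. Accordingly, for each value $t$ in the range of $f$ I would identify the boundary points where $f=t$ and join the appropriate pairs by chords of $\Omega$. The admissibility conditions dictate the pairing: condition (C1) forces $f$ to be monotone on a side, so each level is attained once there and the pairing is unambiguous, whereas (C2) governs the non-monotone sides, where $f$ may attain an extremum on a set of positive length; there the corresponding level set itself has positive Lebesgue measure, its boundary consisting of a chord together with a flat piece of $\d\Omega$. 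The OPC is precisely what guarantees that the chords produced for different levels and different sides never meet inside $\Omega$, so that they foliate $\Omega$, while the DCC excludes the incompatible configuration in which a maximum on one side faces a minimum on the opposite side, which would force level curves of conflicting orientation to collide.

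Once a non-crossing family of level curves is produced, I would define $u$ by declaring $\{u\ge t\}$ to be the region cut off by the level curve at height $t$ on the side where $f\ge t$, and then verify three things. First, that $u$ is well defined and lies in $BV(\Omega)$: this follows from the coarea formula $\int_\Omega|Du|=\int_{\bR}\cH^1(\d\{u>t\}\cap\Omega)\,dt$, which is finite because each level curve is a chord of the bounded set $\Omega$. Second, that $Tu=f$, which holds because each chord lands exactly at the boundary points where $f=t$. Third, that $u$ is minimizing; this is the standard competitor argument. Given any $v$ with $Tv=f$, the trace condition forces $\d\{v>t\}\cap\Omega$ to separate the same boundary points as $\d\{u>t\}\cap\Omega$, and in a convex set the shortest separating curve is the straight chord, so $\cH^1(\d\{v>t\}\cap\Omega)\ge\cH^1(\d\{u>t\}\cap\Omega)$; integrating over $t$ via coarea yields $\int_\Omega|Du|\le\int_\Omega|Dv|$. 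Uniqueness then follows from the strict ordering of superlevel sets enforced by the OPC together with the comparison principle for least gradient functions: two solutions have nested superlevel sets that must coincide.

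For part (b), I would exhaust the infinitely sided boundary by polygons with finitely many sides, replacing the sides accumulating at $p_0$ by a single truncating chord, and apply part (a) to each truncation to obtain approximate solutions $u_n$. The strict extremum of $f$ at $p_0$ and the strict monotonicity of $f$ on each component of $(B(p_0,\ep)\cap\d\Omega)\setminus\{p_0\}$ are exactly what keeps the level curves near $p_0$ under control: they force the small-scale level sets to shrink to $p_0$ rather than drift, so $\{u_n\}$ is uniformly bounded in $BV$ and the traces converge to $f$. Extracting an $L^1$-convergent subsequence and passing to the limit, lower semicontinuity of the total variation shows the limit is least gradient, while the control near $p_0$ yields the correct trace; uniqueness again follows by comparison. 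For part (c), where a single side carries infinitely many humps accumulating at its endpoint, the new ingredient is $f\in BV(\d\Omega)$: I would approximate $f$ by data $f_n$ with finitely many humps, solve each by part (a), and use $\mathrm{Var}(f_n)\le\mathrm{Var}(f)$ to get a uniform $BV$ bound on $u_n$, since each hump contributes a controlled amount of level-curve length. Compactness and lower semicontinuity then close the argument as in (b).

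The step I expect to be the main obstacle is the interplay of OPC and (C2) in part (a): verifying that the chords assigned to all levels and all sides simultaneously form a single non-crossing foliation, and that the positive-measure level sets produced under (C2) glue correctly to the one-dimensional level curves produced under (C1), is the delicate geometric heart of the argument. Once the foliation is established, the $BV$ estimate, the trace identification, and the minimality and uniqueness are comparatively routine, and the limiting arguments in (b) and (c) are then driven entirely by the uniform $BV$ bounds furnished by the extremum/monotonicity hypothesis and by $f\in BV(\d\Omega)$, respectively.
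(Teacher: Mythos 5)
Your plan for part (a) defers exactly the step that would constitute the proof. You propose to build $u$ by prescribing a non-crossing family of chords for every level, but you never construct this foliation; you only assert that (C2), OPC and DCC should produce it, and you yourself flag it as ``the main obstacle.'' The paper does not build level sets by hand at all: it approximates $\Omega$ from the outside by strictly convex domains $\Omega_n$ (Lemma \ref{prap}), transports the datum by the metric projection onto $\bar\Omega$ (Definition \ref{def:fn}), invokes the classical existence result of \cite{sternberg} on each $\Omega_n$, and then proves the real technical core, Lemma \ref{le2}: a modulus-of-continuity estimate $\omega_{v_n}(r)\le K\omega_f\left(Br+A\sqrt r\right)$ uniform in $n$, which yields uniform convergence of $v_n$, hence the correct trace of the limit, with the least gradient property coming from \cite{miranda}. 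Your minimality argument also has a genuine hole: for a competitor $v$ with $Tv=f$, the sets $\d\{v>t\}\cap\Omega$ must indeed reach the jump points of $\chi_{\{f>t\}}$ on $\d\Omega$, but nothing forces them to pair those points the way your foliation does, nor to avoid fat level sets; this alternative pairing is precisely the phenomenon that (C2) governs (near a hump $[a,b]$ the cheapest boundary is $[a,y]\cup[b,z]$, not chords over $[a,b]$), so ``the shortest separating curve is the straight chord'' does not close the argument. Finally, uniqueness ``by comparison'' is unsupported; the paper needs the classification of \cite{gorny2}, which applies because the constructed solution is continuous.

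In parts (b) and (c) your limiting procedure is the standard one --- uniform $BV$ bounds, $L^1$-compactness, lower semicontinuity --- and it does not work here. First, the trace operator is not continuous with respect to $L^1$ convergence, so ``the traces converge to $f$'' cannot be extracted from an $L^1$-convergent subsequence; second, lower semicontinuity of total variation shows the limit is in $BV(\Omega)$, not that it is of least gradient. The paper's mechanism is different and essential: the exhausting domains $\Omega_n$ carry data chosen so that, by the uniqueness statement already proved on $\Omega_n$, one gets the consistency property $u_{n+1}|_{\Omega_n}=u_n$ (in part (c) this step needs Proposition \ref{quadrupole} to identify the value of $u_{i+1}$ on the cutting chord $L_i$ with $f(b_i)$). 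Hence the approximating solutions are eventually constant on compact subsets of $\Omega$; this nesting gives the pointwise limit, its continuity and trace for free, the $BV$ bound (in (c) via DCC and the coarea formula, $\int_{T_1}|Dv|\le 2\,\diam\Omega\;TV(f)$, which is where $f\in BV(\d\Omega)$ actually enters), and the least gradient property by a direct competitor argument on compactly supported perturbations --- not by semicontinuity. Note also that in part (c) your reduction to part (a) is inapplicable when $p_0$ is simultaneously an accumulation point of sides; the paper cuts the domain near $p_0$ and reduces to part (b).
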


Parts  (b) and (c) say that we can deal with an infinite number of sides, but we need additional assumptions for this purpose. 
The strategy of our proof is as follows. In part (a), we construct a sequence of strictly convex regions $\Omega_n$ converging to $\Omega$ in the Hausdorff distance. We also provide approximating data on $\d\Omega_n$. By the classical result in \cite{sternberg}, we obtain a sequence of continuous solutions $v_n$ to the least gradient problem (\ref{lg}) on $\Omega_n$.  After estimating the common modulus of continuity, we may pass to the limit using a result by \cite{miranda}. 

We cannot apply the same approach in part (b), because  the estimate on the continuity modulus of the solutions to the least gradient problem depends on the number of sides. Thus, the approximation technique we use has its limitations. This is why we restricted our attention to the cases listed in Theorem \ref{tsuf-C}.

In part (b) and (c), we approximate $\Omega$ by an increasing sequence $\{\Omega_n\}_{n=1}^\infty$ of regions  satisfying   the conditions of (a). As a result we obtain a sequence of corresponding unique solutions, $u_n$, to the Least Gradient Problem. Moreover, functions $u_n$ are  such that $u_{n+1}|_{\Omega_n} = u_n$. This condition guarantees convergence of sequence $\{u_n\}_{n=1}^\infty$ to a limit $u$ which  has the correct trace.

It is a natural and  interesting question to ask if the sufficient conditions, we specified in the above theorems, are also necessary and if we can generalize our result to discontinuous data.  
However, these problems require a quite different technique and are beyond the scope of this paper. These questions will be addressed elsewhere. 

\section{Admissibility criteria}\label{adm}

We assume that the region $\Omega\subset \bR^2$ is convex and its boundary $\d\Omega$ is a polygonal curve, i.e, it  is a union of line segments,
$$
\d \Omega = \bigcup_{j\in \mathcal{J}} \ell_j.
$$
The number of sides may be finite or infinite and  the  line segments $\ell_j$, $j\in \mathcal{J}$ are closed.

Here, we will deal only with continuous data $f\in C(\d\Omega)$ for the problem (\ref{lg}). We stress that we are interested only in solutions, such that the boundary condition is assumed in the trace sense. For this reason, it is important to monitor the behavior of $f$ on sides of  the polygon $\d\Omega$. 
We expect that data must satisfy some sort of admissibility 
conditions on $\partial \Omega$. 
We will state them in this section.

\begin{defn} \label{admC1} We shall say that
a continuous function $f\in C(\d\Omega)$  satisfies the admissibility condition (C1) on a side $\ell$ if and only if $f$ restricted to $\ell$ is monotone.
\end{defn}

In order to present the admissibility conditions for functions which are not monotone we need more auxiliary notions. 
\begin{defn}\label{dehu}
For a given  $f\in C(\d\Omega)$,
we associate with $\ell$, a side of $\d\Omega$, 
a family of closed intervals $\{I_i\}_{i\in \cI}$ such that $
I_i =[a_i, b_i]\subsetneq \ell$ and $I_i\cap \d\ell =\emptyset$. We assume that on each $I_i$, the function  $f$ is constant and attains a local maximum or minimum and each  $I_i$ is maximal with this property. 
We will call $I_i$ a {\it hump}. In other words, maxima/minima are attained on humps.
We also set $e_i = f(I_i)$, $i\in\cI$.
\end{defn}

After this preparation, we state the admissibility condition for non-monotone functions.
\begin{defn} \label{admC2}
 A continuous function $f$, which is not monotone on a side $\ell$, satisfies the admissibility condition (C2) if and only if for each hump $I_i = [a_i, b_i]\subset\ell$, $i\in\cI$ the following inequality holds, 
 \begin{equation}\label{A}
 \dist(a_i, f^{-1}(e_i)\cap(\d\Omega\setminus I_i)) + \dist(b_i, f^{-1}(e_i)\cap(\d\Omega\setminus I_i)) < |a_i-b_i|.
\end{equation}
In addition, we require that if $y_i$, $z_i\in \d\Omega$ are such that
\begin{equation}\label{defyz}
\dist(a_i, f^{-1}(e_i)\cap(\partial\Omega\setminus I_i)) = 
\dist(a_i, y_i),\qquad 
\dist(b_i, f^{-1}(e_i)\cap(\partial\Omega\setminus I_i)) = \dist(b_i, z_i),
\end{equation}
then $y_i,$ $z_i\in \d\Omega \setminus \ell$. We will use this definition of $y_i$ and $z_i$ consistently.
\end{defn}
We note that points $y_i$ or $z_i$ need not be defined uniquely. We will  keep this in mind in our further considerations.

We use here the notation $\dist (x,\emptyset)=+\infty$. Obviously, the admissibility condition (C2) does not hold if $f$ has a strict local maximum or minimum.

\begin{remark}\label{rmk2.4} At this time we present another piece of our notation. 
If $\ell$ is a side of $\partial\Omega$, then we choose a coordinate system related to $\ell$ by requiring that $\ell$ be contained in the first coordinate axis. By our choice of the coordinate system, $\Omega$ is contained in the upper half-plane, $\Omega\subset\{ x_2>0\}$. Moreover, if $I\subset \ell$ is a hump with endpoints $a$ and $b$, which are identified with their coordinates, then $a<b$. In other words, if $\ell =[p_l,p_r]$, then by design,
$$
|p_l - a|<|p_l - b|\quad\hbox{and}\quad|p_r - b|<|p_r - a|.
$$
\end{remark}
It turns out that the examples, we present in Section \ref{examples} show that (C2) alone is not sufficient to guarantee existence of solution to (\ref{lg}), 
We need further restrictions on the data, implying that the candidates for boundaries of different level sets do not intersect.
We first present the order preserving conditions preventing intersections of level sets.
\begin{defn}\label{dfopc}
We shall say that $f\in C(\partial\Omega)$ satisfies the 
{\it order preserving condition}, (OPC for short),
if for any two different humps $I_1$, $I_2$, contained in two sides $\ell_1$, $\ell_2$, which may be equal, 
any choice of the corresponding points $y_i, z_i$, $i=1,2$ defined in (\ref{defyz}) fulfills
\begin{equation}\label{OPC}
 ([a_1,y_1] \cup [b_1, z_1]) \cap ([a_2,y_2] \cup [b_2, z_2]) = \emptyset.
\end{equation}
\end{defn}
The Order Preserving Condition rules out nonsense data but by itself it is not sufficient, see Example \ref{ex4.2}.
This is why we introduce another requirement, complementing (\ref{OPC}). In order to do so, we present a new piece of notation. 
\begin{defn}
For a given  hump   $[a,b]$ and  corresponding points $y, z$, see (\ref{defyz}) we introduce $\overline{yz}_{ab}\subset\partial\Omega$ to be the arc connecting $y$ and $z$, and
not containing the hump $[a,b]$.
\end{defn}
 For any $p\in \d\Omega$, $\epsilon>0$   we write,
$$
\cN(p,\epsilon)=B(p,\epsilon)\cap\partial\Omega\setminus\overline{yz}_{ab} .
$$
\begin{defn}\label{dfdcc}
We shall say that $f\in C(\partial\Omega)$ satisfies the 
{\it data consistency condition}, (DCC for short), if at all humps, $I=[a,b]$ contained in a side $\ell$, if there is a choice of points $y$, $z$ defined in (\ref{defyz}) such that 
\begin{equation}\label{dcc1}
\inf_{x\in \overline{yz}_{ab}} f(x) \ge f([a,b]),
\end{equation}
whenever $f$ attains a local maximum on hump $[a,b]$. Here, the points $y, z$ are defined in (\ref{defyz}) and  the arc
$\overline{yz}_{ab}\subset \p\Omega$ is defined above.
Moreover,  there is $\epsilon>0$ such that
\begin{equation}\label{dcc2}
\begin{array}{ll}
f(p_2) < f(p_1)& \hbox{if } p_1, p_2 \in \cN(z,\epsilon),\ \dist(p_2,z)<\dist(p_1,z),\\
f(p_2) < f(p_1)& \hbox{if }
p_1, p_2 \in \cN(y,\epsilon),\ \dist(p_2,y)<\dist(p_1,y)).
\end{array}
\end{equation}
Alternatively, if $f$ attains a local minimum on $[a,b]$, then we replace inf by sup and we reverse the inequalities.
\end{defn}
\begin{remark}
According to our definition, humps do not contain  endpoints of $\ell$.
However, it may happen that $f$ is constant on a subinterval of $\ell$ containing an endpoint of $\ell$, which is not a hump. We will carefully address such a situation in the course of proof of Lemma \ref{le2}.
\end{remark}

We would like to discover the consequences of the admissibility conditions.
In particular, we would like to know if the restriction of $f$ to a side 
$\ell$ can have an infinite number of local minima or maxima. Interestingly, the
answer depends upon the geometry of $\Omega$. Namely, we can prove the following
statement.

\begin{proposition}\label{pr-2.1}
Let us suppose $\ell$ is a side of the boundary of $\Omega$. In addition, $\ell$
makes an obtuse angle with the rest of $\partial\Omega$ at its endpoints.
If $f$ satisfies on $\ell$ the admissibility
condition (C2) and the OPC, then $f|_\ell$ has a finite number of humps.
\end{proposition}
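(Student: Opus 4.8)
The plan is to argue by contradiction: assuming that $f|_\ell$ has infinitely many humps, I will exhibit a hump for which the admissibility inequality (\ref{A}) cannot hold. I work in the coordinates of Remark \ref{rmk2.4}, so $\ell=[p_l,p_r]$ lies on the first axis with $p_l=(c_l,0)$, $p_r=(c_r,0)$, $c_l<c_r$, and $\Omega\subset\{x_2>0\}$. Two preliminary remarks fix the setting. First, under (C2) every hump is nondegenerate: a one-point hump $a_i=b_i$ would make the right-hand side of (\ref{A}) vanish while its left-hand side is nonnegative, which is impossible. Second, distinct humps are pairwise disjoint, since two humps sharing a point would, by continuity, carry the same constant value, and their union would contradict the maximality in Definition \ref{dehu}.

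The geometric heart of the argument is the function $g(s):=\dist((s,0),\d\Omega\setminus\ell)$, defined for $s\in[c_l,c_r]$. It is $1$-Lipschitz, hence continuous; it vanishes at $c_l$ and $c_r$; and it is strictly positive on $(c_l,c_r)$, because an interior point of $\ell$ stays at positive distance from the closed set $\overline{\d\Omega\setminus\ell}$. Here the obtuse-angle hypothesis is decisive: since $\ell$ meets each neighbouring side at an obtuse interior angle, that side leaves the shared vertex heading away from $\ell$, so the point of the adjacent side nearest to any $(s,0)$ is the vertex itself. This yields a $\delta>0$ with
\[
g(s)=c_r-s\quad\text{for } s\in[c_r-\delta,c_r],\qquad g(s)=s-c_l\quad\text{for } s\in[c_l,c_l+\delta].
\]
Justifying this identity is the step I expect to demand the most care: one must check that no non-adjacent side comes closer than the vertex once $s$ is within $\delta$ of a vertex (the non-adjacent sides sit at a fixed positive distance from $\ell$, so they are irrelevant for $s$ near the corners), and one must handle that the infimum defining $g$ is approached at, but not attained on, the vertex, which belongs to $\ell$ rather than to $\d\Omega\setminus\ell$.

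Granting this, the contradiction is immediate. For a hump $I_i=[a_i,b_i]$, with $a_i=(\alpha_i,0)$ and $b_i=(\beta_i,0)$, condition (C2) provides $y_i,z_i\in\d\Omega\setminus\ell$ realising the two distances in (\ref{A}); since $y_i\in\d\Omega\setminus\ell$ we get $g(\alpha_i)\le\dist(a_i,y_i)<|I_i|$, and likewise $g(\beta_i)<|I_i|$. Assume there are infinitely many humps. Being pairwise disjoint subintervals of the finite segment $[c_l,c_r]$, their lengths tend to $0$, and by Bolzano--Weierstrass their left endpoints accumulate at some $s^\ast\in[c_l,c_r]$; along a subsequence $\alpha_{i_k}\to s^\ast$, and as $|I_{i_k}|\to0$ also $\beta_{i_k}\to s^\ast$. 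If $s^\ast\in(c_l,c_r)$, then $g(\alpha_{i_k})<|I_{i_k}|\to0$ contradicts $g(\alpha_{i_k})\to g(s^\ast)>0$. If $s^\ast=c_r$ (the case $s^\ast=c_l$ being symmetric), then for large $k$ one has $\alpha_{i_k}\in[c_r-\delta,c_r]$, so $c_r-\alpha_{i_k}=g(\alpha_{i_k})<|I_{i_k}|=\beta_{i_k}-\alpha_{i_k}$, that is $c_r<\beta_{i_k}$, contradicting $\beta_{i_k}<c_r$. In every case we reach a contradiction, so $f|_\ell$ has only finitely many humps. The argument rests on (C2) together with the obtuse-angle geometry; the OPC is available and may be used to fix the selection of $y_i,z_i$ when the nearest point is not unique, but the operative conflict is between (\ref{A}) and the lower bound furnished by $g$.
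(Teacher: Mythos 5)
You prove the right statement by a genuinely different route, and your argument is essentially correct. The paper introduces the strip $S(\ell)$ swept by the lines perpendicular to $\ell$, splits the segments $[a_i,y_i]$, $[b_i,z_i]$ into those contained in $S(\ell)$ and those crossing its lateral boundaries $s_l\cup s_r$, and in the delicate case of humps accumulating at an endpoint it genuinely needs the OPC: the OPC forces both segments of a hump lying closer to $p_l$ to cross $s_l$, and only then does the obtuse angle contradict (\ref{A}). Your single distance function $g(s)=\dist((s,0),\partial\Omega\setminus\ell)$ replaces that case analysis by one mechanism: interior accumulation of humps is impossible because $g>0$ on $(c_l,c_r)$, endpoint accumulation because $g(s)\ge c_r-s$ there, while (C2) forces $g(\alpha_i)<|I_i|\to 0$; note only the lower bound on $g$ is used, and near the vertex it already exceeds $|I_i|$ because $\beta_i<c_r$. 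In particular you never use the OPC (you invoke it only to select $y_i,z_i$, which is unnecessary, since every admissible choice lies in $\partial\Omega\setminus\ell$ by Definition \ref{admC2}), so your proof yields the slightly sharper statement that (C2) and the obtuse angles alone force finitely many humps. What the paper's route buys instead is the strip machinery, which is reused verbatim in Proposition \ref{p-2.2} and Lemma \ref{lm2.0}.

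The one step that needs repair is exactly the one you flagged. The parenthetical ``the non-adjacent sides sit at a fixed positive distance from $\ell$'' is true only when $\partial\Omega$ has finitely many sides. The paper allows infinitely many sides, which may accumulate precisely at an endpoint of $\ell$ --- this is the geometry of Theorem \ref{tm-nsk}, and Proposition \ref{pr-2.1} is invoked for such domains in Section \ref{inf hump}. In that situation $\ell$ has no adjacent side at the accumulation endpoint $p_r$ at all, and infinitely many non-adjacent sides come arbitrarily close to it, so the adjacent/non-adjacent splitting collapses. The bound $g(s)\ge c_r-s$ near $c_r$ survives, but it must be proved by convexity rather than side-by-side inspection: reading the obtuse-angle hypothesis as a condition on the tangent direction of $\partial\Omega\setminus\ell$ at $p_r$, convexity gives $\rho>0$ such that every $w\in\partial\Omega\setminus\ell$ with $|w-p_r|\le\rho$ can be written $w=p_r+t(\cos\psi,\sin\psi)$ with $t\ge 0$ and $0\le\psi\le\pi/2$, whence $|w-(s,0)|^2=(c_r-s)^2+2t(c_r-s)\cos\psi+t^2\ge(c_r-s)^2$; points with $|w-p_r|>\rho$ are at distance at least $\rho-(c_r-s)\ge c_r-s$ once $c_r-s\le\rho/2$. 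With this substitution your proof is complete in the full generality in which the paper uses the proposition.
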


\begin{proof}
Let us introduce a strip $S(\ell)$, defined as follows,
$$
S(\ell) = (\bigcup_{x\in\ell} L_x)\cap \Omega,
$$
where $L_x$ is the line perpendicular to $\ell$ and passing through $x$. We notice that the intersection of the  boundary of $S(\ell)$ with 
$\Omega$ consists of two line segments,
$$
\partial S(\ell) \cap \Omega = s_l \cup s_r.
$$
We follow  the convention specified above in Remark \ref{rmk2.4} and we denote $\ell$ by $[p_l,p_r]$.  We assume that $s_m$ passes through $p_m$, where $m= l, r$.

Given a hump $[a_i,b_i]$, we can have the following situations: each of $[a_i, y_i], [b_i, z_i] $ may either be contained in $S(\ell)$  
or 
may intersect $s_l \cup s_r$, here $y_i$'s and $z_i$'s are defined in (\ref{defyz}).

We claim that there are only a finite number of segments $[a_i, y_i], [b_i, z_i] $ 
contained in $S(\ell)$. Indeed, if it were otherwise, then 
there  would be a sequence $k_n$ such that
$|b_{k_n} - a_{k_n}|\to 0$ as $k_n$ goes to infinity. On the other hand,
\begin{equation}\label{czero}
\min\{ \dist(a_i, \partial \Omega\cap S(\ell)\setminus\ell), \dist(b_i, \partial \Omega\cap S(\ell)\setminus\ell)\}\ge c_0>0.
\end{equation}
But these two conditions combined contradict the admissibility conditions (C2).

In the next step, we claim that there are only a finite number of segments $[a_i, y_i], [b_i, z_i] $
intersecting $s_l \cup s_r =  \partial S(\ell) \cap \Omega $.  Let us  suppose otherwise, then 
we notice that in case of  an infinite  number of humps contained in a side $\ell$ condition (\ref{czero}) is no longer at our disposal.

We notice that the only accumulation point of these humps may be an endpoint of 
$\ell$, for otherwise (\ref{czero}) would be valid implying the violation of the admissibility condition (C2). Let us take a subsequence of humps $[a_{i},b_{i}]$ with the length converging to zero and 
with the endpoints  converging to an endpoint of $\partial \ell$.  
Without the
loss of generality, we can assume that $a_i\to p_l$.

We claim that for $a_i$ sufficiently close to $p_l$, the interval $[a_i, y_i]$ intersects $s_l$. Let us suppose otherwise, i.e.  $[a_i, y_i]$ intersects $s_r$ or  $[a_i, y_i]$ does not intersect neither  $s_l$ nor $s_r$. For large $i$, we have $|b_i - a_i|< \frac 18 \min\{ |p_r - p_l|, |s_r|, |s_l|\}$. If $[a_i, y_i]\cap s_r\neq \emptyset,$ then $|a_i - y_i|> \frac 12  |p_r - p_l|$. If $[a_i, y_i]$ does not intersect neither  $s_l$ nor $s_r$, then $|a_i - y_i|> \frac 12 \min\{ |s_r|, |s_l|\}$.
In both cases condition (C2) implies,
$$
\frac 18 \min\{ |p_r - p_l|, |s_r|, |s_l|\}> |a_i - b_i|>
|a_i - y_i| + |b_i - z_i| >\frac 12 \min\{ |p_r - p_l|, |s_r|, |s_l|\},
$$
yielding a contradiction.

As a result, infinitely many  $[a_i, y_i]$ must intersect $s_l$. In this case, there exists a hump $[a_k, b_k]$ such that $[a_k, b_k]\subset [p_l, a_i]$, then both $[a_k,y_k]$ and $[b_k,z_k]$ must intersect $s_l$ due to the Order Preserving Condition, (\ref{OPC}).

Since both segments $[a_k, y_k]$ and $[b_k,z_k]$ intersect $s_l$ and
the angle between $\ell$ and $\partial \Omega$ is obtuse,
then by a simple geometry condition we deduce that \eqref{A} is violated again. 
\end{proof}

Actually, we will make the above statement even  more precise. 

\begin{proposition}\label{p-2.2}
Let us consider $\ell$, a side of $\d\Omega$, 
the strip $\partial S( \ell)\cap \Omega$ and the set of all humps contained in $\ell$, $\{I_i\}_{i\in \cI}$, where $ I_i = [a_i,b_i]$. If $\partial\Omega$ forms obtuse angles at $p_m\in\partial \ell$, where $m=l$ or $m=r$, %(or there are lines tangent to $\partial\Omega$ there), 
then %each of the intersections 
$$
s_m \cap \left( \bigcup_{i\in \cI}([a_i,y_i]\cup [b_i,z_i])\right), %\qquad
%s_r \cap \left( \bigcup_{i\in \cI}([a_i,y_i]\cup [b_i,z_i])\right),
$$ 
%or $ s_r$ with intervals of the form $[a_i, y_i]$, $[b_i,z_i]$, 
consists of at most one point. Here, $y_i,z_i$, $i\in \cI$ are any points satisfing (\ref{defyz}).  
%In particular, $\ell$ contains at most one hump. 
%\textcolor{red}{Even if the above claim is correct what happens why there can be no more than one hump inside $S(\ell)$}
\end{proposition}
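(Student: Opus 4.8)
I would work in the coordinate system of Remark~\ref{rmk2.4}: $\ell$ lies on the first axis, $\Omega\subset\{x_2>0\}$, $p_l$ is the left endpoint of $\ell$, and each hump endpoint is identified with its first coordinate, so that $a_i<b_i$. It suffices to treat $m=l$, the case $m=r$ being obtained by the reflection $x_1\mapsto|p_r-p_l|-x_1$, which interchanges $p_l$ with $p_r$ and the segments $[a_i,y_i]$ with $[b_i,z_i]$. Because $\partial\Omega$ makes an obtuse angle at $p_l$, the vertical segment $s_l$ issuing from $p_l$ lies in $\overline\Omega$, and the side of $\partial\Omega$ adjacent to $\ell$ at $p_l$ leaves $p_l$ into the half-plane $\{x_1<0\}$. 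Hence any of the segments $[a_i,y_i]$ or $[b_i,z_i]$ that meets $s_l$ must have its second endpoint ($y_i$, respectively $z_i$) in $\{x_1\le 0\}$, since the endpoint lying on $\ell$ has positive first coordinate and the meeting point sits on the line $\{x_1=0\}$.

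First I would eliminate the right segments. If $[b_i,z_i]$ met $s_l$, then $z_i\in\{x_1\le 0\}$ would force $|b_i-z_i|\ge b_i>b_i-a_i=|I_i|$; together with \eqref{defyz} this contradicts the admissibility inequality \eqref{A}, which gives $|b_i-z_i|<|I_i|$. Thus only the left segments $[a_i,y_i]$ can meet $s_l$.

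The heart of the argument is to show that at most one hump can meet $s_l$. Suppose two humps $I_i,I_j$ do, with $a_i<a_j$; since distinct humps are disjoint and a common endpoint would violate \eqref{OPC}, we have $b_i<a_j$. By the previous step the segment of $I_j$ meeting $s_l$ is $\sigma_j:=[a_j,y_j]$ with $y_j\in\{x_1\le 0\}$, and $\sigma_j$ touches $\ell$ only at $a_j$. Consequently $\sigma_j$ is a chord that splits $\Omega$ into a $p_l$-part and a $p_r$-part, and, as $b_i<a_j$, the point $b_i$ lies in the $p_l$-part. By \eqref{OPC} the segment $[b_i,z_i]$ does not meet $\sigma_j$; being a chord of $\Omega$, it therefore cannot pass from the $p_l$-part to the $p_r$-part, so $z_i$ also belongs to the $p_l$-part. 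The portion of $\partial\Omega\setminus\ell$ contained in the $p_l$-part is the boundary arc running from $p_l$ up to $y_j$; by the obtuse angle it leaves $p_l$ into $\{x_1<0\}$, and by convexity the line $\{x_1=0\}$ meets $\partial\Omega$ at most twice, so this arc stays in $\{x_1\le 0\}$. Since $z_i\in\partial\Omega\setminus\ell$ by condition (C2), we conclude $z_i\in\{x_1\le 0\}$, whence $|b_i-z_i|\ge b_i>|I_i|$, contradicting \eqref{A} once more.

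Hence at most one hump meets $s_l$; the single segment that does is not vertical (its endpoint on $\ell$ has first coordinate $a_i>0$), so it crosses $s_l$ in exactly one point, and the intersection in the statement consists of at most one point. I expect the main obstacle to be exactly the configuration in which the two crossing chords $[a_i,y_i]$ and $[a_j,y_j]$ are ``nested'' and do not themselves intersect, so that applying \eqref{OPC} to them directly yields nothing; the decisive idea is to apply \eqref{OPC} instead to the companion segment $[b_i,z_i]$ together with the separating chord $\sigma_j$, which forces $z_i$ onto the far side $\{x_1\le 0\}$ and violates \eqref{A}.
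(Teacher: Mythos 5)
Your proof is correct and follows essentially the same strategy as the paper's: first rule out any right segment $[b_i,z_i]$ meeting $s_l$ via the inequality \eqref{A}, then, assuming two left segments meet $s_l$, use the farther chord $[a_j,y_j]$ as a separator together with \eqref{OPC} to force $z_i$ beyond the line through $s_l$, contradicting \eqref{A} again. The only difference is that you fill in the coordinate computations and the convexity/separation details that the paper leaves as ``the geometry implies,'' which makes your version more explicit but not substantively different.
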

\begin{proof} We may assume for the sake of definiteness that $p_m = p_l.$
We use the notation convention introduced in Remark \ref{rmk2.4}, in particular,
%Our notational convention is that \textcolor{red}{How can we assume this, both edges of the hump can be close to one of the edges of $\ell$.}
$$
\dist(p_l, a_i)<\dist(p_l, b_i), \qquad \dist(p_r, b_i)< \dist(p_r, a_i).
$$

Let us suppose that our claim does not hold and the set
$s_l \cap \left( \bigcup_{i\in \cI}([a_i,y_i]\cup [b_i,z_i])\right)$ contains more than one element.
(The argument for $s_r \cap \left( \bigcup_{i\in \cI}([a_i,y_i]\cup [b_i,z_i])\right)$ will be the same.)

%(\AS{Something is wrong in the notation of the next two paragraphes,  proof should be revisited or clarified: the fact that $[a_i,z_i]$ intersects $C_1$ is not related to OPC, OPC compares distinct humps and not the same hump})

Due to (\ref{A}), we conclude that  $[a_i,y_i] \cap [b_i,z_i] =\emptyset$. Thus, if $[b_i, z_i]$ intersects $s_l$, then  %the simple planar geometry implies that As a result, must 
$[a_i, z_i]$ intersects $s_l$ too. Then, the geometry implies that the admissibility condition (C2) is violated.

Let us suppose that $[a_i,y_i]$ and $[a_k,y_k]$ intersect $s_l$, $j\neq k$ and $|a_i-p_l|<|a_k-p_l|$. In this case, the OPC %(\ref{OPC})} 
implies that $[b_i, z_i]$ intersects $s_l$. If this happens,  then we are back to the case we have just discussed, hence %geometry implies that 
the admissibility condition (C2) is violated. Our claim follows.
\end{proof}
%if $[b_i,z_i]\cap s_l\neq \emptyset$, (resp. $[a_i,y_i]\cap s_r\neq \emptyset$), then automatically \textcolor{red}{yes, here where I am stuck!}
%$[a_i,y_i]\cap s_l\neq \emptyset$, (resp. $[b_i,z_i]\cap s_p\neq \emptyset$). If so, then the interval  $[b_i,z_i]$ (resp. $[a_i,y_i]$) is longer then $|b_i-a_i|$ because the angle at $p$ is obtuse. 
% 
%Let us suppose now that $[a_i,y_i]\cap s_t\neq \emptyset$ and   $[a_j,y_j]\cap s_t\neq \emptyset$, where $t=l$ or $t=r$ and $j\neq i$. If this happens, we can find a $b_k$ in the interval $[a_i, a_j]\subset\ell$. We  can use the part that we have already shown to deduce our claim.
%
%The same argument applies when $[b_i,z_i]\cap s_t\neq \emptyset$ and   $[b_j,z_j]\cap s_t\neq \emptyset$, where $t=l$ or $t=r$ and $j\neq i$.
%
%If $\ell$ contained more than one hump, then there existed
%$[a_i,y_i]\cap s_l\neq \emptyset$ and $[a_j,y_j]\cap s_l\neq \emptyset$ or
%$[b_i,z_i]\cap s_r\neq \emptyset$ and $[b_j,z_j]\cap s_r\neq \emptyset$ for $i\neq j$. We have already seen that this is impossible.

%\textcolor{red}{Two remarks about notation here: (1) we did not use $H(\ell)$ here? (2) what is $\ell$ here? I understood that it is a general line segment, if this is a case I suggest changing the notation since $\ell$ in our paper always mean side. But also remove the notation since we won't be using $H(l)$ in the next pages.}
We will make further  observations about the structure of admissibility conditions. A particularly interesting one is the case when $\partial\Omega$ has an infinite number of sides. For the sake of simplicity,  we will assume that $\{\ell_k\}_{k \in \cK}$ has at most one accumulation point, i.e. if $\ell_k = [p^k_l, p^k_r]$, then $p^k_l, p^k_r \to p_0$, as $k$ goes to infinity. 
We assume that $\ell_k= [p^k_l, p^k_r]$ are  such  that $\dist( p^k_l,p_0) > \dist( p^k_r,p_0)$ with $p_0$ an endpoint of a side $\ell_0$. We note that the same argument applies in case of a finite number of accumulation points.
%({\color{red} We can't always arrange if they are countable, we have to take a subsequence})
%\sout{ In order to avoid  unnecessary complications, we assume that %`almost all $\ell_k$'s are on one side of $p_0$'. %In order to make it precise, we will denote by $H(\ell)$ the half-plane, whose boundary contains $\ell$, $H(\ell)\cap \partial\Omega\neq \emptyset$, and $\Omega \subset H(\ell)$. 
%the accumulation point is an endpoint of a side $\ell_0$.}

%\begin{defn}\label{one-side}
%\textcolor{blue}{Let us  suppose that  $p_0$ is an accumulation point of  a sequence of sides $\{\ell_k\}_{k=1}^\infty$ and $L$ is a line such that $\Omega\cap L =\emptyset$ and $p_0\in \bar\Omega\cap L$. 
%Let $\nu$ be a unit vector perpendicular to $L$. We say that {\it almost all $\ell_k$'s are on one side of $p_0$} if there is a choice of $L$} so that infinitely many $\ell_k$ are contained in $\{y\in \bR^2:\ (y-p_0)\cdot\nu <0\}$, while the half-plane $\{y\in \bR^2:\ (y-p_0)\cdot\nu >0\}$ contains only their finite number. We also set
%$$
%B^+_\rho := \{x\in B(p_0,\rho):\ \nu\cdot(x-p_0) >0\},\qquad
%B^-_\rho := \{x\in B(p_0,\rho):\  \nu\cdot(x-p_0) <0\}.
%$$
%\end{defn}
 
After this preparation, we will see what kind of restrictions impose the admissibility condition (C2) on the boundary data as well as  on any sequence of sides,  accumulating at $p_0$. The boundary $\partial\Omega$ at $p_0$ may have a tangent line or form an angle. The angle may be obtuse (including the case of a tangent line) or acute. We shall see that the measure of the angles plays a major role. Namely,
we show that:\\
(a) if the angle is obtuse, then $p_0$ may  not be any accumulation point of any sequence of humps contained in $\ell$. \\%\sout{Additionally, if the angle at the other endpoint of $\ell$ is obtuse, %or a there is a tangent line there, 
%then we can  only have a single  %finite number of 
%hump on $\ell$.} \textcolor{red}{The  crossed out sentence is not true.}\\
(b) if the angle at $p_0$ is acute, then we may have an infinite number of humps on $\ell$,  accumulating at $p_0$.

%When we deal with  a sequence of sides $\{\ell_k\}_{k=1}^\infty$, then we 
%assume that they are so numbered that $\dist(\ell_k,p_0)\to 0$ and almost all $\ell_k$ are on one side of $p_0$. We are ready to present the following fact.

In the next Lemma, we will consider the case (a). An example to support (b) is presented in the last section, see Example 4.5.
%called (a) above. %Case (b) will be treated later, in Lemma \ref{l-inter}, when %$p_0$ is an endpoint of a side and 
%$f$ restricted to $\ell$ satisfies the admissibility condition (C2).

%\textcolor{red}{I agree to this Lemma even if Proposition 2.6 is incorrect.}
\begin{lemma}\label{lm2.0}
We assume that $p_0$ is an accumulation point of  $\{\ell_k\}_{k=1}^\infty$, where $p_0$ is an endpoint of a side $\ell_0$.
%, the sides of $\partial\Omega$, which are almost all on one side of $p_0$.
Moreover, 
%Let us assume that the above condition holds and 
$\partial\Omega$ forms an obtuse angle at $p_0$, %with $p_0$ accumulation point of sides $\ell_k$ satisfying Definition \ref{one-side}. Moreover,
and $f\in C(\partial\Omega)$ satisfies the admissibility condition %(C1), 
(C2) on the sides of $\partial \Omega$.
Then, there is $\rho>0$ with the following properties:\\
(1) If $\ell_k \subset  B(p_0,\rho)$, then $\ell_k$ contains at most one hump $I_k$.\\
%The intersection $B(p_0, \rho)\cap\partial\Omega$ is connected.
(2) If $I_k$ is the hump of
$\ell_k \subset B(p_0, \rho)$ mentioned in (a), %\cap\d\Omega$, 
then intervals $[a_k,y_k], [b_k,z_k]$, %where $y_k, z_k$ satisfy (\ref{defyz}), 
must intersect  $\partial S(\ell_k) \cap \Omega$.
\end{lemma}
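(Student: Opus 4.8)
The plan is to prove part (2) first, that each hump's two connecting segments must exit the perpendicular strip $S(\ell_k)$, and then to derive part (1) from a counting argument against Proposition~\ref{p-2.2}.

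I would begin by fixing $\rho$. Since $\Omega$ is convex with sides accumulating only at $p_0$, the sum of the exterior (turning) angles is finite, so the exterior angles tend to $0$ and the interior angles at the vertices approaching $p_0$ tend to $\pi$. Hence, for $\rho$ small enough, every side $\ell_k\subset B(p_0,\rho)$ is an accumulating side (not $\ell_0$) whose two endpoints carry obtuse interior angles, and moreover $|\ell_k|<c$, where $c>0$ is the geometric constant produced next. The key estimate is that, because $\d\Omega$ forms an obtuse angle at $p_0$, the inward normal to $\ell_k$ points into the bulk of $\Omega$, away from $\ell_0$: passing to the limiting wedge at $p_0$, the ray from $p_0$ in the inward–normal direction of the limiting edge enters the interior of $\Omega$ and travels a fixed positive distance $\delta_0$ before meeting $\d\Omega$. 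By continuity, for $\ell_k\subset B(p_0,\rho)$ the portion of $\d\Omega$ forming the ``top'' of $S(\ell_k)$ lies at perpendicular distance $\ge \delta_0/2=:c>0$ from $\ell_k$, uniformly in $k$. The same obtuseness, now at the \emph{endpoints} of $\ell_k$, forces the two neighbouring sides to emanate on the far side of the lateral boundaries $s^k_l,s^k_r$, hence outside $S(\ell_k)$; and the remaining accumulating sides project beyond the endpoints $p^k_l,p^k_r$ (they lie roughly in line with $\ell_k$ along the limiting edge), so they are outside $S(\ell_k)$ as well.

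Consequently $\d\Omega\cap\overline{S(\ell_k)}$ meets $\{x:\dist(x,\ell_k)<c\}$ only in $\ell_k$ itself. By the admissibility condition (C2), see (\ref{A}), we have $\dist(a_k,y_k)+\dist(b_k,z_k)<|a_k-b_k|\le|\ell_k|<c$ with $y_k,z_k\in\d\Omega\setminus\ell_k$, so $\dist(y_k,\ell_k)<c$ and $\dist(z_k,\ell_k)<c$; therefore $y_k,z_k\notin\overline{S(\ell_k)}$, i.e. they project strictly outside $\ell_k$. Since $S(\ell_k)$ is convex with $\d S(\ell_k)\cap\Omega=s^k_l\cup s^k_r$, and since $a_k,b_k\in\ell_k\subset\overline{S(\ell_k)}$ while the open chords $[a_k,y_k],[b_k,z_k]$ run through the interior of $\Omega$ (their endpoints do not lie on a common side of $\d\Omega$), each chord must cross $\d S(\ell_k)$ inside $\Omega$, that is, it meets $s^k_l\cup s^k_r$. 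This is exactly part~(2).

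For part~(1), suppose toward a contradiction that some $\ell_k\subset B(p_0,\rho)$ carried two distinct humps $I^1,I^2$. Each hump $I^j$ yields segments $[a^j,y^j],[b^j,z^j]$; by (\ref{A}) one has $[a^j,y^j]\cap[b^j,z^j]=\emptyset$, and by the Order Preserving Condition (\ref{OPC}) the pair from $I^1$ is disjoint from the pair from $I^2$, so all four segments are pairwise disjoint. By part~(2) each of the four crosses $s^k_l\cup s^k_r$, and disjointness forces four \emph{distinct} crossing points. However, both endpoint angles of $\ell_k$ are obtuse, so Proposition~\ref{p-2.2} bounds $s^k_l\cap\big(\bigcup_{i}([a_i,y_i]\cup[b_i,z_i])\big)$ and $s^k_r\cap\big(\bigcup_{i}([a_i,y_i]\cup[b_i,z_i])\big)$ by one point each, hence at most two points in total. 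This contradiction shows $\ell_k$ has at most one hump, and part~(2) then applies to that unique hump.

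The main obstacle is the uniform geometric estimate of the third sentence: converting the \emph{single} obtuse angle at $p_0$ into a lower bound $c$ on the heights of all the shrinking strips $S(\ell_k)$, while simultaneously certifying that neither the neighbouring sides nor any other accumulating side intrudes into $S(\ell_k)$ within distance $c$ of $\ell_k$. This is precisely the step that uses obtuseness both at $p_0$ and at the vertices, and it is what separates case~(a) from the acute case~(b), where the perpendiculars would instead fall back toward $\ell_0$ and no such uniform $c$ exists.
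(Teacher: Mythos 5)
Your proposal is correct and follows essentially the same route as the paper's own proof: the same choice of $\rho$ (obtuse angles with neighbours near $p_0$, sides shorter than the uniform strip height $c_0$ produced by the obtuse angle at $p_0$), the same use of (C2) to force $[a_k,y_k]$ and $[b_k,z_k]$ to exit $S(\ell_k)$ through $s_l\cup s_r$, and the same counting against Proposition \ref{p-2.2} to rule out a second hump. The only differences are cosmetic: you prove part (2) before part (1), and you spell out the disjointness (via (\ref{A}) and the OPC) and the convexity details that the paper leaves implicit.
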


\begin{proof}
There is $\rho>0$ such that every $\ell_k \subset B(p_0,\rho)$ forms obtuse angles with its neighbors. Due to the obtuse angle at $p_0$, the length of each component of $\partial S(\ell_k)$ may be made strictly bigger than a fixed number $c_0$, see (\ref{czero}), while the length of $\ell_k$ goes to zero. We further restrict $\rho$, by requiring that all sides $\ell_k$, contained in  $B(p_0,\rho)$, have length smaller than $c_0$. Thus, existence of 
%immediately implies existence of any
a hump $[a,b]\subset \ell_k \subset B(p_0,\rho)$, such that $[a,y]$ or $[b,z]$ are contained in $S(\ell_k)$, violates (C2). Moreover, Lemma \ref{p-2.2} implies that at most one interval of the form $[a,y]$ (resp. $[b,z]$) intersects $\d S(\ell_k)$, where $[a,b]$ is a hump contained in $\ell_k$. In other words, any side $\ell_k \subset B(p_0,\rho)$ may have at most one hump. This  observation implies part (2) too.
%We note that due to Proposition \ref{p-2.2} we have  humps $I_k^1,\ldots, I_k^{l_k}$  contained in $\ell_k$. If our claim were not true than the lengths of  $[a_k,y_k], [b_k,z_k]$ would exceed  $c_0$ in violation of the admissibility condition (C2).
\end{proof}

\section{Construction of solutions for continuous data}\label{sCdata}

Solutions to (\ref{lg}) are constructed by a similar limiting process used in \cite{grs}. To prove Theorem \ref{tsuf-C}, we first find a sequence of strictly convex domains, $\{\Omega_n\}_{n=1}^\infty$, approximating $\Omega$. Then, we define $f_n$ on $\d\Omega_n$ in a suitable way. After this preparation, we invoke the classical result in \cite{sternberg}, to conclude existence of $\{v_n\}_{n=1}^\infty$, solutions to the least gradient problem in $\Omega_n$ with data $f_n$ on the boundary of $\Omega_n$. 
This approach is good, when $\d\Omega$ has finitely many sides. The case of infinitely many sides is dealt  in a separate section.

\subsection{Case of finitely many sides of $\partial\Omega$, with finitely many humps}\label{finite Cont}
The construction of strictly convex region $\Omega_n$ is straightforward, when $\d\Omega$ is a polygon with a finite number of sides.  This is  the content of the following Lemma.

%\textcolor{red}{I do not understand why this Lemma fails for infinitely sides? The argument I think still work, Please check.}
\begin{lemma}\label{prap}
 Let  us suppose that $\Omega\subset\bR^2$ is a convex region with finitely many sides $\ell_k$, $k\in\cK=\{1,\ldots, K\}$. Then,
 there is a sequence of strictly convex bounded regions, $\Omega_n$ containing $\Omega$ and such that $\bar\Omega_n$  converges to
 $\bar\Omega$ in the Hausdorff metric.  Moreover, if $n\leq m$, then
$$\Omega_m\subseteq\Omega_n.$$
\end{lemma}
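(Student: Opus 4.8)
The plan is to approximate the convex polygon $\Omega$ by strictly convex regions obtained via a controlled ``rounding'' of the interior angles at the vertices, while enlarging the domain so that the monotonicity $\Omega_m\subseteq\Omega_n$ for $n\le m$ is automatic. First I would enumerate the vertices $q_1,\dots,q_K$ of $\partial\Omega$ (the endpoints shared by consecutive sides $\ell_k$). The only failure of strict convexity is the presence of the flat segments $\ell_k$; so the strategy is to replace each side $\ell_k$ by a slightly bulging convex arc lying \emph{outside} $\bar\Omega$, chosen so that the resulting region is strictly convex, contains $\Omega$, and deviates from $\ell_k$ by at most a parameter $\delta_n\to 0$.

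Concretely, for each side $\ell_k$ with outward unit normal $\bn_k$, I would push the side outward into a circular arc of very large radius $R_n$ (equivalently, small curvature $1/R_n$) whose chord is $\ell_k$ and whose bulge height is some $h_n\to 0$. Replacing every side by such an arc produces a closed curve; the candidate domain $\Omega_n$ is the region it bounds. Strict convexity follows because each arc has strictly positive curvature and the arcs meet at the (rounded) corners without creating any flat piece or reflex angle, provided $h_n$ is small relative to the side lengths and the interior angles; since there are only finitely many sides, a single uniform $h_n$ works. Containment $\Omega\subset\Omega_n$ holds because each arc bulges outward across the chord $\ell_k$, so the new boundary lies on the far side of $\partial\Omega$ from $\Omega$. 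To get the nesting, I would take the bulge heights decreasing, $h_{n+1}<h_n$, and ensure the arc families are nested; the cleanest way is to define $\Omega_n$ as an \emph{outer parallel-type} enlargement, e.g.\ $\Omega_n = \{x : \dist(x,\Omega)<\eta_n\}$ intersected with a strict-convexity correction, but the more transparent route is to build the arcs so that for $n\le m$ the $m$-th arc over each side lies inside (closer to $\ell_k$ than) the $n$-th arc, giving $\Omega_m\subseteq\Omega_n$ directly.

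For Hausdorff convergence, I would observe that $\bar\Omega\subseteq\bar\Omega_n$ and that every point of $\partial\Omega_n$ lies within $h_n$ of $\partial\Omega$ (each replacement arc is within bulge height $h_n$ of its chord, and the corner modifications are controlled by the same scale). Hence $d_H(\bar\Omega_n,\bar\Omega)\le C h_n\to 0$, where $C$ depends only on the finitely many angles. Taking $h_n\to 0$ monotonically closes the argument.

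The main obstacle I anticipate is not the interior of the arcs but the \emph{corners}: at each vertex $q_k$ one must glue two outward-bulging arcs so that the union is still strictly convex (no reentrant angle, no residual straight segment, and the tangent directions turn monotonically). For convex polygons the interior angle at $q_k$ is less than $\pi$, so there is room to insert a small strictly convex ``cap'' joining the two arcs; the quantitative requirement is that $h_n$ be small compared with $\min_k \dist(q_k, \text{opposite features})$ and with the angle deficits $\pi-\theta_k$. Making this gluing simultaneously (i) strictly convex, (ii) outward of $\Omega$, and (iii) nested in $n$ is the delicate bookkeeping; but because $K$ is finite, all the smallness conditions can be met by one sufficiently small $h_n$, and letting $h_n\downarrow 0$ finishes the proof.
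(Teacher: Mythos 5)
Your proposal is correct and follows essentially the same route as the paper's proof: there, too, each side $\ell_k$ is replaced by a strictly convex arc bulging outward (the graph of a strictly convex function $\kappa^n_k$ whose Hausdorff distance to $\ell_k$ is less than $\frac{1}{n}$), the nesting $\Omega_m\subseteq\Omega_n$ comes from making the bulges monotone in $n$, and Hausdorff convergence comes from the bulge size tending to $0$. The only difference is how convexity at the corners is secured: the paper prescribes the arcs' endpoint slopes to be $\frac{1}{n}$ times the slopes of supporting lines $L(p)$ satisfying $L(p)\cap\bar\Omega=\{p\}$ at each vertex, which makes the two arcs meet at an angle strictly less than $\pi$ for every $n$, whereas you achieve the same by taking the bulge heights $h_n$ small compared with the angle deficits $\pi-\theta_k$ --- an equivalent piece of bookkeeping, since there are only finitely many vertices.
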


\begin{proof} 
For  each vertex $p\in \partial \Omega$, we select $L(p)$ a line passing through $p$ such that $L(p)\cap \bar \Omega=\{p\}$. 

For each side $\ell_i=[p_i^1,p_i^2]$, $i=1,\ldots, K$,
of $\partial \Omega$, we consider the positively 
oriented coordinate system with origin $p_i^1=
(0,0)$, such that $\Omega$ is in the upper half 
plane, and $\ell_i=[0,d_i]\times \{0\}$. On each side, we
construct an increasing sequence  of strictly 
convex functions $\kappa^n_i:[0,d_i]\to \mathbb R$, with
$\kappa^n_i(0)=0$, $\kappa^n_i(d_i)=0$, 
$$
\dfrac{d\kappa^n_i}{dx}(0)= \frac 1n s_1,\qquad \dfrac{d\kappa^n_i}{dx}(d_i) = \frac 1n s_2,
$$ 
where $s_k$ is the slope of $L(p_i^k)$, $k=1,2$
and such that the Hausdorff distance between 
$c^n_i:=\hbox{graph}\,(\kappa^n_i)$ and $\ell_i$ is smaller
than $\frac{1}{n}$. We notice that 
$\Omega_n=\bigcup_i c_n^i$ is strictly convex and 
$\Omega_m\subseteq \Omega_n$ if $m\geq n$. Finally, the construction of
$\bar\Omega_n$ implies that $\bar\Omega_n$ converges to $\bar\Omega$ in the Hausdorff metric.
\end{proof}

We are now ready to define the trace functions $f_n$ on $\Omega_n$. 
%Notice that $\Omega_n\setminus \bar\Omega$ has finitely many components each contains one side only. Let $T_1,T_2,\cdots,T_K$ be the components of $\Omega_n\setminus \bar\Omega$, such that $\ell_k\in \partial T_k$.  Notice that $\cup \partial T_k\cap \partial \Omega_n =\partial \Omega_n\setminus \partial \Omega$. 

\begin{defn}\label{def:fn}
Assume $\Omega$ is an open convex set whose boundary is a polygon with  sides $\ell_k$, $k\in \mathcal K=\{1,\ldots,K\}$. Let $\Omega_n$ be the strictly convex sets as constructed in Lemma \ref{prap}.
%and $T_1,\cdots T_K$ the components of $\Omega_n\setminus \bar \Omega$. 
We take the orthogonal projection $\pi:\bR^2 \to \bar\Omega$ onto a given convex closed set, see \cite{brezis}. Then, we define
$f_n\in C(\partial\Omega_n)$ by the following formula,
\begin{equation}\label{defn}
 f_n(y):= f(\pi( y)).
\end{equation}
\end{defn}
%If $x\in \partial \Omega_n\setminus \partial \Omega$, then $x\in \partial\partial T_k$ for a unique $k$, then $\Pi x$ is the unique point in $\partial \Omega$ such that the line joining $x$ to $\Pi x$ is orthogonal to the line containing the flap part $\ell_k$; and $\Pi|_{\partial \Omega\cup \partial \Omega_n}=id$.

%Notice that by Lemma \ref{prap}, $\Pi$ is bijective. Also $\Pi x$ may not be in a side of $\partial \Omega$, 
%We define the continuous functions $f_n:\partial \Omega_n \to \bR$, by $f_n(x)=f(\Pi x).$
%\end{defn} 

% $\pi_{k}(x):\bar \Omega$
%
%we define $f_n:\d\Omega_n\to \bR$ as follows.
%On each $c^n_k,$ for $(x,y) = (t,\psi^n_\ell(t))$, we set, 
%\begin{equation}\label{defn}
%f_n(x,y) \equiv f_n(t,\psi^n_\ell(t)) = f(t), \quad n\in\bN.
%\end{equation}
%In other words, $f_n$ has the same regularity properties as $f$ does.  

%\textcolor{red}{Not sure about what you wrote here, how do you project the strictly convex components?}
%\textcolor{blue}{This is what I understood.} We defined $f_n$ on $\partial \Omega_n$ using orthogonal projection with respect to the sides. Assume $\Omega$ has only one side $\ell=[p_1,p_2]$ then there exists $r$ such that $\partial \Omega$ and $\partial \Omega_n$ coincides away from $\ell\cup B(p_1,r)\cup B(p_2,r)$. Let $\pi_n$ be the 

This definition preserves continuity properties of $f$.
%Since $\pi$ is Lipschitz continuous with Lipschitz constant equal to one, we deduce that all $f_n$ have the continuity modulus of $f$.

%In case we constructed an intermediate region $\tilde\Omega_n$ we define $\tilde f_n:\d\tilde \Omega\to \bR$ using (\ref{defn}). Next, we introduce $f_n$ by  (\ref{defn}).
%\textcolor{red}{The projection is implicit in this case, not sure if this is trivial as it seems}.
\begin{proposition}\label{c0}
(1) If $f_n$ is defined above, then $\omega_f$, the continuity modulus of $f$, is also the continuity modulus of $f_n$. \\
%Moreover, if  $\pi:\bR^2\to \bar\Omega$ is the orthogonal projection onto a closed convex set $\bar\Omega$, then 
%$\pi^{-1} (\d\Omega) \cap \partial \tilde \Omega_l$ is uniquely defined and
%$f_n \circ (\pi_n)^{-1}$ %(\d\Omega) \cap \partial \tilde \Omega_l) $ 
%converges uniformly to $f$.
(2) Let us set $\pi_n = \pi|_{\p\Omega_n}$. If we take $x\in \p\Omega$ and $y_n \in \pi_n^{-1}(x)$, then $y_n \to x$ as $n\to+\infty$.
\end{proposition}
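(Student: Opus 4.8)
The plan is to deduce both parts from the single standard fact that the metric projection $\pi$ onto the closed convex set $\bar\Omega$ is nonexpansive, i.e. $|\pi(w) - \pi(w')| \le |w - w'|$ for all $w, w' \in \bR^2$ (see \cite{brezis}), together with the Hausdorff convergence $\bar\Omega_n \to \bar\Omega$ established in Lemma \ref{prap}. A preliminary remark is that for $y \in \d\Omega_n$ the point $\pi(y)$ indeed lies on $\d\Omega$: since $\Omega \subseteq \Omega_n$, interior points of $\Omega$ are interior to $\Omega_n$, so $\d\Omega_n \cap \Omega = \emptyset$ and every $y \in \d\Omega_n$ lies outside or on $\bar\Omega$; the projection of such a point lands on $\d\Omega$. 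Hence $f_n = f\circ\pi$ is well defined and continuous on $\d\Omega_n$.

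For part (1), I would first recall that the modulus of continuity $\omega_f(t) = \sup\{|f(x)-f(x')| : x,x'\in\d\Omega,\ |x-x'|\le t\}$ is nondecreasing in $t$. Then for any $y, y' \in \d\Omega_n$ I estimate
\begin{equation*}
|f_n(y) - f_n(y')| = |f(\pi(y)) - f(\pi(y'))| \le \omega_f(|\pi(y)-\pi(y')|) \le \omega_f(|y - y'|),
\end{equation*}
where the last inequality combines the nonexpansiveness of $\pi$ with the monotonicity of $\omega_f$. Taking the supremum over pairs with $|y-y'|\le t$ yields $\omega_{f_n}(t) \le \omega_f(t)$, so $\omega_f$ is a common modulus of continuity for every $f_n$; this uniformity in $n$ is precisely what is needed for the subsequent passage to the limit.

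For part (2), the key observation is that, because $x = \pi(y_n)$ is by definition the nearest point of $\bar\Omega$ to $y_n$, one has the identity $|y_n - x| = |y_n - \pi(y_n)| = \dist(y_n, \bar\Omega)$. Since $y_n \in \bar\Omega_n$, the right-hand side is at most $\sup_{z\in\bar\Omega_n}\dist(z,\bar\Omega)$, which is in turn bounded by the Hausdorff distance between $\bar\Omega_n$ and $\bar\Omega$. By Lemma \ref{prap} this Hausdorff distance tends to zero, so $|y_n - x| \to 0$, i.e. $y_n \to x$.

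I do not expect a genuine obstacle: the entire content reduces to the nonexpansiveness of the projection and the Hausdorff convergence of the domains. The only points deserving minor care are the well-definedness of $\pi(y)\in\d\Omega$ noted above and fixing the convention that $\omega_f$ is measured in the Euclidean distance, in which $\pi$ is nonexpansive. Were one instead to use the arc-length distance along $\d\Omega$, the projection need not be nonexpansive, and the clean estimate in part (1) would require an additional comparison between the two metrics.
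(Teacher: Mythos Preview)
Your proof is correct and follows essentially the same route as the paper: both parts are derived from the nonexpansiveness of the metric projection onto $\bar\Omega$ together with the Hausdorff convergence $\bar\Omega_n\to\bar\Omega$ from Lemma~\ref{prap}. Your write-up is in fact more detailed than the paper's, which simply cites \cite[Proposition~5.3]{brezis} for part~(1) and invokes the Hausdorff convergence directly for part~(2).
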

\begin{proof}
(1) The argument is based on the observation that if $x_1, x_2\in \d\Omega_n$, then 
 $|\pi x_1 - \pi x_2 | \le  |x_1 -  x_2 |$, see \cite[Proposition 5.3]{brezis}. This implies our claim.
 
(2) %\sout{The mapping $\pi_n$ need not be one-to-one, e.g. preimages of vertices of $\d\Omega$ are not singletons}({\color{red}Why are we saying the previous sentence here and the way we defined things the projection might be injective)}. 
For a given $\epsilon>0$, since $\bar\Omega_n$ converges to $\bar\Omega$ in the Hausdorff metric, then we deduce that any $y_n\in \pi^{-1}_n(x)$ must be at a distance from $\Omega$ smaller  than $\epsilon$, i.e. $|y_n - x|<\epsilon$. 
%belongs to a ball with radius $\epsilon$ for sufficiently large $n$ but also, due to $\pi_n(y_n) =x$, the distance between $y_n$ and $x$ is no bigger than $\epsilon$.  \AS{PLEASE CHECK HERE, the above sentence does not make sens to me. Why non injectivity is mentioned?}
%\textcolor{red}{The statement of part $2$ is not correct $x$ should be in $\partial \Omega$.}
\end{proof} 

%\textcolor{red}{The sentence: ``$ \pi_n := \pi|_{\d \Omega_n}$ is one-to-one'' is false and $f_n \circ (\pi_n)^{-1}\equiv f$ so the sentence: ``The uniform convergence easily follows from the definition of $f_n$.'' does not mean anything.}

Now, we assume that $f$ satisfies admissibility condition %(C1) or 
(C2) and $f$ has finitely many humps. We have to check that %if $f$ satisfies the admissibility condition (C2) 
the distances, appearing in (\ref{A}), are well approximated by $f_n$, in the sense explained below. Let us assume  that the number of sides of $\d\Omega$ is finite, $\Omega_n$ are constructed in 
Lemma \ref{prap} and $f_n$ are defined above in (\ref{defn}). We assume that $I_i= [a_i, b_i]$ is a hump contained in  $\ell$. We denote the orthogonal projection onto the line containing $\ell$ by $\pi_\ell$. We set,
$$
\alpha^n_i := \pi_\ell^{-1}(a_i) \cap \d\Omega_n,\qquad
\beta^n_i := \pi_\ell^{-1}(b_i) \cap \d\Omega_n.
$$
By the properties of $\pi$, we have $\alpha^n_i = \pi^{-1}(a_i)\cap \d\Omega_n$, $\beta^n_i := \pi^{-1}(b_i) \cap \d\Omega_n.$

We also denote by $y_i$, $z_i$ the points of $\d\Omega \setminus \ell$ defined in  (\ref{defyz}). 
%We have three possibilities for  $y_i$ and $z_i$: (i) both points belong to $\bigcup_{k\in\cI}\ell_k$, (ii)
%one of these points belongs to $\bigcup_{k\in\cI}\ell_k$ while the
%other one is in $\d\Omega\setminus\bigcup_{k\in\cI}\ell_k$  and the
%last one is (iii) both points belong to
%$\d\Omega\setminus\bigcup_{k\in\cI}\ell_k$. It is sufficient to
%consider (i) and (iii), because they cover (ii) too. 
%In case (iii), since $\bar\Omega_n$ converges to $\bar\Omega$ in the Hausdorff distance, we conclude that 
%$$
%\lim_{n\to\infty} \dist(\alpha^n_i,y_i) + \dist(\beta^n_i,z_i)= \dist(a_i, y_i) + \dist(b_i, z_i) <|a_i -b_i|.
%$$
%As a result, the strict inequality holds for sufficiently large $n$.
Assume
%In case (i), we conclude that 
$y_i \in \ell'$ and $z_i \in \ell''$.  We consider the orthogonal projection $\pi_{\ell'}$, (resp. $\pi_{\ell''}$), onto  $\ell'$,  (resp. $\pi_{\ell''}$). We take
$$
\zeta^n_i := \pi_{\ell'}^{-1}(y_i) \cap \d\Omega_n,\qquad
\psi^n_i := \pi_{\ell''}^{-1}(z_i) \cap \d\Omega_n.
$$ 
Since, the number of humps is finite and
$$
\lim_{n\to\infty} \dist(\alpha^n_i,\zeta^n_i) + \dist(\beta^n_i,\psi^n_i )
= \dist(a_i, y_i) + \dist(b_i, z_i) <|a_i -b_i|
$$
%\textcolor{red}{But we are not destroying $\Omega$ away from the sides? so conditions ii and iii are avoided by our construction of $\Omega_n$}
we conclude that 
\begin{equation}\label{rn-ap}
\dist(\alpha^n_i,\zeta^n_i) + \dist(\beta^n_i,\psi^n_i )  <|a_i -b_i|
\end{equation}
%As a result, the strict inequality holds 
for sufficiently large $n$. Thus,
%Since the  number of sides is finite,  
we have shown:

\begin{cor}\label{c1} Let us suppose that $\Omega$ is open, bounded and convex and the number of sides of $\d\Omega$ is finite. We assume that
the function  $f\in C(\d\Omega)$ satisfies the admissibility 
condition (C2), and it has a finite number of humps. Then, for sufficiently large $n$ (\ref{rn-ap}) holds for all 
$i=1, \ldots, K$, where $\Omega_n$ is constructed in Lemma \ref{prap} and $f_n$ is given by \eqref{defn}. \qed
\end{cor}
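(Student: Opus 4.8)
The plan is to read (\ref{rn-ap}) as a finite-$n$ approximation of the strict inequality (\ref{A}) furnished by condition (C2), and to obtain it by passing to the limit $n\to\infty$ in the left-hand side, using the convergence of projected points from Proposition \ref{c0}(2) and then invoking finiteness of the set of humps to make the threshold uniform. In fact the displayed computation immediately preceding the statement already carries out the heart of the argument, so what remains is to justify each convergence carefully and to explain why a single $n$ works for all humps at once.

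First I would fix a single hump $I_i=[a_i,b_i]\subset\ell$ together with a choice of the associated points $y_i\in\ell'$, $z_i\in\ell''$ from (\ref{defyz}), recalling that these lie in $\d\Omega\setminus\ell$. Since humps avoid the endpoints of $\ell$ by Definition \ref{dehu}, the points $a_i,b_i$ are interior to the side $\ell$, so near each of them the orthogonal projection $\pi$ onto $\bar\Omega$ coincides with the orthogonal projection $\pi_\ell$ onto the line carrying $\ell$; this is exactly the identification $\alpha^n_i=\pi^{-1}(a_i)\cap\d\Omega_n$, $\beta^n_i=\pi^{-1}(b_i)\cap\d\Omega_n$ already recorded in the text, and similarly $\zeta^n_i\in\pi_n^{-1}(y_i)$, $\psi^n_i\in\pi_n^{-1}(z_i)$ after projecting onto the lines carrying $\ell'$ and $\ell''$.

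Once the fibers are identified this way, Proposition \ref{c0}(2) applies verbatim to each of $a_i,b_i,y_i,z_i\in\d\Omega$, giving $\alpha^n_i\to a_i$, $\beta^n_i\to b_i$, $\zeta^n_i\to y_i$ and $\psi^n_i\to z_i$ as $n\to\infty$. By continuity of the Euclidean distance this yields
$$
\dist(\alpha^n_i,\zeta^n_i)+\dist(\beta^n_i,\psi^n_i)\longrightarrow \dist(a_i,y_i)+\dist(b_i,z_i),
$$
and by (\ref{A}) the limit on the right is strictly smaller than $|a_i-b_i|$. Hence there is an index $N_i$ such that (\ref{rn-ap}) holds for this hump whenever $n\ge N_i$. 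Because $\Omega$ has finitely many sides and $f$ has finitely many humps, the index set is finite, so setting $N=\max_i N_i$ gives one threshold beyond which (\ref{rn-ap}) holds simultaneously for all $i=1,\dots,K$.

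I do not expect a genuine obstacle here, since the claim is a limit-plus-finiteness argument; the only points requiring attention are that the fiber identification $\alpha^n_i\in\pi_n^{-1}(a_i)$ relies on $a_i,b_i$ being interior to $\ell$, and that $y_i,z_i$ need not be unique. I would handle the latter by running the argument for the fixed realization of $y_i,z_i$ used to define $\zeta^n_i,\psi^n_i$, and I would note that even if $y_i$ or $z_i$ happens to be a vertex of $\d\Omega$, the Hausdorff convergence $\bar\Omega_n\to\bar\Omega$ still forces the corresponding fiber to converge to that vertex, so Proposition \ref{c0}(2) remains applicable and the strictness carried by (\ref{A}) is preserved in the limit.
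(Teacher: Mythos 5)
Your proposal is correct and follows essentially the same route as the paper: the paper's own justification is precisely the displayed limit computation preceding the corollary, where $\alpha^n_i,\beta^n_i,\zeta^n_i,\psi^n_i$ converge to $a_i,b_i,y_i,z_i$, the strict inequality (\ref{A}) passes to the limit, and finiteness of the set of humps yields a single threshold $n$. Your additional care about the fiber identification near interior points of $\ell$ and about the non-uniqueness of $y_i,z_i$ merely makes explicit what the paper leaves implicit via Proposition \ref{c0}.
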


We state here a lemma saying that $v_n$, a solution to the least gradient problem on $\Omega_n$, with data $f_n$, has the  level sets predicted by the positions on $y_i$'s (resp. $z_i$'s) corresponding to $a_i$'s (resp. $b_i$'s).

\begin{lemma}\label{key-l}
Let us assume that $f$ satisfies the admissibility conditions (C1) or (C2) as well as OPC and DCC. We assume that  $[a,b]\subset \ell$ is a hump and $\ell$ is a side of $\d\Omega$ and that points $\alpha^n, \beta^n, \zeta^n$ and $\psi^n$ are defined above. Then, for sufficiently large $n$:\\  (a)
the quadrilateral $Q_n=\conv(\alpha^n, \beta^n, \zeta^n, \psi^n)$ is contained in $E^n_e=\{ v_n \ge e\}$, where $e = f([a,b])$.\\
(b) If in addition, points $y,$ $z$ are uniquely defined by (\ref{defyz}),  then the intervals $[\alpha^n,\zeta^n]$ and $[\beta^n,\psi^n]$ are subsets of $\partial E^n_e$.
\end{lemma}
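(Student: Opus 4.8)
The plan is to use the classical structure theory of least gradient functions on the strictly convex domains $\Omega_n$. Recall that by the results of Sternberg--Williams--Ziemer, since $\Omega_n$ is strictly convex and $f_n$ is continuous, the solution $v_n$ exists, is continuous, and its superlevel sets $E^n_e = \{v_n \ge e\}$ are sets of least perimeter in $\Omega_n$. The key geometric fact in the plane is that the boundary of a least-perimeter set inside $\Omega_n$ consists of straight line segments meeting $\partial\Omega_n$, and distinct boundaries of superlevel sets cannot cross. Thus $\partial E^n_e \cap \Omega_n$ is a union of chords. The trace condition forces the endpoints of these chords on $\partial\Omega_n$ to lie exactly at the jump points of the function $t\mapsto \mathbf{1}_{\{f_n \ge e\}}$, i.e. where $f_n$ transitions across the level $e$.

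First I would identify the relevant transition points. By the definition $f_n = f\circ\pi$ and the construction of $\alpha^n,\beta^n,\zeta^n,\psi^n$ as the $\pi$-preimages on $\partial\Omega_n$ of $a,b,y,z$, these four points are precisely the locations on $\partial\Omega_n$ where $f_n$ crosses the value $e=f([a,b])$ near the hump. The DCC (specifically \eqref{dcc1} and the monotonicity near $y,z$ in \eqref{dcc2}) guarantees that, in a neighborhood, $f_n \ge e$ on the boundary arc carrying the hump side (between $\alpha^n$ and $\beta^n$ through the hump) and on the two short arcs from $\alpha^n$ to $\zeta^n$ and from $\beta^n$ to $\psi^n$, while $f_n$ dips below $e$ immediately past $\zeta^n$ and $\psi^n$ along $\overline{yz}_{ab}$. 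Hence the trace of $\mathbf{1}_{E^n_e}$ on $\partial\Omega_n$ is $1$ exactly on the arc $\alpha^n\beta^n$ (through the hump) together with the two connecting arcs up to $\zeta^n$ and $\psi^n$. I would then invoke Corollary \ref{c1}: inequality \eqref{rn-ap} shows that for large $n$ the chords $[\alpha^n,\zeta^n]$ and $[\beta^n,\psi^n]$ are short enough not to intersect each other (using also OPC to rule out interference from other humps), so the least-perimeter competitor for $E^n_e$ is genuinely the region bounded by these two chords and the short boundary arcs.

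For part (a), I would argue that the quadrilateral $Q_n=\conv(\alpha^n,\beta^n,\zeta^n,\psi^n)$ lies inside $E^n_e$ by comparing perimeters: replacing the part of $\partial E^n_e$ by the two chords $[\alpha^n,\zeta^n]$ and $[\beta^n,\psi^n]$ gives the minimal-perimeter enclosure of the correct boundary trace, and since the chords are the straight segments joining the forced endpoints, $Q_n$ (whose two non-boundary sides are exactly these chords) is contained in the superlevel set. Monotonicity of $v_n$ along the boundary data, guaranteed by (C1)/(C2) plus DCC, ensures $v_n \ge e$ throughout $Q_n$. For part (b), when $y,z$ are uniquely determined, the two chords are uniquely determined as well, and the minimality of $\partial E^n_e$ together with the non-crossing property forces $\partial E^n_e$ to coincide with these chords near the hump; I would then conclude $[\alpha^n,\zeta^n],[\beta^n,\psi^n]\subset\partial E^n_e$.

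The main obstacle I anticipate is justifying rigorously that the boundary trace of $\mathbf 1_{E^n_e}$ is exactly the two arcs I described, rather than something with extra components coming from distant parts of $\partial\Omega$ where $f$ again crosses level $e$. This is where OPC and DCC must be used carefully: OPC prevents the segments associated to different humps from intersecting, and \eqref{dcc1} guarantees that along $\overline{yz}_{ab}$ the data stays $\ge e$, so no spurious crossing of level $e$ occurs on the ``far'' arc that would split off an unwanted piece of the superlevel set. Translating these boundary-data conditions into statements about the chord structure of the least-perimeter set $E^n_e$, and controlling this uniformly for large $n$ via the stability estimate \eqref{rn-ap}, is the delicate step; the rest is the standard chord geometry of planar least gradient functions.
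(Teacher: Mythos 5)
There is a genuine error at the core of your argument: you have inverted the content of the DCC and of the definition of $y$, $z$. By \eqref{defyz}, $y$ and $z$ are the \emph{nearest} points of $f^{-1}(e)$ outside the hump; since $f$ attains a local maximum equal to $e$ on the maximal interval $[a,b]$, the datum dips \emph{below} $e$ on the two near arcs joining $a$ to $y$ and $b$ to $z$, while \eqref{dcc1} states precisely that $f\ge e=f([a,b])$ on the \emph{far} arc $\overline{yz}_{ab}$ (the one not containing the hump). Your proposal asserts the opposite: that $f_n\ge e$ on the two short arcs from $\alpha^n$ to $\zeta^n$ and from $\beta^n$ to $\psi^n$, and that $f_n$ dips below $e$ along $\overline{yz}_{ab}$. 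This is not a harmless slip. With your trace picture, the set where the trace of the indicator of $E^n_e$ equals $1$ would be a single boundary arc (from $\zeta^n$ to $\psi^n$ through the hump), so the least-perimeter structure you invoke would force $\partial E^n_e\cap\Omega_n$ to consist of the \emph{single} chord $[\zeta^n,\psi^n]$ near the hump --- which contradicts part (b) of the very lemma you are proving. It is the correct orientation (trace $1$ on the hump arc \emph{and} on the far arc, $0$ on the two near arcs) that produces the two interfaces $[\alpha^n,\zeta^n]$ and $[\beta^n,\psi^n]$, and hence encloses $Q_n$ between them.

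Two further points would still need repair even after fixing the orientation. First, you work directly at the level $e$, where $f_n$ takes the value $e$ on entire arcs (the hump, and possibly portions of $\overline{yz}_{ab}$), so ``the jump points of the indicator of $\{f_n\ge e\}$'' are not well defined there; the paper instead considers a.e.\ $\tau\in(\sigma,e)$, uses the strict monotonicity near $a,b,y,z$ guaranteed by \eqref{dcc2} to obtain unique crossing points $\xi^n_j$, and only then passes to the limit $\tau\to e$. Second, non-crossing of level lines and \eqref{rn-ap} alone do not decide how the four endpoints pair up into chords: one must rule out the competitor pairing $[\xi^n_a,\xi^n_b]\cup[\xi^n_y,\xi^n_z]$, which is done in the paper by the explicit perimeter inequality \eqref{siedem}, derived from the gap $\delta$ in \eqref{dede} coming from condition (C2). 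Your sketch gestures at a perimeter comparison but never formulates this inequality, and with the inverted trace picture the comparison cannot even be set up.
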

\begin{proof} 
Our reasoning  is based on the DCC.
We present the argument for the case of $f$ attaining a local maximum at $[a,b]$. The other case, when a minimum occurs, can be treated similarly. We choose $y$ and $z$ so that (\ref{dcc1}) holds.

We define $\delta$ to be
\begin{equation}\label{dede}
    \delta = |a-b| - |a-y| - |b-z|>0.
\end{equation}
We also take $\epsilon>0$, as in the DCC, and if necessary we restrict it further, so that
\begin{equation}\label{cztery}
U_y:= \mathcal{N}(y, \epsilon)\subset\ell_y,\quad U_z:= \mathcal{N}(z, \epsilon)\subset\ell_z,\quad
U_a:= B(a, \epsilon)\cap \d\Omega \subset\ell,\quad U_b:=B(b, \epsilon)\cap \d\Omega \subset\ell,
\end{equation}
where $\ell_y$ and $\ell_z$ are sides containing $y$ and $z$, respectively. Possibly, $y$ or $z$ is an endpoint of $\ell_y$ or $\ell_z$. For  this $\epsilon$ we set,
$$
\sigma:= \max\{ \min \{f(x):\ x\in U_i\} \ i =a,b,y,z\}.
$$
From \eqref{defn}, we have
$$
f_n(\alpha_n) = f(\pi \alpha_n) = f(a) = e=
f(b) = f(\pi \beta_n) = f_n(\beta_n)
$$
and similarly
$$
f_n( \zeta_n) = f(y) = e= f(z) = f_n(\psi_n).
$$

We take any $\tau\in (\sigma, e)$. We will investigate $E^n_\tau$ for all $\tau\in (\sigma,e)$. Before we do so, we make a number of observations. For an arbitrary $\tau\in(\sigma,e)$, we find
$$
x^\tau_j \in U_j,\quad j=a,b,y,z
$$
such that $f(x^\tau_j) =\tau$. The Data Consistency Condition implies the strict monotonicity of $f$ restricted to each of $U_j$, hence the choice of $x^\tau_j$, $j=a,b,y,z$, is unique. Moreover, for the same reason
\begin{equation}\label{szesc}
\lim_{\tau \to e} x^\tau_j = j,\qquad j=a,b,y,z.
\end{equation}
For  a fixed $\tau<e$  there exist
$$
\xi^n_a \in \pi_n^{-1}(U_a), \quad \xi^n_b \in \pi_n^{-1}(U_b),\quad 
\xi^n_y \in \pi_n^{-1}(U_y), \quad \xi^n_z \in \pi_n^{-1}(U_z)\subset \d\Omega_n.
$$
such that $f_n(\xi^n_i) = \tau$. In principle, $\pi$ is not injective, but it is easy to see that $\pi_n = \pi |_{\d\Omega_n}$ is. Hence,
%Because of (\ref{cztery}) and the properties of $\pi_n$ 
the choice of $\xi^n_j$, $j=a,b,y,z$ is unique, when $y$ and $z$ are fixed. 

By the definition of $f_n$ we have, see (\ref{dcc1}),
\begin{equation}\label{piec}
\inf_{x\in\overline{\zeta_n,\psi_n}_{\alpha_n\beta_n}} f_n(x) =
\inf_{\xi\in\overline{yz}_{ab}%\alpha_n\beta_n}
} f(\xi) 
\ge f([a,b]) =
f_n(\overline{\alpha_n\beta_n})=e >\tau,
\end{equation}
where $\overline{\alpha_n\beta_n}$ denotes $\pi_n^{-1}([a,b])$. We recall that $\overline{\zeta_n,\psi_n}_{\alpha_n\beta_n}$ is the arc connecting  $\zeta_n$ with $\psi_n$ and not containing $\overline{\alpha_n\beta_n}$.

Let us denote by 
\begin{equation}\label{dfHb}
H^b([\psi^n,\zeta^n])  \hbox{ the closed half-plane whose boundary contains} [\psi^n,\zeta^n] \hbox{ but } \alpha^n,  \beta^n\not\in H^b([\psi^n,\zeta^n]).
\end{equation}
We note that any point $x'\in \Omega_n \cap H^b([\psi^n,\zeta^n])$ must be in $E^n_e$. Indeed, if  $x'\in \Omega_n \cap H^b([\psi^n,\zeta^n])$ and $v_n(x')=s' <e$, then we may possibly choose another point $x_0$ in $H^b([\psi^n,\zeta^n])$, such that $x_0\in \d E_s$, where $s<e$. Thus, $\d E_s$ must  intersect $H^b([\psi^n,\zeta^n])\cap \d \Omega_n$, but this is impossible due to (\ref{piec}). A similar argument shows that the set $H^b([\alpha^n,\beta^n]) \cap \d \Omega_n\subset E^n_e$, where $H^b([\alpha^n,\beta^n])$ is the closed half-plane whose boundary contains $[\alpha^n,\beta^n]$ and  $\psi^n$, $\zeta^n\not\in H^b([\alpha^n,\beta^n])$. This proves part (a).

We can show part (b).
Due to \cite[Lemma 3.3]{sternberg} almost all $\tau\in (\sigma,e)$ are such that $\d E^n_\tau$ intersects all sets $\pi^{-1}_n(U_i)$, $i=a,b,y,z$. We claim that $[\xi^n_a, \xi^n_y]$ and $[\xi^n_b, \xi^n_z]$ are subsets of $\d E^n_\tau$. Firstly, we claim that if $[\xi^n_a,p]$ is a connected component of  $\d E^n_\tau$, then $p=\xi^n_y$. Indeed, this follows from (\ref{szesc}) combined with $\xi^n_a\to x^\tau_a$ as $n\to \infty$, the strict monotonicity of $f_n$ on $U^n_a$ and (\ref{defyz}). The same argument works for $[\xi^n_b, \xi^n_z]$. Moreover, %the choice $[\xi^n_y, p]$ and $p \neq\xi^n_a$ is even worse.\AS{what do you mean worse, can we say more delicate...?}
we  argue that 
\begin{equation}\label{siedem}
|\xi^n_a - \xi^n_b| + |\xi^n_y - \xi^n_z| > |\xi^n_a - \xi^n_y| + |\xi^n_b - \xi^n_z|.
\end{equation}
Indeed,  for our choice of $\delta$, we can take $\tau$ such that
$$
|x^\tau_a - x^\tau_y | \le |a - y| + \frac \delta5,\qquad 
|x^\tau_b - x^\tau_z | \le |b - z| + \frac \delta5.
$$
Similarly, for sufficiently large $n$, we have
$$
|\xi^n_a - \xi^n_y| \le |x^\tau_a - x^\tau_y |  + \frac \delta5, \qquad
|\xi^n_b - \xi^n_z|  \le |x^\tau_b - x^\tau_z | + \frac \delta5.
$$
Combining these estimates and keeping in mind the definition of $\delta$, see (\ref{dede}), gives
$$
 |\xi^n_a - \xi^n_y| + |\xi^n_b - \xi^n_z| \le |a - y| + |b-z| + \frac 45\delta < |a-b|.
$$
Moreover,  due to the properties of $\pi_n$, (see \cite[Proposition 5.4]{brezis}), we have
$$
|a -b| \le |\alpha^n - \beta^n| \le |\xi^n_a - \xi^n_b| <  |\xi^n_a - \xi^n_b| + |\xi^n_y - \xi^n_z|,
$$
as desired.

We have just proved that $[\xi^n_a , \xi^n_y] \cup [\xi^n_b , \xi^n_z]\subset \d E^n_\tau$ for a.e. $\tau\in(\sigma, e)$. Hence, for all $x_0\in Q_n$, we have $f(x_0)\ge \tau$ for a.e. $\tau\in (\sigma, e)$. As a result $f(x_0)\ge e$. 
\end{proof}

\bigskip
Our construction of solutions will be performed in a few steps. Our standing assumption is %while keeping in mind 
that OPC and DCC always hold here. In this section we treat the case of $f$ having a finite number of humps and sets $\Omega$ with finitely many sides.  In this situation we can estimate the modulus of continuity of solutions to the approximate solutions on $\Omega$. This is done in the Lemma below. %{\color{red} The previous paragraph seems to me to be misplacesd what is meant by $\Omega_n$ with finitely many sides, it is strictly convex}

%\textcolor{red}{This should be a theorem}
\begin{lemma}\label{le2} Let us suppose $\Omega$ is a bounded region and $\d\Omega$ has a  finite number of sides. We assume that   $f\in C(\partial \Omega)$ has a modulus of continuity $\omega_f$ and that it has  finitely many humps. We assume
that sets $\Omega_n$ are given by Lemma \ref{prap} and $f_n\in C(\d\Omega_n)$ is as
 in Definition \ref{def:fn}.   Then, there exists $v_n$, a unique solution to the
  least gradient on $\Omega_n$ with data $f_n$. Moreover,  each $v_n$ is  continuous with
the  modulus of continuity $\omega_{v_n}$, such that there exist $A, B>0$ independent of $n$, with the properties
\begin{equation}\label{teaz-l}
\omega_{v_n}(r) \le K\omega_f\left( r B + A {\sqrt r} \right)=:\tilde\omega(r),
\end{equation}
where $K$ is the number of sides of $\d\Omega$, 
$$
A = \frac{\sqrt{\diam \Omega}}{\sqrt{\min \sin \gamma}},
$$
the minimum is taken over all angles $\gamma$'s formed by neighboring sides,
$$
B = \dfrac{1}{ 
\min\{ \min_{\ell_1 \parallel \ell_2}\sin \alpha, 
\min_{\ell_1 \not\parallel \ell_2}\sin \beta\}},
$$
the angles $\alpha$'s are defined in (\ref{d-tana}), while angles $\beta$'s are defined in (\ref{d-sinbeta}). %The meaning of minimum is explained at (\ref{d-sinbeta}).}
\end{lemma}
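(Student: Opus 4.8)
The plan is to combine the classical existence theory with a purely geometric analysis of the level lines of $v_n$. First, since each $\Omega_n$ produced by Lemma \ref{prap} is strictly convex, its boundary has positive curvature and is nowhere locally area-minimizing, so the hypotheses of Sternberg--Williams--Ziemer \cite{sternberg} are met; this yields a unique solution $v_n$ to the least gradient problem on $\Omega_n$ with trace $f_n$, continuous on $\bar\Omega_n$. I would then record the structural facts that drive the estimate: every superlevel set $E^n_t=\{v_n\ge t\}$ is convex, its free boundary $\d E^n_t\cap\Omega_n$ is a single chord joining two boundary points $P_t,Q_t\in\d\Omega_n$ with $f_n(P_t)=f_n(Q_t)=t$, and for $t\neq t'$ these chords do not cross in $\Omega_n$, since $E^n_{t'}\subset E^n_t$ when $t<t'$. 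In particular $v_n(x)$ is read off from the chord through $x$.

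The second step reduces the modulus estimate to a statement about chords. Fix $x_1,x_2\in\Omega_n$ with $|x_1-x_2|=r$ and put $t_i=v_n(x_i)$, say $t_1\le t_2$; let $C_1,C_2$ be the corresponding non-crossing chords through $x_1,x_2$. Since the chords come within $r$ of one another near $x_1\approx x_2$, at least one pair of corresponding endpoints, say $P_1\in C_1$ and $P_2\in C_2$ lying on the side toward which the chords converge, must be close, and
\[
|v_n(x_1)-v_n(x_2)|=|f_n(P_1)-f_n(P_2)|.
\]
Writing $f_n(P_1)-f_n(P_2)$ as a telescoping sum over the at most $K$ sides of $\d\Omega_n$ separating $P_1$ from $P_2$, and using that on each side $f_n$ inherits the modulus $\omega_f$ of $f$ by Proposition \ref{c0}, this is bounded by $K\,\omega_f(|P_1-P_2|)$. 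Everything then hinges on bounding $|P_1-P_2|$.

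The geometric heart, which I expect to be the main obstacle, is the inequality $|P_1-P_2|\le Br+A\sqrt r$ with $A,B$ independent of $n$. I would split into two regimes according to the angle $\psi$ between $C_1$ and $C_2$. In the near-parallel regime the perpendicular distance between the two chords is at most $r$; converting this transverse distance into a displacement along the boundary through the angle the chord makes with the side it meets produces the linear term, the relevant angles being $\alpha$ when the endpoints lie on two parallel sides and $\beta$ when they lie on non-parallel sides, whence the factor $B=1/\min\{\sin\alpha,\sin\beta\}$. In the splayed regime the two lines meet at a point $Z$ just outside $\Omega_n$, necessarily near a vertex $V$ with interior angle $\gamma$; here $\psi$ is of order $r/\dist(Z,x_1)$, and tracking how the chords diverge over a length at most $\diam\Omega$ as they leave the corner yields the square-root term, the corner angle entering through $\sin\gamma$, giving $A=\sqrt{\diam\Omega}/\sqrt{\min\sin\gamma}$.

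The delicate point throughout is uniformity in $n$: near each vertex of $\Omega$ the boundary of $\Omega_n$ rounds off on scale $1/n$, so its curvature blows up, and a naive estimate would degenerate. I would circumvent this by carrying out the chord analysis with the angles $\gamma,\alpha,\beta$ of the limiting polygon $\Omega$ rather than of $\Omega_n$: since $\bar\Omega_n\to\bar\Omega$ in the Hausdorff distance and $\Omega\subset\Omega_n$, any chord of $\Omega_n$ is within $O(1/n)$ of the corresponding chord of $\Omega$, so the fixed polygonal angles control the endpoint displacement uniformly. Collecting the two regimes gives $|P_1-P_2|\le Br+A\sqrt r$, and combined with the reduction of the second step this yields $\omega_{v_n}(r)\le K\,\omega_f(Br+A\sqrt r)=\tilde\omega(r)$, as claimed.
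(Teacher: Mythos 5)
Your first step (existence, uniqueness and continuity of $v_n$ from \cite{sternberg} on the strictly convex $\Omega_n$) is the same as the paper's, and your two-regime geometric heart (linear term from the angles $\alpha,\beta$ that chords make with parallel or non-parallel sides, square-root term from splaying near a vertex with angle $\gamma$) does capture the paper's subcases (I.a), (I.b) and (I.d). But the structural platform you build on is false, and this creates genuine gaps. Once $f$ has humps — which the lemma allows — the superlevel sets $E^n_t$ are in general neither convex nor bounded by a single chord: the paper's Lemma \ref{key-l} shows precisely that for a hump $[a,b]$ the boundary $\partial E^n_\tau$ contains \emph{two} disjoint segments $[\xi^n_a,\xi^n_y]$ and $[\xi^n_b,\xi^n_z]$. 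Moreover fat level sets can occur, so not every $x\in\Omega_n$ lies on any $\partial E^n_t$; the paper needs its Cases II and III to reduce such points to Case I, and your claim that ``$v_n(x)$ is read off from the chord through $x$'' silently skips this. Most importantly, your key assertion that two non-crossing chords passing within $r$ of each other must have a pair of endpoints at distance $\le Br+A\sqrt r$ fails exactly when the two chords meet \emph{different pairs of sides}. That configuration is where the factor $K$ really comes from: the paper (Case I.e) inserts intermediate points on $[x_1,x_2]$ cut out by level sets passing through the vertices separating those sides, estimates each link by the two-regime bound, and uses concavity of $\omega_f$ as in (\ref{wyps}) to obtain $k\,\omega_f(A\sqrt{r/k}+Br/k)\le K\omega_f(A\sqrt r + Br)$. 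Your alternative mechanism — telescoping $f_n$ along the boundary between $P_1$ and $P_2$ — cannot substitute for this: if $P_1,P_2$ are close, no factor $K$ is needed at all (Proposition \ref{c0} already gives $f_n$ the modulus $\omega_f$), while if they are far because the chords end on distant sides, the boundary arc between them is not controlled by $r$, so the telescoped bound says nothing.

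The second gap is that your argument never uses the admissibility condition (C2), the OPC (\ref{OPC}) or the DCC (\ref{dcc1}--\ref{dcc2}), whereas the paper's proof of this very lemma depends on them (they are its standing assumption). They enter in Case (I.c): when a side adjacent to a vertex $V$ carries humps, one must show that $f$ restricted to the two segments forming the triangle near $V$ is monotone — this is proved using Lemma \ref{key-l} (which needs DCC) together with the OPC — so that the configuration reduces to the monotone vertex case (I.d), which is the only place the $\sqrt r$ estimate is actually derived. Without this reduction, your ``splayed regime'' analysis near a vertex is unjustified for non-monotone data: level-set chords generated by humps could approach $V$ in patterns for which the angle bookkeeping breaks down. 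So the proposal, while it identifies the right geometric quantities $A$ and $B$ and the right uniformity-in-$n$ device (working with the angles of the limit polygon, which the paper implements via the contraction property of the projection $\pi$), is missing the chaining argument that produces $K$, the treatment of fat level sets and multi-component level-set boundaries, and the role of the admissibility conditions near vertices.
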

\begin{remark} We stress that $\tilde\omega$
depends on $\omega_f$, $\Omega$ and on the number of sides of $\d\Omega$, as long as it is finite, and on the geometry of the data, but it
does not depend on the number of humps.
%{ARE YOU SURE? since we assumed finitely many humps for a reason}. 
However, the presence of  $K$ in front of $\omega_f$ makes the estimate  blow up, in case of infinite number of sides. 
Thus, the case of an infinite number of sides has %or humps have 
to be treated differently.
%We will comment on this after completing the proof.
%Estimate (\ref{wyps}) shows that locally a smaller number suffices.
\end{remark}

{\it Proof.\ } We recall that existence of $v_n$, solutions to (\ref{lg}) for each $\Omega_n$ and continuous $f_n$, follows from \cite[Theorems 3.6 and 3.7]{sternberg}.

In order to estimate $ \omega_{v_n}$, the modulus of continuity of $v_n$,
we consider a number of cases depending on the behavior
of flat pieces near the junction with the rest of $\d\Omega$. In
\cite{grs}, we could in advance guess the structure of the level set of
the solution. This was possible due to a simple structure of $\Omega$ and the data satisfying our admissibility condition (C1).
%\textcolor{red}{But even there we used the structure, the difference is that in grs we had monotonicity}. 
Here, it is much more difficult, so we use for this purpose the fact that the  level set structure of $v_n$ is known. We set $E^n_t=\{ v_n(x)\ge t\}$. We know that $\d E^n_t$ is a union of line segments. In general, we know that fat level sets may occur, so there may be points $x\in \Omega_n$, which do not belong to any $\d E^n_t$. 

We will proceed by considering all possible cases.
%{\color{blue}
%\textbf{Case 1}
%$x_1$ and $x_2$ in $\Omega$ and belongs to the boundary of a superlevel sets of $v_n$ for some $n$, i.e. $x_i\in \partial E_{t_i}^n$, $i=1,2$. Let $\bar x^{t_i}_n,\bar y^{t_i}_n$ be the intersection of $\partial E_{t_i}^n$ with $\partial \Omega$. The existence of $\bar x^{t_i}_n, \bar y^{t_i}_n$ is guaranteed by \cite{sternberg}, and 
%\begin{equation}\label{wew}
%v_n(x_1)-v_n(x_2)=t_1-t_2=f_{n}\(\bar x^{t_1}_n\)-f_{n}\(\bar x^{t_2}_n\) 
%\end{equation}

%{\bf Case Ia.}
%There exist side of $\ell_1,$ $\ell_2$ of $\Omega$ which are parallel, and such that $e_i:=\partial E^n_{t_i}$, $i=1,2$ intersect both of them. Denote $x^{t_1},y^{t_1}\in \partial \Omega\cap e_1$, and $x^{t_2}_n,y^{t_2}_n\in \partial \Omega \cap e_2$ such that $x^{t_i}_n\in \ell_1$, and $y^{t_i}_m\in \ell_2$, See Fig. 1.

%}

{\bf Case I}:\ \ $x_1, x_2\in \Omega_n$ belong to the
boundaries of the superlevel sets, i.e. there are $t_1,$ $t_2$ such
that $x_i\in \d E^n_{t_i}$, $i=1,2$. 
We have to estimate 
\begin{equation}\label{wew}
v_n(x_1) - v_n(x_2) = t_1 - t_2 = f_n(\bar x_n^{t_1}) - f_n(\bar x_n^{t_2}), 
\end{equation}
for properly chosen points $\bar x_n^{t_i} \in \d\Omega_n\cap \d E^n_{t_i}$, $i=1,2$, 
in terms of the continuity modulus of $f$. Existence of $\bar x_n^{t_i}$ is guaranteed by \cite{sternberg}.

We have to estimate the distance between the points in $\d\Omega_n \cap \d E^n_{t_1}$ to the point in the
intersection of $\d\Omega_n$ and $\d E^n_{t_2}$. 
%\textcolor{red}{The distance is $0$, needs clarification.}
We will consider a number of subcases. Here is the first one:

(I.a) There
exist sides of $\Omega$, $\ell_1$, $\ell_2$, which are parallel and  such that $\d E^n_{t_i}$, $i=1,2$, intersect both of them. %We notice that an easier than stated above is to estimate the distance between the intersection points of $\ell_j$, $j=1,2$ with $E^k_{t_i}$, $i=1,2$. In the computations below  $\ell_j$, $j=1,2$ will be two line segments, not necessary the sides. In the next step we show how this piece of informatrion will be used to find the desired estimate.
We will use the following shorthands, $\d E^n_{t_i} =: e_i$, $i=1,2$, see also Fig. 1. The following argument is valid for both admissibility conditions (C1) and (C2).
%\textcolor{red}{The figure is wrong because $l_1$, $l_2$ are in $\partial \Omega$ and $\bar x^{t_i}$ are in $\partial \Omega_n$, it should be $x^{t_i}$ on the picture and I think we should call the points on $\partial \Omega_n$ as $x^{t_i}_n$ since they are the projection on $\partial \Omega_n$.}
\begin{figure}
{\centering
\includegraphics[width=10cm,height=6cm]{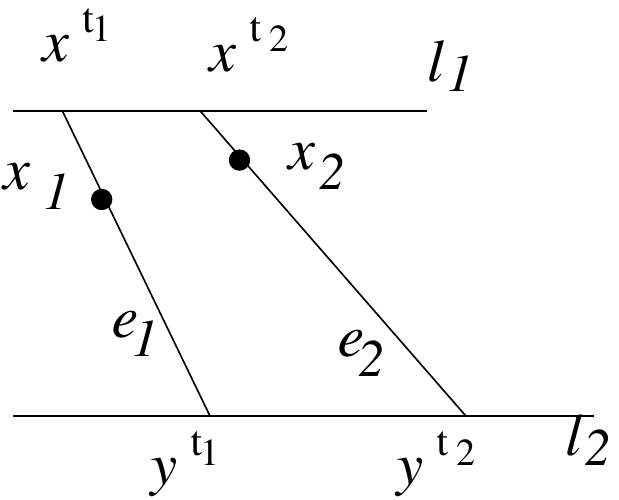}%png}  
  \caption{Case (I.a)}
  }
  \end{figure}

The first observation is obvious,
$$
|x_1 -x_2| \ge\min\{ \dist(x_1, e_2), \dist(x_2, e_1)\} \ge  \dist(e_1, e_2).
$$
Let us write $\{x^{t_1},x^{t_2}\} = \ell_1\cap( e_1 \cup e_2)$ and 
$\{y^{t_1},y^{t_2}\} = \ell_2\cap( e_1 \cup e_2)$.
If $\alpha_i$ is the acute angle formed by $e_i$ and $\ell_1$ or $\ell_2$, $i=1,2$, then 
$$
\dist(e_1, e_2) \ge \min \left\{ 
|y^{t_2} - y^{t_1}|\sin \alpha_2,|x^{t_2} - x^{t_1}|\sin \alpha_1
\right\}.
$$
We may estimate $\alpha_1$, $\alpha_2$ from below by $\alpha$, such that 
\begin{equation}\label{d-tana}
\tan \alpha = \frac{\dist(\ell_1, \ell_2)}{\diam(\pi_2 \ell_1 \cup \ell_2)} \ge
\frac{\dist(\ell_1, \ell_2)}{\diam(\Omega)},
\end{equation}
where $\pi_2$ is the orthogonal projection onto the line containing $\ell_2$.

We continue estimating the right-hand-side (RHS) of
(\ref{wew}). %\textcolor{red}{We cannot chose $\bar x^{t_i}$ they are already defined}. 
If $|x^{t_1} - x^{t_2}| <|y^{t_1} - y^{t_2}|$, then we
choose for $\bar x_n^{t_i}$ the point in $\d\Omega_n\cap \d E_{t_i}$, which is closer to $\ell_1$. Then, 
\begin{equation}\label{p-wacek}
|v_n(x_1) - v_n(x_2)| = |f_n(\bar x_n^{t_1}) - f_n(\bar x_n^{t_2})| = 
|f(\pi_1 \bar x_n^{t_1}) - f(\pi_1 \bar x_n^{t_2})|
\le 
\omega_f(| \pi_1\bar  x^{t_1} - \pi_1\bar  x^{t_2}|),
\end{equation}
%\textcolor{red}{OK agree.}
where $\pi_1$ is the orthogonal projection onto the line containing $\ell_1$. We also use here the definition of $f_n$. 

We need to compare $| \pi_1\bar x_n^{t_1} - \pi_1\bar x_n^{t_2}|$ and $| x^{t_1} -  x^{t_2}|$. Let us denote by $\pi^n_1$ the orthogonal projection onto the line $L^n_1$ parallel to $\ell_1$ and passing through $\bar  x_n^{t_1}$. Then, we obviously have,
$$
| \pi_1\bar x_n^{t_1} - \pi_1\bar x_n^{t_2}| = | \bar x_n^{t_1} - \pi^n_1\bar x_n^{t_2}|
\le | \bar x_n^{t_1} - \hat x_n^{t_2}|,
$$
where $\hat x_n^{t_2}$ is the intersection of $e_2$ with line $L_1^n$. 
The last  inequality above follows from %We notice that 
our construction. The same argument yields,
$$
| \bar x_n^{t_1} - \hat x_n^{t_2}| \le  |x^{t_1} - x^{t_2}|.
$$
Finally, we see,
\begin{equation}\label{wacek}
|\pi_1 \bar x_n^{t_1 } - \pi_1\bar  x_n^{t_2}|\le |x^{t_1} - x^{t_2}|\le  |x_1 - x_2|/\sin \alpha. 
\end{equation}
%\textcolor{red}{I don't see why the first inequality is correct. the distance might be larger depending on the intersection of the level sets with $\ell_1$, $\ell_2$.}
Hence,
$$
 |v_n(x_1) - v_n(x_2)| \le \omega_f(| x_1 - x_2|/\sin\alpha).
$$
%We will see that (\ref{wacek}) is what we really need.

If  $|x^{t_1} - x^{t_2}| \ge |y^{t_1} - y^{t_2}|$, we continue in a similar fashion. Namely, we 
choose the points in $\d\Omega_n\cap \d E_{t_i}$, which are closer to $\ell_2$
and we call them $\bar  y_n^{t_i}\in e_i$, $i=1,2$. Using the argument as above, we conclude that
\begin{equation}\label{wacek2}
|\pi_2 \bar y_n^{t_1 } - \pi_2\bar  y_n^{t_2}|\le |y^{t_1} - y^{t_2}|,
\end{equation}
where $\pi_2$ is the orthogonal projection onto the line containing  $\ell_2$. Estimate (\ref{wacek}) is valid for $y$'s in place of $x$'s, thus we reach,
%Hence,
%$$
% |v_n(x_1) - v_n(x_2)| \le \omega_f(| y^{t_1} - y^{t_2}|/\sin\alpha).
%$$
%As a result, we reach
\begin{equation}\label{om_1}
 |v_n(x_1) - v_n(x_2)| \le \omega_f(|x_1 -x_2|/\sin \alpha).
\end{equation}

(I.b) The next subcase is, when $e_1$ and $e_2$ intersect $\ell_1$ and $\ell_2$, which are not parallel and $\ell_1 \cap \ell_2 =\emptyset$, see Fig. 2. We proceed as in subcase (I.a) with slight changes. In particular, the following argument is valid for both admissibility conditions (C1) and (C2).

\begin{figure}
{\centering
\includegraphics[width=10cm,height=8cm]{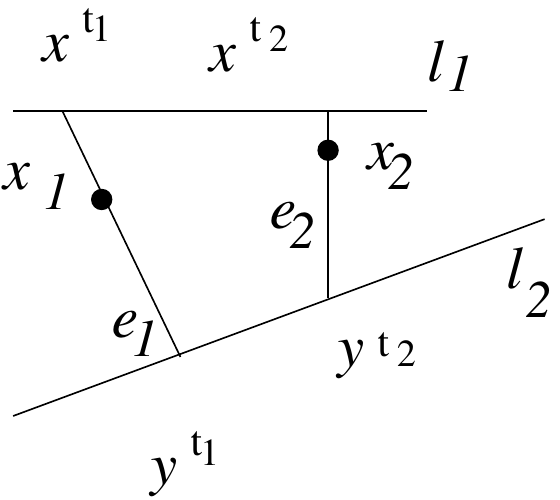}%png}  
  \caption{Case (I.b)}
  }
  \end{figure}
%\begin{figure}[ht!]  \includegraphics[width=122mm]{fig1_fig2.jpg}  \end{figure}

We have to estimate $|x_1-x_2|$ from below. In fact,
$$
|x_1 -x_2| \ge \min\{\dist(x_1, e_2), \dist(x_2, e_1)\}\ge
\min\{\dist(x^{t_1}, L(e_2)), \dist(y^{t_1}, L(e_2))\},
$$
where $L(v)$ is the line containing a (nontrivial) line segment
$v$. We notice that if $\beta_{ij}$ is the angle, which $e_i$ forms with $\ell_j$, then
%\textcolor{red}{same comment on the figure there should be no bar.}
$$
\sin \beta_{11}=\frac{\dist(x^{t_2},e_1)}{|x^{t_2}-x^{t_1}|}, \quad
\sin \beta_{12}=\frac{\dist(y^{t_2},e_1)}{|y^{t_2}-y^{t_1}|}, \quad
\sin \beta_{21}=\frac{\dist(x^{t_1},e_2)}{|x^{t_2}-x^{t_1}|}, \quad
\sin \beta_{22}=\frac{\dist(y^{t_1},e_2)}{|y^{t_2}-y^{t_1}|}.
$$
We want to find an estimate from below on $\beta_{ij}$. We can see that $\beta_{ij} \ge \beta$, $i,j=1,2$, where
\begin{equation}\label{d-sinbeta}
\sin\beta =\min\left\{ 
\frac{\dist(\d \ell_1, \ell_2)}{|\pi_2\ell_1|}, 
\frac{\dist(\d \ell_2, \ell_1)}{|\pi_1\ell_2|}\right\},
\end{equation}
and $\pi_i$ is the orthogonal projection onto the line $L(\ell_i)$, $i=1,2$.

Combining these estimates, we can see that
$$
|x_1-x_2| \ge \min \{ |x^{t_2}-x^{t_1}|, |y^{t_2}-y^{t_1}|\} \sin \beta.
$$
Since %In this way we obtain \textcolor{red}{This is always true.}
$$
v_n(x_1) - v_n(x_2) = t_1 - t_2,
$$
where $t_i = f(\bar x_n^{t_i})$ or $t_i = f(\bar y_n^{t_i})$, $i=1,2$ and $\bar x_n^{t_i}$, $\bar y_n^{t_i}\in \d \Omega_n$, 
$i=1,2$ are defined as in step (I.a), then
arguing as in subcase (I.a), we reach the same conclusion as in (\ref{wacek}) or (\ref{wacek2}).
Hence, 
\begin{equation}\label{om_2}
|v_n(x_1) - v_n(x_2)| \le \omega_f(\min\{|x^{t_2}-x^{t_1}|, |y^{t_2}-y^{t_1}|\}) \le 
\omega_f( |x_2-x_1|/\sin\beta). 
\end{equation}

\bigskip
The analysis becomes more complicated  when $e_1$ and $e_2$ intersect $\ell_1$ and $\ell_2$, which are not parallel and $\ell_1 \cap \ell_2 =\{V\}$, see Fig. 3. In these cases the admissibility conditions (C1) and (C2) come into play. The difficulty arises, when level sets may be arbitrarily close to the vertex $V$. We distinguish two situations:\\
(I.c) $f$ satisfies condition (C2) on  $\ell_1$;\\
(I.d) $f$ satisfies condition (C1) on  $\ell_1$ and on $\ell_2$.
%\textcolor{red}{What is $I_{i_0}$, and is $V$ in $\partial \Omega$ since otherwise this is the same as Case 1.b.}

%\hspace{4cm}
%{\centering
\begin{figure}[h]\label{rys3}
{\centering
\includegraphics[width=10cm,height=6cm]{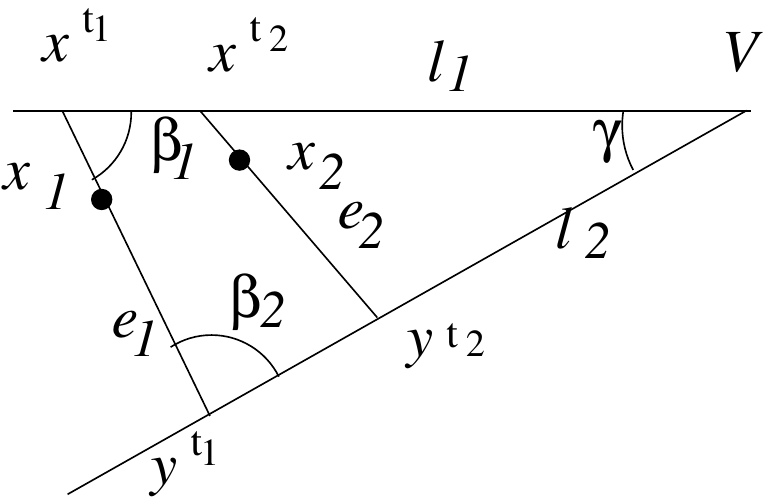}%png}  
  \caption{Cases (I.c) and (I.d)}
  }
  \end{figure}
%}
%\begin{figure}[ht!]  \includegraphics[width=122mm]{fig3.jpg}  \end{figure}

We first consider (I.c). If this occurs, then the admissibility conditions restrict positions of $y^{t_1}$, $y^{t_2}\in \ell_2$, relative to $x^{t_1}$, $x^{t_2}$. Indeed, since we have a finite number of humps, we can find an index $i_o\in \cI$, so that
\begin{eqnarray*}
D&:=&\min\{\dist(a_{i_o},V),\dist(b_{i_o},V)\} = \dist(I_{i_o},V)\\
&=&\min \{\dist(I_j,V):\ I_j\hbox{ is a hump, }I_j\subset \ell_1\}.
\end{eqnarray*}
Of course $D\ge 0$. However, if $D=0$, i.e., the interval $[b_{i_o}, V]$ looks like a hump, then this situation is excluded by Definition \ref{dehu}.
%outside of the scope of  analysis due to  condition (\ref{A}). 
Subsequently, we consider only $D>0$.

%Due to the (C2) condition in \eqref{A}, there are 
We denote by $z_{i_o}, w_{i_o}\in \d\Omega\setminus\ell$ such points that the distances in  (\ref{A}) are attained, i.e.,
$$
\dist(a_{i_o},z_{i_o})+\dist(b_{i_o},w_{i_o}) =
\dist(a_i, f^{-1}(e_i)\cap(\d\Omega\setminus I_i)) + \dist(b_i, f^{-1}(e_i)\cap(\d\Omega\setminus I_i)) < |a_i-b_i|.
$$
%\textcolor{red}{Suddenly we are using $w_{i_0}$.}
(Here, we abandon for a while our  convention of (\ref{defyz}) in order to avoid a clash of notation, because $y$'s are taken). 

We may also assume that, 
%\textcolor{red}{What if we do not have condition C2 but only monotonicity?}
$$
\dist(b_{i_o},V) < \dist(a_{i_o},V)\quad\hbox{and} \quad
\dist(w_{i_o},V) < \dist(z_{i_o},V).
$$
We consider a triangle $T:= \bigtriangleup(V,b_{i_o},w_{i_o})$ and the following cases (i) none of points $x_1$, $x_2$ belongs to $T$, (ii) just one of $x_1$, $x_2$ belongs to $T$, (iii)  $x_1$ and $x_2$ belong to $T$.

%\textcolor{red}{The next paragraph is very ambiguous}
It is obvious that (i) reduces to (I.b). Situation in (ii) may reduced to 
%(2.b) and 
(iii) %below \textcolor{red}{What is below?} 
by introducing an additional point $x_3$, which is the intersection of $[x_1, x_2]$ with 
$[b_{i_o},w_{i_o}]$.  Then,
\begin{eqnarray}\label{1c-dod}
|u_n(x_1) - u_n(x_2)| &\le&
|u_n(x_1) - u_n(x_3)| + |u_n(x_3) - u_n(x_2)|\nonumber\\
&\le &
\omega_{u_n}(|x_1 - x_3| ) + \omega_{u_n}(|x_2 - x_3| ) \nonumber\\
&\le & 2\omega_{u_n}(|x_1 - x_2| ).
\end{eqnarray}

Finally, we  pay attention to (iii). In this case
points $x^{t_1}$, $x^{t_2}$ $\in \ell_1$ and  $y^{t_1}$, $y^{t_2}\in \ell_2$ are all %not restricted 
in $T$. %and neither (i) nor (ii) holds. %\textcolor{red}{Not restricted to what?}. 
In this case the admissibility condition %(C1) and 
(C2) implies that $f$, restricted to $[b_{i_o},V]$, %and $[w_{i_o},V]$ 
is monotone. 

We claim that $f$, restricted to $[w_{i_0},V]$, is monotone too. Let us suppose otherwise, i.e. there are $x_1,$ $x_2\in [w_{i_0},V]$ such that $f(x_1)> f(x_2)$ and $\dist{(x_1,V)} < \dist{(x_2,V)}$, we recall that, by assumption $V$ is a local minimum of $f$.
As a result there must be a local maximum of $f$ on $[w_{i_0},V]$. This maximum must be attained on a hump $[a', b']$. We call the points defined in (\ref{defyz}) by $y'$ and $z'$, respectively.

Let us suppose that $f(a') = f(b')> f(w_{i_0})$, then $y'$ and $z'$ cannot belong to $[b_{i_0},V]$. By Lemma \ref{key-l}, the quadrilateral $Q_n=\conv(\alpha^n_{i_0},\beta^n_{i_0}, w^n_{i_0},z^n_{i_0})$ is contained in $E^n_{f(b_{i_0})}$. At the same time $[a',y']$ and $[b', z']$ must intersect $Q_n$ but this is impossible, because the boundaries of the level sets cannot intersect.  Let us remark that the argument is basically the same if $f(a') = f(b')< f(V).$

Let us consider  $f(a') = f(b')< f(w_{i_0})$. If this happens, then there is an additional local minimum, which must be attained  on a hump $[a'', b''].$ 
We call the points defined by (\ref{defyz})  $y''$ and $z''$. Since $f(a'')< f(a')$ and $f$ is monotone on $[b_{i_0},V]$,  then we deduce that either $[a'',y'']$ or $[b'',z'']$ intersect $[a',y'] \cup [b',z']$, which violates the OPC or  $[a'',y'']$, $[b'',z'']$ intersect $Q_n$ defined above. However, the last event is impossible due to Lemma \ref{key-l}, as argued above. 

Hence, we conclude that $f$ restricted to $[w_{i_0},V]$ is monotone. We remark that the argument is similar if $f$ has a local maximum at $V$.
%The only possibility of such kind, compatible with the position of $e_1$, $e_2$ we are discussing is that $f$ attains a minimum/maximum at $V$. 

We have reached   exactly the content of the  case (I.d) considered below.

(I.d) 
In  this case, (see Fig. \ref{rys3}), due to the admissibility condition (C1) $f$ restricted to $\ell_1$ and $\ell_2$ is monotone and it attains  a minimum/maximum at $V$. Thus, we proceed as in \cite{grs}. We notice that
$$
|x_1-x_2| \ge \dist(x_2, e_1) = \min\{ \dist(x^{t_2}, e_1), \dist(y^{t_2}, e_1)\}.
$$
In addition, if $\beta_i$ is the angle formed by $e_1$ with $\ell_i$, $i=1,2$, then we notice
$$
\sin\beta_1 = \frac{\dist(x^{t_2}, e_1)}{|x^{t_2}-x^{t_1}|},\qquad
\sin\beta_2 = \frac{\dist(y^{t_2}, e_1)}{|y^{t_2}-y^{t_1}|}.
$$
While estimating $\sin\beta_i$, $i=1,2$, we have to take into account that $\ell_1$ and $\ell_2$ form 
an angle $\gamma$. Thus,
$$
\sin\gamma = \frac{d^y}{|y^{t_2}-y^{t_1}|},
$$
if $|y^{t_2}-y^{t_1}|> |x^{t_2}-x^{t_1}|$ and
$$
\sin\gamma = \frac{d^x}{|x^{t_2}-x^{t_1}|},
$$
in the opposite case. In these formulas, $d^y$ (resp. $d^x$) denotes the length of the orthogonal projection of the line segment $[y^{t_2},y^{t_1}]$  (resp. $[x^{t_2},x^{t_1}]$) on the line perpendicular to $\ell_1$  (resp. $\ell_2$). The above formulas are correct for $\gamma\in(0,\pi)$.

Thus, we can estimate $\sin \beta_i$, $i=1,2$, below as follows,
$$
\sin \beta_1 \ge \frac{d^y}{\diam(\Omega)} = \frac{\sin\gamma |y^{t_2}-y^{t_1}|}{\diam(\Omega)},
\qquad
\sin \beta_2 \ge \frac{d^x}{\diam(\Omega)} = \frac{\sin\gamma |x^{t_2}-x^{t_1}|}{\diam(\Omega)}.
$$
As a result, 
$$
|x_1-x_2| \ge \frac{\sin\gamma}{\diam\Omega}|x^{t_2}-x^{t_1}| |y^{t_2}-y^{t_1}|.
$$
Hence,
$$
\sqrt{\frac{\diam\Omega}{\sin\gamma}} \sqrt{|x_1-x_2|} \ge 
\min \{|x^{t_2}-x^{t_1}|, |y^{t_2}-y^{t_1}|\}.
$$
Arguing as in parts (I.a) and (I.b), we come to the conclusion that
\begin{equation}\label{om_3}
 |v_n(x_1) - v_n(x_2)| \le \omega_f(A \sqrt{|x_1 -x_2|}), %/\sin \alpha).
\end{equation}
where $A = \sqrt{\diam\Omega}/ \sqrt{\min %_{\ell_1 \parallel \ell_2}
\sin\gamma}$  and the minimum here is taken over all pairs of intersecting sides.
%all pairs of parallel sides $\ell_1$ and $\ell_2$.}

Subcase (I.e): 
$e_1$ and $e_2$, defined earlier, intersect $\ell_1$. In addition, there are two different sides $\ell_2$ and $\ell_3$ intersecting $e_1,$ $e_2$ i.e., $e_1 \cap\ell_2 \neq \emptyset$ 
and $e_2\cap \ell_3 \neq \emptyset$. We have three possibilities corresponding to the number of points in the set,
%(i)  $\ell_1 \cap (\ell_2 \cup \ell_3) = \emptyset;$ we will reduce it to (1.b).\\
%(ii) $\ell_1 \cap (\ell_2 \cup \ell_3) = \{V\}$; we will reduce it to (1.c).\\
%(iii) the intersection 
$
\ell_1 \cap (\ell_2 \cup \ell_3).$  %\marginpar{I removed\\ display}
 %consists of two points.

%\textcolor{red}{The subcases and cases and all are confusing.}
We proceed  as follows. Let us suppose that $\ell_2 \cap \ell_3= \{P\}$. %ater we will consider $\ell_2 \cap \ell_3= \emptyset$.
We take $t_3$ such that $\d E^n_{t_3} \cap \d\Omega$ contains $P$. If there
is no such $t_3$, then we are in the situation of {\bf Case II} considered below. 

We define   $e_3$ to be a component of $\d E^n_{t_3} $ containing $P$. Now, $e_3$
intersects segment $[x_1, x_2]$ at $x_3$ and
$\ell_1$ at $x^{t_3}$. Thus, pairs $x_1$, $x_3$ and $x_3$, $x_2$ fall into the 
known category (I.b), (I.c) or (I.d).

We have to proceed iteratively,  when $\ell_2$ and $\ell_3$ are disjoint.  Let us suppose that $\ell'_1, \ldots, \ell'_k$ is a chain of sides joining $\ell_2$ and $\ell_3$ (and different from them), i.e.,
$$
\ell_2 \cap \ell'_1 \neq \emptyset, \quad
\ell'_i \cap \ell'_{i+1}\neq \emptyset, i=1,\ldots, k-1,\quad
\ell_3 \cap \ell'_k \neq \emptyset.
$$
Now, we use the argument above for each of the pairs of sides. We
assume existence of $\tau'_i\in \bR$, $i=0,\ldots, k$ such that
$$
\ell_2 \cap \ell'_1 \in \d E_{\tau_0'}, \quad 
\ell'_i \cap \ell'_{i+1}\in \d E_{\tau_i'}, i=1,\ldots, k-1,\quad
\ell_3 \cap \ell'_k \in  \d E_{\tau_k'}.
$$
Otherwise, i.e. if one of such $\tau'_i$ is missing, then we are in the situation discussed in {\bf Case II} below. Let
$$
\{x_0' \} = \d E_{\tau_0'}\cap [x_1,x_2], \quad 
\{x_i' \} = \d E_{\tau_i'}\cap [x_1,x_2],  i=1,\ldots, k-1,\quad 
\{x_k' \} = \d E_{\tau_k'}\cap [x_1,x_2].
$$
Then, we deduce estimate (\ref{teaz-l}) as follows. The triangle inequality yields
$$
|v_n(x_1) - v_n(x_2)| \le \sum_{i=0}^{k-1}|v_n(x_i') - v_n(x_{i+1}')|  \le \sum_{i=0}^{k-1}
\omega_f(A\sqrt{|x_i' - x_{i+1}'|} +B |x_i' - x_{i+1}'| ).
$$
By the concavity of $\omega_f$  and the square root, we have,
\begin{eqnarray}\label{wyps}
\sum_{i=0}^{k-1} \frac kk
\omega_f(A \sqrt{|x_i' - x_{i+1}'|} +B |x_i' - x_{i+1}'| )&\le& 
k \omega_f(\frac A{\sqrt k}\sqrt{|x_0' - x_{k}'|} +\frac Bk |x_0' - x_{k}'| ) \nonumber \\
&=&
k \omega_f( A %\frac A{\sqrt k}
\sqrt{\frac {|x_1 - x_2|}k} +\frac Bk |x_1 - x_2|).
\end{eqnarray}
We can bound $k$ by the number $K$ of sides. %but we expect that we will get more precise bounds on $k$ locally.

%Subcase (e): $e_1$ and $e_2$, defined earlier, intersect $\ell_1$ a side and 
%$e_1$ and $e_2$ intersect $\cC$ a connected component of \marginpar{\textcolor{red}{(e), (f) are  wrong!}}
%$\d\Omega \setminus \bigcup_{i=1}^K \ell_i$, and $\cC \cap \ell = \emptyset$.
%In such a situation we proceed as in case (b), but instead of $\ell_2$, we 
%consider all cords of arc $\cC$. We can estimate 
%$|v_n(x_1) - v_n(x_2)|$ as in (\ref{om_1}).

%We may assume that $\bar x^{t_1}, \bar y^{t_1} \in\ell $ and  $\bar x^{t_2}, \bar y^{t_2} 
%\in \d\Omega \setminus \bigcup_{i=1}^K \ell_i$. We define $\tilde\ell_1 =[\bar x^{t_1},\bar y^{t_1}]$ and
%$\tilde\ell_2 =[\bar x^{t_2},\bar y^{t_2}]$. Once we introduced $\tilde\ell_1$ and  $\tilde\ell_2,$ they will play the role of $\ell_1$ and  $\ell_2$. We recognize one of the subcases (a) to (e). We notice that the situation simplifies a bit since $\bar x^{t_2}, \bar y^{t_2}\in \Omega$.

%Subcase (f) occurs when both $e_1$ and $e_2$  intersect $\d\Omega \setminus \bigcup_{i=1}^K \ell_i$. We proceed as in subcase (e) and we notice that $\bar x^{t_i}, \bar y^{t_i}\in \Omega$, $i=1,2$.

{\bf Case II:} $x_1$ belongs to $\d E^n_t$, while there is  no real $s$ for which point $x_2$ belongs to $\d E^n_s$. We reduce it to {\bf Case I}. Since $v_n$ is continuous, thus $v_n(x_2)=\tau$ is well-defined. We take $x_3 \in \d E^n_\tau\cap [x_1,x_2]$. As a result, couples $x_1, x_3$ and $x_3, x_2$ fall into one of the investigated categories above. 

The final {\bf Case III} is when neither $x_1$ nor $x_2$ belong to any $\d E^n_t$. Let us assume that $t_1>t_2$ (in case $t_1 =t_2$ there is nothing to prove). 
We take $x_3 \in [x_1,x_2]\cap \d
E^n_{t_1}$. Clearly, the present case reduces to the previous one, because $v_n(x_1) = v_n(x_3)$ and the couple $x_2$, $x_3$ belongs to  Case II.\qed
%{\color{red} I have a remark about the previous proof, case I.d at the end refers to case II and then Case II refers back to case I. Are you sure there is no circular arguments there?}
%\textcolor{red}{In above proof, the cases should be clearly highlighted especially the sub cases.}
%\textcolor{red}{I kind of understood what is going on, but this Lemma plays a key role in this paper, and needs to be rewritten in a much clearer way.}

%\AS{WE ALREADY said the below remark after the theorem ,repetition.}
%\begin{remark}
%We made a crude estimate of $k$ in (\ref{wyps}) by the number of all sides. We point to a possible situation \AS{of polygonal domain}, when this \sout{is the best} estimate \sout{which} actually blows up. \AS{If $\partial \Omega$ has infinitely many sides and} f $p_0$ is an endpoint of $\ell$, which is also an accumulation point of the sequence $\{\ell_i\}_{i=1}^\infty$ of sides, and the data $f$ is such that $f|_{\bigcup _{i=1}^\infty \ell_i}$ and $f|_\ell$  are monotone as a function of the distance from $p_0$, then in principle $k$ in (\ref{wyps}) is not bounded.
%\end{remark}

With the help of this Lemma we establish the first of our results, which forms the content of Theorem \ref{tsuf-C}, part (a).

\begin{theorem}\label{tm-sk}
 Let us suppose that $f\in C(\d\Omega)$, where
$\Omega$ is an open, bounded and convex set, whose boundary is a polygon and $\{\ell_j\}_{j\in \cI}$ is the finite family of sides of $\d\Omega$. 
In addition we assume that the number of humps is finite. If $f$ satisfies the
admissibility conditions (C1) or  (C2) on all sides
of $\d\Omega$, as well as the complementing ordering preservation condition, (\ref{OPC}), and the data consistency condition, (\ref{dcc1}-\ref{dcc2}), then problem (\ref{lg}) has a unique solution.%
\end{theorem}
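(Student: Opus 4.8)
The plan is to obtain the solution as a uniform limit of the least gradient solutions on the strictly convex approximations, and then to argue existence and uniqueness separately. For existence I would first invoke Lemma~\ref{prap} to produce the decreasing family of strictly convex domains $\Omega_n\supseteq\Omega$ with $\bar\Omega_n\to\bar\Omega$ in the Hausdorff distance, and Definition~\ref{def:fn} to set $f_n=f\circ\pi$ on $\partial\Omega_n$. Since each $\Omega_n$ is strictly convex and each $f_n$ is continuous, \cite{sternberg} yields a unique continuous least gradient solution $v_n$ on $\Omega_n$ with trace $f_n$. The decisive input is Lemma~\ref{le2}: because the number of sides $K$ is finite, the functions $v_n$ share a modulus of continuity $\tilde\omega$ independent of $n$. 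Restricting to the fixed domain $\bar\Omega\subseteq\bar\Omega_n$, the family $\{v_n|_{\bar\Omega}\}$ is equicontinuous and uniformly bounded by $\|f\|_\infty$, so the Arzel\`a--Ascoli theorem furnishes a subsequence converging uniformly to some $u\in C(\bar\Omega)$.

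Next I would identify this limit as a solution of (\ref{lg}). For the trace, fix $x\in\partial\Omega$ and choose $y_n\in\pi_n^{-1}(x)\subset\partial\Omega_n$; then $v_n(y_n)=f_n(y_n)=f(x)$ while $y_n\to x$ by Proposition~\ref{c0}(2). The common modulus gives $|v_n(y_n)-v_n(x)|\le\tilde\omega(|y_n-x|)\to0$, and uniform convergence yields $u(x)=\lim_n v_n(x)=\lim_n v_n(y_n)=f(x)$, so $Tu=f$. For minimality I would use that the restriction of a least gradient function to the convex subdomain $\Omega\subseteq\Omega_n$ is again of least gradient, since the boundaries of its superlevel sets remain area minimizing in the smaller domain; thus each $v_n|_\Omega$ is of least gradient on $\Omega$, and as $v_n|_\Omega\to u$ uniformly, hence in $L^1(\Omega)$, the stability result of \cite{miranda} shows $u$ is of least gradient on $\Omega$. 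Together with $Tu=f$ this makes $u$ a solution.

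For uniqueness I would exploit the planar structure: any solution $u$ has superlevel sets $E_t=\{u\ge t\}$ whose boundaries inside $\Omega$ are line segments joining two boundary points that carry the level value $t$. The admissibility hypotheses are designed precisely to make this matching rigid---(C1)/(C2) fix, through the points $y_i,z_i$ and Lemma~\ref{key-l}, which boundary points get connected on each side; the OPC (\ref{OPC}) prevents the candidate segments from crossing; and the DCC (\ref{dcc1}-\ref{dcc2}) forbids opposing maxima and minima. Hence the family $\{\partial E_t\}$ is determined by $f$ alone, so any two solutions share their superlevel sets and coincide almost everywhere; this also shows the whole sequence $v_n|_\Omega$ converges and that $u$ is independent of the chosen subsequence.

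The main obstacle is ensuring that the boundary datum survives the limit: without uniform control of the oscillation of $v_n$ up to $\partial\Omega$ one could lose $f$ in passing to the limit, and equicontinuity uniform in $n$ is exactly what Lemma~\ref{le2} supplies---its blow-up for an unbounded number of sides is what forces the separate treatment in parts (b) and (c). The two points I would check most carefully are that restriction to $\Omega$ genuinely preserves the least gradient property across the varying domains, and that the level-set matching dictated by (C1)/(C2)/OPC/DCC is single-valued even on the flat sides, where the loss of strict convexity removes the usual guarantee that $\partial E_t$ meets $\partial\Omega$ at a unique point.
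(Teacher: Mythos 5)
Your existence argument coincides, step for step, with the paper's own proof: the same approximation via Lemma \ref{prap} and Definition \ref{def:fn}, the same equicontinuity input from Lemma \ref{le2}, the same trace identification using Proposition \ref{c0}, and the same passage to the limit by restricting to $\Omega$ (the paper cites \cite[Proposition 4.1]{grs} for the fact that $\chi_\Omega v_n$ is of least gradient) followed by the stability result of \cite{miranda}. That half is correct.

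The uniqueness half, however, has a genuine gap. The paper does not prove uniqueness by level-set rigidity; it invokes the classification of \cite[Theorem 1.1]{gorny2}: any two solutions can differ only through the values assigned on fat level sets (level sets of positive Lebesgue measure), and for continuous data and solutions there is no freedom in that assignment. You instead assert that the admissibility conditions make the family $\{\partial E_t\}$ \emph{determined by $f$ alone}, but this is precisely what would need proof, and the tools you cite do not supply it. Lemma \ref{key-l} (and later Proposition \ref{quadrupole}) constrain the approximating solutions $v_n$ on $\Omega_n$, hence the particular limit $u$ you constructed --- not an arbitrary competitor solution of (\ref{lg}). For an arbitrary solution one only knows that a.e.\ component of $\partial E_t\cap\Omega$ is a chord with endpoints in $f^{-1}(t)$; when $f^{-1}(t)$ contains four or more points (which is exactly the situation created by humps, e.g.\ the data of Corollary \ref{co-1}) there are several non-crossing endpoint matchings a priori, and OPC/DCC are formulated in terms of the distinguished points $y_i$, $z_i$ attached to humps, not in terms of the level sets of a hypothetical second solution. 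Ruling out the alternative matchings for every level simultaneously is a real argument --- the same ambiguity is what drives the non-existence examples of Section \ref{examples} and the non-uniqueness phenomena classified in \cite{gorny2} --- so your conclusion that \emph{any two solutions share their superlevel sets} is unsupported as written. The repair is either to carry out that comparison argument in detail, or, as the paper does, to quote \cite[Theorem 1.1]{gorny2} together with the observation that continuity removes the only possible source of non-uniqueness.
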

%{\color{red} Sometimes we use $\Omega$ as union of $\ell_\alpha$ and othertimes as union of $\ell_j$ or $\ell_k$....}
\begin{proof}
%We assume initially that $\Omega$ has a finite number of sides and we have a  finite number of humps.
We use Lemma \ref{prap} to find a sequence 
of  strictly convex regions,  $\Omega_n$, approximating  $\Omega$. The continuity modulus of 
the boundary function $f$ is denoted by $\omega_f$. 
We notice that all $f_n$ have continuity modulus $\omega_f$. Moreover, the conclusion of %By
Corollary \ref{c1} holds. %functions $f_n$ satisfy the admissibility conditions.

By Lemma \ref{le2}, % notice that by classical result, \cite{sternberg}, 
there exists a
unique solution, $v_n$ to the least gradient problem (\ref{lg}) on
$\Omega_n$ with data $f_n$. Moreover, functions $v_n$ are equicontinuous, because their modulus of continuity is bounded by $\tilde{\omega}$ given in \eqref{teaz-l}.

By the maximum principle, see \cite{sternberg}, sequence $v_n$ is
uniformly bounded and one can show %a uniform estimate on 
$\int_{\Omega_n} |Dv_n|\le M<\infty.$ 
Now, we set,
$$
u_n = \chi_\Omega v_n.
$$
From \cite[Proposition 4.1]{grs} we know that $u_n$ are least gradient functions.

Since functions $u_n$ are uniformly bounded and due to Lemma \ref{le2}, they have the common
continuity modulus $\tilde\omega$, there is a subsequence (not
relabeled) uniformly converging to $u$. The uniform convergence
implies convergence of traces, i.e. $Tu_n$ goes to $Tu$. Since $Tu_n$
tends to $f$, we shall see that $Tu = f$. Indeed, if $x\in \d\Omega$ and $y_n\in\pi_n^{-1}(x)$, then
\begin{eqnarray*}
 |u_n(x) - f(x)| & \le & | v_n(x) - v_n (y_n) | + 
 | v_n (y_n)- f(x)| \\
 &=& | v_n(x) - v_n (y_n) | +
 |f_n(y_n)- f(x)|.
\end{eqnarray*}
By definition of $f_n$, we have $f_n(y_n)= f(x)$.
Due to the last part of Proposition \ref{c0}, $y_n$ goes to $x$. Since $v_n$ converges uniformly, we conclude that the right-hand-side above converges to zero, so $Tu = f$.

Moreover, the uniform convergence of $u_n$ implies the convergence of
this sequence to $u$ in $L^1$. Hence, by classical results,
\cite{miranda}, we deduce that $u$ is a least gradient function. Since
it satisfies the boundary data, we deduce that $u$ is a solution to the
least gradient problem. Moreover, the modulus of continuity of $u$ is $\tilde\omega$.

Once we proved existence, we address the problem of uniqueness of solutions. In \cite{gorny2}, the author studied the problem of uniqueness of solutions to the least gradient problem understood in the trace sense, as  we do here. The cases of non-uniqueness are classified there and related to the possibility of different partition of `fat level sets', i.e. level sets with a positive Lebesgue measure, and with the possibility of assigning different values there. In case of continuous data and solutions, we do not have any freedom to choose values of solutions on fat level sets. Thus, \cite[Theorem 1.1]{gorny2}
implies that a solution we constructed is, in fact, unique.
\end{proof}
%\textcolor{red}{The proof of the above theorem is clear now, the only remaining part in this section is to rewrite the proof of Lemma 3.6. }
%[UNIQUENESS]
%\textcolor{red}{Uniqueness follows really from the comparison principle proved in Proposition 4.5. Once you have comparison principle then continuous solutions are unique.}

Finally, we show that the level sets of $u$ are as we expected.

\begin{proposition}\label{quadrupole}
Let us suppose that $f$ and $\Omega$ satisfy  the hypothesis of Theorem \ref{tm-sk}. If $[a, b]\subset \ell$ is a hump, then %\\(a) 
the quadrupole $Q=\conv(a, b, y, z)$ is contained in $E_e=\{ u \ge e\}$, where $e = f([a,b])$. 
%(b) the intervals $[a,y]$ and $[b,z]$ are subsets of $\partial E_e$.\AS{Should be removed}
\end{proposition}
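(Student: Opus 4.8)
The plan is to deduce the statement from Lemma \ref{key-l}(a) by passing to the limit $n\to\infty$ in the approximation used to build $u$. Recall from the proof of Theorem \ref{tm-sk} that the solution $u$ is the uniform limit (along a subsequence, not relabeled) of $u_n=\chi_\Omega v_n$, where $v_n$ solves the least gradient problem on $\Omega_n$ with datum $f_n$, and that on $\bar\Omega$ one has $u_n=v_n|_{\bar\Omega}$. Fix a choice of the points $y,z$ associated with the hump $[a,b]$ through (\ref{defyz}), and let $\alpha^n,\beta^n,\zeta^n,\psi^n\in\partial\Omega_n$ be the corresponding points defined before Lemma \ref{key-l}. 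By Lemma \ref{key-l}(a), for all sufficiently large $n$ the quadrilateral $Q_n=\conv(\alpha^n,\beta^n,\zeta^n,\psi^n)$ satisfies $Q_n\subset E^n_e=\{v_n\ge e\}$.

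First I would record that the vertices of $Q_n$ converge to those of $Q$. Indeed $\alpha^n=\pi_n^{-1}(a)$, and likewise for the other three points, so Proposition \ref{c0}(2) gives $\alpha^n\to a$, $\beta^n\to b$, $\zeta^n\to y$ and $\psi^n\to z$ as $n\to\infty$. Consequently $Q_n\to Q$ in the Hausdorff distance.

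Next I would transfer the inclusion $Q_n\subset\{v_n\ge e\}$ to the limit. Let $x_0\in\mathrm{int}(Q)$; since $a,b,y,z\in\partial\Omega$ and $\Omega$ is open and convex, $x_0\in\Omega$. Using that $Q_n$ and $Q$ are convex sets converging in the Hausdorff metric with $\mathrm{int}(Q)\neq\emptyset$, any point of $\mathrm{int}(Q)$ is contained in $Q_n$ for all large $n$; in particular $x_0\in Q_n$, whence $v_n(x_0)\ge e$. Since $x_0\in\Omega$ we have $u_n(x_0)=v_n(x_0)\ge e$, and letting $n\to\infty$ along the convergent subsequence yields $u(x_0)\ge e$. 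Thus $\mathrm{int}(Q)\subset E_e$.

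Finally, because $u$ is continuous the set $E_e=\{u\ge e\}$ is closed, and $Q=\overline{\mathrm{int}(Q)}$, so $Q\subset E_e$, as claimed. The only step requiring care is the containment-capture at the level of convex sets: Hausdorff convergence $Q_n\to Q$ does not by itself place a given interior point of $Q$ inside every $Q_n$, and I would justify it by a short separating-hyperplane argument (if $x_0\notin Q_n$ along a subsequence, separate $x_0$ from the convex set $Q_n$ by a unit normal $\nu_n$, test the separation against a point of $Q_n$ close to $x_0+\rho\nu_n\in B(x_0,\rho)\subset Q$, and reach a contradiction for large $n$). Everything else is a routine passage to the limit, so I expect no serious obstacle beyond this elementary convex-geometry point.
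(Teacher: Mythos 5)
Your proof is correct and follows exactly the paper's route: the paper's own proof is the one-line remark that the claim ``follows from Lemma \ref{key-l} and the uniform convergence of $u_n$,'' and your argument is precisely a careful elaboration of that statement (Lemma \ref{key-l}(a) for $Q_n\subset\{v_n\ge e\}$, convergence of the vertices via Proposition \ref{c0}(2), passage to the limit on $\mathrm{int}(Q)$, and closedness of $E_e$ by continuity of $u$). The convex-geometry step you flag is a legitimate detail the paper leaves implicit, and your separating-hyperplane justification handles it.
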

\begin{proof}
The claims follow from Lemma \ref{key-l} and the uniform convergence of $u_n$.
\end{proof}

\subsection{The case of an infinite number of sides and  a finite number of humps}\label{infinite Cont} % or  humps}
%\textcolor{red}{I am working here now.}
%In this case we deal with $\Omega$ convex with infinitely many number of parts.
We treat here  the case of $\Omega$ with infinitely many sides. The approach we used in the course of proof of Theorem \ref{tm-sk} cannot be used because the estimate given by
Lemma \ref{le2} depends on the number of sides. As a result, we are forced to impose an additional condition on $f$.
It could be expressed as the  admissibility condition (C1) at the accumulation point $p_0$, i.e. $p_0$ is a local minimum or maximum and there exists a neighborhood $B(p_0,\rho)$ of $p_0$ where $f$ is monotone.

We use a similar approach as in the previous theorem. We approximate the new problem by ones we can solve. In the present case, we approximate $\Omega$
by an increasing sequence of polygonal sets $\Omega_n$ having a finite number of sides.

The theorem stated below presents the content of Theorem \ref{tsuf-C}, part (b).
\begin{theorem}\label{tm-nsk}
Let us suppose that $\Omega$ is an open, bounded and convex set, whose boundary,  $\d\Omega$ is a polygon with  an infinite  number of sides. In addition,
there exists exactly one point $p_0$ being an endpoint of a side $\ell_0$, which is an accumulation point of the sides of $\d\Omega$. We assume  that $f\in C(\d\Omega)$, where
$f$ satisfies the
admissibility conditions (C1) or  (C2) on all sides of $\d\Omega$ and the Order Preserving Condition (\ref{OPC}) and the Data Consistency Condition, (\ref{dcc1}-\ref{dcc2}) hold and the number of humps is finite.
Finally, %In addition to that, 
$f$ attains a strict local maximum or minimum at $p_0$ and
there is $\rho_)>0$, such that $f$, restricted to each component of $(B(p_0,\rho_0)\cap \d\Omega) \setminus\{p_0\}$, is strictly monotone. Then, problem (\ref{lg}) has a unique solution $u$ belonging to $BV(\Omega)\cap C(\bar\Omega)$.
\end{theorem}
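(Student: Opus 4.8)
The plan is to mimic the strategy of Theorem \ref{tm-sk}, but to replace the single limiting sequence of strictly convex outer approximations with an \emph{increasing} sequence $\{\Omega_n\}_{n=1}^\infty$ of convex polygons, each having only \emph{finitely} many sides, which exhaust $\Omega$ from inside and satisfy the hypotheses of Theorem \ref{tm-sk}. Concretely, I would fix the accumulation point $p_0$ and the radius $\rho_0$ given in the statement, and for each $n$ truncate the infinitely many small sides accumulating at $p_0$: keep the sides $\ell_0,\ell_1,\dots,\ell_{m(n)}$ that lie outside $B(p_0,1/n)$ and replace the remaining tail of sides inside $B(p_0,1/n)$ by a single chord, so that $\Omega_n$ is a convex polygon with finitely many sides and $\Omega_n\nearrow\Omega$. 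One must check that $\Omega_n\subseteq\Omega_{n+1}\subseteq\Omega$ and that $\bar\Omega_n\to\bar\Omega$ in the Hausdorff metric; convexity makes this routine. The boundary datum $f_n$ on $\d\Omega_n$ would be taken equal to $f$ on the retained sides, and on the new chord I would interpolate between the two endpoint values of $f$ monotonically, exploiting that $f$ is \emph{strictly monotone} on each component of $(B(p_0,\rho_0)\cap\d\Omega)\setminus\{p_0\}$ and has a strict extremum at $p_0$. This strict monotonicity is precisely what guarantees that the truncated datum $f_n$ still satisfies (C1) near the chord and inherits (C2), OPC and DCC from $f$ on the unchanged part, so that Theorem \ref{tm-sk} applies to each $(\Omega_n,f_n)$ and yields a unique continuous solution $u_n$.

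The crucial structural point, which distinguishes this argument from part (a), is the \emph{nesting} of solutions: I would aim to show that $u_{n+1}|_{\Omega_n}=u_n$. The mechanism is the geometry of level sets. Because the finitely many humps of $f$ all sit at a fixed positive distance from $p_0$ (they are finite in number and $f$ is strictly monotone near $p_0$), for $n$ large the quadrilaterals $Q_n$ produced by Lemma \ref{key-l} and Proposition \ref{quadrupole} stabilize: the portion of $\d\Omega$ modified at stage $n$ meets only the part of $\Omega$ inside $B(p_0,1/n)$, where $f$ is monotone, so no hump-driven fat level set is disturbed. In that regime the level-set boundaries $\d E^n_t$ are straight chords determined by boundary values of $f_n=f$ that agree on $\Omega_n$, whence the minimizer on $\Omega_{n+1}$ restricted to $\Omega_n$ is forced to coincide with the minimizer on $\Omega_n$ by the uniqueness statement from \cite{gorny2}. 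Establishing this compatibility rigorously is what I expect to be the main obstacle, because one must argue that enlarging the domain near $p_0$ cannot retroactively change the solution away from $p_0$; the natural tool is the monotonicity of $f$ near $p_0$, which prevents the creation of new extrema and hence of new competing level-set configurations in $B(p_0,\rho_0)$.

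Granting the compatibility $u_{n+1}|_{\Omega_n}=u_n$, the sequence defines a single function $u$ on $\Omega=\bigcup_n\Omega_n$ by $u|_{\Omega_n}=u_n$. I would then verify the three properties needed to conclude. First, $u\in BV(\Omega)$ with $\int_\Omega|Du|\le M<\infty$: the total variations $\int_{\Omega_n}|Du_n|$ are uniformly bounded by the maximum principle together with the comparison $\int_{\Omega_n}|Du_n|\le \operatorname{length}(\d\Omega)\cdot\operatorname{osc} f$ in the least-gradient setting, and nesting gives monotone convergence of the variations. Second, $u$ is a function of least gradient on $\Omega$: any competitor differing on a compact $K\subset\Omega$ is supported in some $\Omega_n$, and minimality of $u_n$ there yields minimality of $u$ on $\Omega$ by the standard localization of the least-gradient property \cite{miranda}. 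Third, $u\in C(\bar\Omega)$ with $Tu=f$: near $p_0$ continuity up to the boundary follows from the strict extremum of $f$ at $p_0$, which traps $u$ between nested level sets collapsing to the value $f(p_0)$; away from $p_0$, on each fixed $\Omega_n$ the solution $u_n$ is continuous with the trace $f_n=f$, as in Theorem \ref{tm-sk}. The delicate piece is uniform continuity as one approaches $p_0$, where the estimate of Lemma \ref{le2} genuinely fails; here I would use the monotone, strict-extremum structure to produce directly a modulus of continuity at $p_0$ rather than an $n$-independent global modulus. Finally, uniqueness follows from \cite[Theorem 1.1]{gorny2} exactly as in Theorem \ref{tm-sk}, since continuous data leave no freedom on fat level sets.
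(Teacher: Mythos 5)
Your skeleton is the same as the paper's (exhaust $\Omega$ from inside by finitely-sided convex polygons obtained by cutting off the tail of sides near $p_0$ with a chord, solve on each piece via Theorem \ref{tm-sk}, prove the nesting $u_{n+1}|_{\Omega_n}=u_n$, pass to the limit), but there is a genuine gap exactly at the nesting step, and it originates in your choice of datum on the cutting chord. You cut along an essentially arbitrary chord $L_n$ (determined only by the requirement that the discarded sides lie in $B(p_0,1/n)$) and place on it ``a'' monotone interpolation between the two endpoint values of $f$, which are in general \emph{different}. But the trace of $u_{n+1}$ on $L_n$ is not at your disposal: the level-set boundaries of $u_{n+1}$ near $p_0$ are chords joining pairs of points of $\partial\Omega$ with equal $f$-values, so $u_{n+1}|_{L_n}$ is the specific function whose value at each crossing point is the corresponding common value of $f$. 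A generic monotone interpolation does not coincide with this function; then $u_{n+1}|_{\Omega_n}$ and $u_n$ have different traces on $\partial\Omega_n$, the uniqueness result of \cite{gorny2} cannot be invoked to identify them, and the compatibility $u_{n+1}|_{\Omega_n}=u_n$ --- which you correctly single out as the heart of the matter --- fails. Without nesting the construction collapses, since there is no equicontinuity to fall back on: the modulus in Lemma \ref{le2} degenerates as the number of sides grows, which is the very reason part (b) requires a proof different from part (a).

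The paper removes this obstruction by choosing the chord so that the problem cannot arise: it selects $x_n\in\ell_0$ and $y_n$ on the other component of $(B(p_0,\rho_{n-1})\cap\partial\Omega)\setminus\{p_0\}$ with $f(x_n)=f(y_n)$ (such $y_n$ exists and is unique precisely because $f$ has a strict extremum at $p_0$ and is strictly monotone on both components near $p_0$), sets $L_n=[x_n,y_n]$, $\Omega_n=\Omega\setminus H(L_n,p_0)$, and defines $f_n\equiv f(x_n)$ on $L_n$. With a \emph{constant} datum on the chord, a maximum/level-set argument becomes available: if the trace of $u_{n+1}$ on $L_n$ were not identically $f(x_n)$, a component of $\partial\{u_{n+1}>s\}$ would have to cross $L_n$ with endpoints on both sides of it, contradicting the structure of $f$ near $p_0$; only after this does uniqueness yield $u_{n+1}|_{\Omega_n}=u_n$. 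Two secondary points: your bound $\int_{\Omega_n}|Du_n|\le \mathrm{length}(\partial\Omega)\cdot\mathrm{osc}(f)$ is asserted without justification, whereas the paper obtains $\mathrm{diam}(\Omega)\cdot\mathrm{osc}(f)$ from the coarea formula using that near $p_0$ each level line has a single component of length at most $\mathrm{diam}\,\Omega$; and the limiting steps (lower semicontinuity for $BV$, localization of the least-gradient property against compactly supported competitors, continuity and trace at $p_0$ from the vanishing oscillation of $f$ on $\partial\Omega\setminus\partial\Omega_n$) are fine and match the paper. The repair is local but essential: replace the arbitrary chord plus interpolation by the equal-value chord with constant datum.
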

%\begin{remark}
%We cannot estimate the modulus of continuity of the solution. Since Lemma \ref{le2} is not applicable we have no tools to do this.
%\end{remark}

\begin{proof}
We begin with a construction of a sequence of convex sets $\Omega_n$, such that $\d\Omega_n$ is a polygon with a finite number of sides. We may assume that $f$ attains a maximum at $p_0$, the argument in  the case of a minimum is similar.

For $\rho$ given in the statement of the theorem, we consider all sides of $\d\Omega$, $\{\ell_k\}_{k=1}^\infty$, contained in $B(p_0,\rho_0)$. Since we assumed that $f$ restricted to each component of $(B(p_0,\rho_0)\cap \d\Omega) \setminus\{p_0\}$, is strictly monotone, we deduce that
sides contained in $B(p_0,\rho_0)$ have no humps, i.e. $f$ satisfies (C1) on each side $\ell'$ contained in $B(p_0,\rho_0)$. This follows from the fact that $f$ has a strict maximum at $p_0$.

We set $m_1:= \max\{f(x):\ x\in \d B(p_0,\rho_0)\cap \d\Omega\}$ and $x_1\in \bar B(p_0,\rho_0)\cap \ell_0$ to be such that $f(x_1) = m_1$. We set $y_1\in (\bar B(p_0,\rho_0)\cap \d\Omega)\setminus \ell_0$ to a point to $x_1$ such that $f(x_1) = f(y_1)$. By monotonicity at $p_0$, we know that such $y_1\in \bar B(p_0,\rho_0)$ is unique. We define,
$$
\rho_1 = \min\left\{ \frac 12 \rho_0, \dist(x_1,p_0), \dist(y_1,p_0)\right\}.
$$
Subsequently, we proceed by induction. Once $x_k,$ $y_k$, $m_k$ $\rho_k$ are set, we define
$x_{k+1},$ $y_{k+1}$, $m_{k+1}$, and $\rho_{k+1}$ as follows. We introduce
$m_{k+1} = \max\{f(x):\ x\in \d B(p_0,\rho_k)\cap \d\Omega\}$ and $x_{k+1}\in \bar B(p_0,\rho_k)\cap \ell_0$ is the point such that $f(x_{k+1}) = m_{k+1}$. We set $y_{k+1}\in (\bar B(p_0,\rho_k)\cap \d\Omega)\setminus \ell_0$ to be the only point to $x_{k+1}$ such that $f(x_{k+1}) = f(y_{k+1})$.

We define,
$$
\rho_{k+1} = \min\{ \frac 12 \rho_k, \dist(x_{k+1},p_0), \dist(y_{k+1},p_0)\}.
$$
Obviously, we have $x_{k+1}, y_{k+1}\in \bar B(p_0,\rho_k)$. Since $\rho_{k+1}\le \frac 12 \rho_k\le 2^{-k} \rho_0$, we conclude that $x_k$ and $y_k$ converge to $p_0.$
%if $x_0\in (B(p_0,\rho)\setminus B(p_0,\frac{\rho}{2}))\cap\d\Omega$, then $y_0\in\d\Omega$, the closest point to $x_0$ such that $f(x_0) = f(y_0)$, belongs to $B(p_0,\rho)$. 

%We define sequences $\{x_n\}_{n=1}^\infty$ and $\{y_n\}_{n=1}^\infty$ inductively.
%Due to our assumptions on the behavior of $f$ near $p_0$, we can pick
%$x_n\in B(p_0,\frac{\rho}{2^n})\setminus B(p_0,\frac{\rho}{2^{n+1}})$, and $y_n\in \bigcup_{n=1}^\infty \ell_n$, $|y_n-p_0|<|y_{n-1}-p_0|$ and $f(x_n)=f(y_n)$. {\color{blue} 
For a line segment $L$  we introduce  (cf. the definition of $H^b(L,p_0)$in (\ref{dfHb})),
$$
H(L,p_0)\quad\hbox{is the closed half-plane containing }p_0,
\hbox{ whose boundary contains } L.
$$
Define $L_n=[x_n,y_n]$ and take $H(L_n,p_0)$. 
%to be the half-plane$p_0$ s the line segment $L_n$.
%$T_n$ be the triangle  with vertices $p_0,x_n,y_n$, we also
We introsuce $\Omega_n=\Omega\setminus H(L_n, p_0)$, $n \in \mathbb{N}$
and
$$
f_n(x)=
\begin{cases} f(x) & x\in \partial \Omega\cap \partial \Omega_{n},\\ f(x_{n}) & x\in L_n .
\end{cases}
$$ 
Of course, $f_n$ satisfies the (C1) or (C2) admissibility conditions and each $\Omega_n$ has a finite number of sides. Moreover, by the choice of $x_n$ and $y_n$, functions $f_n$ satisfy the Order Preserving and Data Consistency  Conditions. 

These observations imply that we may use Theorem \ref{tm-sk} to deduce existence of $u_n$, the unique solutions to the Least Gradient Problem in $\Omega_n$ with data $f_n$, $n\in \bN$.

Clearly, $\Omega_{n-1}\subseteq \Omega_n$. %and by construction $L_{n-1}$ is \textcolor{blue}{a component of} the boundary of level set of $u_n$, i.e. $f_n(x)=f_{n-1}(x)$ on $L_{n-1}$. 
In this section, when it is necessary, we explicitly denote by the proper subscript, the domain of definition of the trace operator.

We want to show that $T_{\partial \Omega_{n-1}} u_n=T_{\partial \Omega_{n-1}} u_{n-1}=f(x_{n-1})$ on $L_{n-1}$, where $T_{\partial \Omega_k}:BV(\Omega_k)\to L^1(\partial \Omega_k)$ denotes the trace operator.

Let us suppose that our claim does not hold, i.e. there is $\bar x\in L_{n-1}$ such that $u_n(\bar x) \neq u_{n-1}(\bar x)=f(x_{n-1})$. Without the loss of generality, we may assume that $u_{n}(\bar x) >f(x_{n-1})$. Let us set $s = \max\{ u_{n}( x):\ x \in L_{n-1}\}$. Thus, there is $\tilde{x} \in L_{n-1}$ belonging to $\partial\{ u_n > s\}\cap L_n$. As a result, the intersection of $\partial\{ u_n > s\}\cap L_n$
is non-empty and the component of $\partial\{ u_n > s\}$ passing through  $L_n$ must have endpoints in $\Omega_n$ and $\Omega\setminus\Omega_n$. But this contradicts the structure of $f$ on $\d\Omega$.

We know that $u_{n+1}|_{\Omega_n}$  is a least gradient function. Since its trace on $\partial\Omega_n$ coincides with the trace of $u_n$, we deduce that we have two solutions to the least gradient problem in $\Omega_n$. However, due to the  uniqueness of solutions, implied by Theorem \ref{tm-sk}, we infer that $u_{n+1}|_{\Omega_n} = u_n.$

We have to define a candidate for a solution at least a.e. in $\Omega$. We set,
$$
\bar u_n(x)=
\begin{cases} u_n(x) & x\in \Omega_{n}\\ f(x_{n}) & x\in \Omega\setminus\Omega_n .\end{cases}
$$
Of course, at each $x\in \Omega$, this sequence is bounded and increasing. Moreover, it is constant for $k\geq N$, for some $N$ depending on $x$, hence it has a limit everywhere,
$$
u(x)=\lim_{k\to\infty}\bar u_k(x),\qquad x \in \Omega.
$$
Moreover, the convergence is in $L^1(\Omega)$.

In order to prove that $u\in BV(\Omega)$, we use the lower semicontinuity of the $BV$ norm,
$$
\varliminf_{k\to \infty} \int_\Omega |D \bar u_k| \ge \int_\Omega |D \bar u|.
$$
By the continuity of $\bar u_k$ we have 
\begin{equation}\label{x1}
\int_\Omega |D\bar u_k| = \int_{\Omega_k} |D\bar u_k| .    
\end{equation}
Moreover, we may choose $k$ so large that $\Omega\setminus\Omega_k \subset B(p_0,\rho_0)\cap\Omega$. We recall that $f$ restricted to each component of $(\d\Omega\cap B(p_0,\rho_0))\setminus\{p_0\}$ is strictly monotone. This implies that each set $\d\{\bar u_k > t\}\cap \Omega$ has one component, for  a.e. $t\in f_k(\d\Omega_k)$. Since  $\{x\in \Omega: \bar u_k(x)=t\}$ are minimal surfaces, i.e. line segments with the  length not  exceeding $\diam \Omega$. Now, we can use the coarea formula to estimate the LHS (\ref{x1}) to deduce that
\begin{align*}
\int_{\Omega} |D\bar u_k|&= %\int _{\Omega\setminus\Omega_1} |Du|+\int_{\Omega_1}|Du|\\ &=
\int_{-\infty}^{\infty} \text{Per}(\{x\in \Omega: %\setminus\Omega_1: 
\bar u_k(x)=t\}, \Omega)\,dt \\ %+ \int_{\Omega_1}|Du_1|\\
&\leq \diam (\Omega) \left(\max_{\partial \Omega} f-\min_{\partial \Omega} f\right).
%+ \int_{\Omega_1}|Du_1|<\infty.
\end{align*}
%We used here the fact $\text{Per}(\{x\in \Omega: \bar u_n(x)=t\}, \Omega)\le \hbox{diam}\,(\Omega)$ for This follows from the fact that $\bar u_n$ is a solution to (\ref{lg}) and that all sets.}

We will show that the limit, $u$, is a least gradient function.
Let $w\in BV_0(\Omega)$, with compact support in $\Omega$, then the support of $w$ is contained in $\Omega_k$ for all  $k\geq N$, where $N$ depends upon the support of $w$. Obviously,
\begin{align*}
\int_{\Omega} |D(u+w)|=\int_{\Omega\setminus \Omega_k} |D(u+w)|+\int_{\Omega_{k}} |D(u+w)|+\int_{\partial \Omega_k \cap \Omega} |(u+w)^{+}-(u+w)^-|
\end{align*}
By the choice of  $w$, its support is contained in $\Omega_N$, then $|Dw|=0$ in $\Omega\setminus \Omega_N$, and $w^+=w^-=0$ on $L_k$ for $k\ge N$. %with respect to $\partial \Omega_N$. 
Since  we have $u=u_k$ in $\Omega_k$ and $u_k$ is a least gradient function in  $\Omega_k$ so we deduce
$$\int_{\Omega_k} |D(u+w)|=\int_{\Omega_k} |D(u_k+w)|\geq \int_{\Omega_k} |D u_k |=\int_{\Omega_k} |Du|.$$
We conclude that
$$\int_{\Omega} |D(u+w)|\geq \int_{\Omega}|Du|,$$
and therefore $u$ is a least gradient function. By construction $u$ is continuous and $u|_{\partial \Omega}=f$.

Once we proved existence, %\marginpar{it's better\\to leave\\the argument\\as is}
we address the problem of uniqueness of solutions. Due to the continuity of solutions, %\sout{and \AS{and of fat level sets}}, 
the argument is exactly as in the case of Theorem \ref{tm-sk}. 
%In \cite{gorny2}, the author studied the problem of uniqueness of solutions to the least gradient problem understood in the trace sense, i.e. as here. The cases of non-uniqueness are classified there and related to the possibility of different partition of `fat level sets', i.e. level sets with positive Lebesgue measure, and with the possibility of assigning different values there. In case of continuous data and solutions, we do not have any freedom to choose values of solutions on fat level sets. Thus, \cite[Theorem]{gorny2} implies that the solutions we constructed here are, in fact, unique.
\end{proof}

%\begin{theorem}
%Let us suppose that $\Omega$ is convex and $f\in BV(\d\Omega)$. We
%assume that $f$ satisfies the admissibility conditions. Then,
%there exists a solution to the least gradient problem.
%\end{theorem}

\subsection{The case of infinitely many humps}\label{inf hump}
Here, we present  our result if $f$ has   infinitely many humps.
Since a convex polygon may have at most three  acute angles,
%is at most in a closed convex domains, 
then we deduce from Proposition \ref{pr-2.1} that there are at most three sides in $\partial \Omega$ with infinitely many humps. As a result,  without the loss of generality, we may assume that our convex domain $\Omega$ has 
%will prove our result for convex domains having 
one side with infinitely many humps.

\begin{theorem}\label{tm-hun}
Let us suppose that $\Omega$ is a polygonal domain as in Theorem \ref{tm-nsk}
%(it may have an infinite number of sides) 
such that only one side $\ell=[p,q]$ has infinitely many humps $I_i=[a_i,b_i]$, $i\in\bN$, accumulating at $p$. We assume that the humps are denoted in such a manner that $|p-a_i|<|p-b_i|$. Moreover, the boundary datum $f\in C(\d\Omega)$ satisfies  the admissibility conditions (C1) or (C2), as well as the Order Preserving, (\ref{OPC}), and the Data Consistency conditions (\ref{dcc1}-\ref{dcc2}). Finally, we assume that $f\in BV(\d\Omega)$  in addition to continuity. Then, there exists $u\in BV(\Omega)$, a unique solution to the least gradient problem \eqref{lg} and $u\in C(\bar\Omega)$.
\end{theorem}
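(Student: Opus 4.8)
The plan is to exhaust $\Omega$ by an increasing sequence of subdomains that delete the cap near $p$ in which the humps accumulate, solve on each by the theorems already proved, and pass to a consistent limit exactly as in Theorem~\ref{tm-nsk}; the role of the extra hypothesis $f\in BV(\d\Omega)$ is to furnish a uniform bound on $\int|Du_n|$. Since the humps $I_i=[a_i,b_i]$ accumulate only at $p$, outside any ball $B(p,r)$ the side $\ell$ carries only finitely many of them. I would therefore pick $r_n\downarrow 0$, points $x_n\in\ell$ and $y_n\in\d\Omega\setminus\ell$ tending to $p$, cut along the chord $L_n=[x_n,y_n]$, set $\Omega_n=\Omega\setminus H(L_n,p)$, and define $f_n=f$ on the retained part of the boundary and, on $L_n$, a monotone interpolation between $f(x_n)$ and $f(y_n)$ (reducing to the constant $f(x_n)$ when these levels are matched as in Theorem~\ref{tm-nsk}, which is possible since $f$ is continuous and the hump values satisfy $f(a_i)\to f(p)$). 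Each $\Omega_n$ then has only finitely many humps and is of the type covered by Theorem~\ref{tm-nsk} (or by Theorem~\ref{tm-sk} in the case $p=p_0$), so I obtain unique continuous least gradient solutions $u_n$ on $\Omega_n$, with $\Omega_n\subset\Omega_{n+1}\nearrow\Omega$.

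The consistency $u_{n+1}|_{\Omega_n}=u_n$ is propagated as in the proof of Theorem~\ref{tm-nsk}: were the trace of $u_{n+1}$ on $L_n$ to differ from $f_n|_{L_n}$, a component of some $\partial\{u_{n+1}>s\}$ would be forced to cross $L_n$ and leave through $\d\Omega$, contradicting the prescribed structure of $f$; the uniqueness supplied by Theorem~\ref{tm-sk} then upgrades trace agreement on all of $\d\Omega_n$ to the equality $u_{n+1}|_{\Omega_n}=u_n$. This defines a single function $u$ with $u|_{\Omega_n}=u_n$, and since $|\Omega\setminus\Omega_n|\to0$ while the $u_n$ are uniformly bounded by the maximum principle, $u_n\to u$ in $L^1(\Omega)$.

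The hypothesis $f\in BV(\d\Omega)$ enters in the uniform estimate $\int_{\Omega_n}|Du_n|\le C$. By the coarea formula $\int_{\Omega_n}|Du_n|=\int_{\bR}\text{Per}(\{u_n>t\},\Omega_n)\,dt$, and each $\partial\{u_n>t\}\cap\Omega_n$ is a disjoint union of chords, so $\text{Per}(\{u_n>t\},\Omega_n)\le \diam(\Omega)\,m_n(t)$, where $m_n(t)$ is the number of such chords. Since every chord ends at two points of $\d\Omega_n$ where $f_n$ takes the value $t$, the Banach indicatrix formula gives $\int_{\bR}m_n(t)\,dt\le\tfrac12 V_{\d\Omega_n}(f_n)\le\tfrac12 V_{\d\Omega}(f)$, the last inequality because the flattening on $L_n$ does not increase total variation. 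Hence $\int_{\Omega_n}|Du_n|\le\tfrac12\diam(\Omega)\,V_{\d\Omega}(f)$ uniformly in $n$, and lower semicontinuity of the total variation yields $u\in BV(\Omega)$. That $u$ is a least gradient function follows verbatim from Theorem~\ref{tm-nsk}: any competitor $u+w$ has $\supp w\subset\Omega_N$ for some $N$, on which $u=u_N$ already minimizes.

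It remains to identify the trace and to establish $u\in C(\bar\Omega)$, and continuity up to the accumulation point $p$ is the step I expect to be the main obstacle. Away from $p$ the argument of Theorem~\ref{tm-nsk} applies unchanged, so everything reduces to controlling the oscillation of $u$ on $B(p,\rho)\cap\Omega$. The quadrilaterals $Q_i=\conv(a_i,b_i,y_i,z_i)$ attached to the humps (Proposition~\ref{quadrupole}) carry values $f(a_i)\to f(p)$, shrink to $p$, and, by OPC, are pairwise disjoint; together with the monotone envelope of $f$ on the two arcs of $\d\Omega$ issuing from $p$ these level pieces squeeze $u$ between values converging to $f(p)$. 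Making this precise---i.e.\ turning the $BV$ control of the hump heights into a genuine modulus of continuity of $u$ at $p$, for which Lemma~\ref{le2} is unavailable when $p\ne p_0$ because $\Omega_n$ then still has infinitely many sides---is the technical heart of the proof. Once $u\in C(\bar\Omega)$ with $u|_{\d\Omega}=f$ is secured, uniqueness follows, as before, from continuity of the solution and \cite[Theorem~1.1]{gorny2}.
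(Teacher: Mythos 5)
Your overall skeleton (exhaust $\Omega$ by cutting off caps at $p$, solve on each piece by Theorems \ref{tm-sk}/\ref{tm-nsk}, glue by uniqueness, use $f\in BV(\d\Omega)$ for a uniform gradient bound, pass to the limit by lower semicontinuity, and get uniqueness from \cite[Theorem 1.1]{gorny2}) is exactly the paper's strategy, and your Banach-indicatrix estimate $\int_{\Omega_n}|Du_n|\le\tfrac12\diam(\Omega)\,V_{\d\Omega}(f)$ is a correct and arguably cleaner alternative to the paper's annulus-by-annulus coarea computation. However, there are two genuine gaps. First, the cut construction: you allow $L_n=[x_n,y_n]$ with $f(x_n)\ne f(y_n)$ and prescribe a monotone interpolation on $L_n$. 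This breaks the gluing step: the identity $u_{n+1}|_{\Omega_n}=u_n$ follows from uniqueness only after one knows that the trace of $u_{n+1}$ on $L_n$ coincides with the prescribed $f_n|_{L_n}$, and nothing forces the solution of the larger problem to have your interpolation as its trace along an arbitrary chord; the crossing argument you quote from Theorem \ref{tm-nsk} relies on monotonicity of $f$ near the accumulation point, which fails here precisely because the humps accumulate at $p$. The paper cuts only along segments on which the solution is a priori constant: $L_i=[b_i,z_i]$ (or $L_i=[b_i,b_i']$ when $z_i$ falls inside a hump of the other side $\ell'$), where $f(z_i)=f(b_i)$ holds automatically by (\ref{defyz}); then $L_i$ is an edge of the quadrilateral $Q_i=\conv(a_i,b_i,y_i,z_i)$, and Proposition \ref{quadrupole} yields $u_{i+1}|_{L_i}\equiv f(b_i)$. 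Your fallback ``matching is possible since $f$ is continuous'' is not available, since level-matching near $p$ can only be done at hump levels --- which is the paper's choice --- and one must then also verify that $f_n$ still satisfies (C1)/(C2), OPC and DCC on $\Omega_n$ (a substantial part of the paper's proof, which you do not address).

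Second, you explicitly leave the continuity of $u$ at $p$ as an unproved ``technical heart.'' This step is genuinely needed, but it does not require any new modulus-of-continuity estimate in the spirit of Lemma \ref{le2}, nor a conversion of $BV$ bounds into a modulus. The paper closes it in a few lines using the structure you already invoke: if $x_n\to p$, choose $k_n$ with $x_n\in\Omega_{k_n}\setminus\Omega_{k_n-1}$; by Proposition \ref{quadrupole} the value $u(x_n)=u_{k_n}(x_n)$ is attained by $f$ at a boundary point lying in the cap $\d\Omega\setminus\d\Omega_{k_n-1}$, and since the oscillation of $f$ over these shrinking caps tends to $0$ by continuity of $f$ at $p$, it follows that $u(x_n)\to f(p)$. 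With your cuts repaired as above, this argument completes the proof; as written, your proposal stops short of it.
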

\begin{proof}
We stress that the admissibility condition (C2) prevents accumulation of $a_i$ and  $b_i$ in the open interval $(p,q)$. Since  $a_i$ and $b_i$ converge to  vertex  $p$ so do points  $y_i$ and $z_i$ defined  in \eqref{defyz}, independently of their choice. The numbering of points $a_i$'s and $b_i$'s is such that $|a_{i+1} - p| < |a_i -p|.$ We can do this, because the only accumulation point of $a_i$'s and $b_i$'s is $p$. For further analysis, we fix $y_i$ and $z_i$ satisfying the DCC.

We assume that the polygonal arc (and not containing $a_i$) connecting $p$ and $y_i$ is shorter than the arc  connecting $p$ and $z_i$ (and not containing $a_i$). We name the side containing $z_i$ by $\ell'$. If $f$ on $\ell'$ satisfies (C1), then we
set $L_i=[b_i, z_i]$. If $z_i$ belongs to a segment separating humps, then we also set $L_i=[b_i, z_i]$. If  $z_i$ belongs to a hump $J=[a_i',b_i']$, where  the arc connecting $p$ and $a_i'$ is shorter than the arc  connecting $p$ and $b_i'$, then we set $L_i= [b_i, b_i'].$

Subsequently,
%we denote by $H(L_i,p)$  the half plane containing $p$ and whose boundary contains $L_i$. Then,  
we introduce the domain $\Omega_i=\Omega\setminus H(L_i,p)$. As a result, $\Omega_i$ is convex and all its sides have finitely many humps. Let 
$$
f_i(x)=\begin{cases} f(x) \qquad & x\in \partial\Omega\cap \d\Omega_i,\\
f(b_i)\qquad & x\in L_i.
\end{cases}
$$
The definition of $L_i$ guarantees that $f_i$ is continuous. We claim that $f_i$ also satisfies the admissibility conditions. Indeed, $f_i$ is constant, i.e. monotone on $L_i$, so the condition (C1) holds. 

Now, we will check that $f_i$ restricted to $\ell\setminus H(L_i,p)$ satisfies (C2) condition. If $J\subset \ell$ is a hump, then the points given by (\ref{defyz}) must belong to $\d\Omega\setminus H(L_i,p)$, for otherwise the OPC condition would be violated. Moreover,  $b_i$ is a hump endpoint and there is no non-trivial interval  in $\ell$ containing $b_i$ on which $f$ attains a local maximum or minimum. Thus, $f_i$ restricted to $\ell\setminus H(L_i,p)$ satisfies (C2).

Moreover,  $f_i$ on $\ell'\setminus H(L_i,p)$ satisfies (C1) or (C2). Indeed, if $f$ restricted to $\ell'$ fulfills (C1) so does its restriction $f_i$ to a subinterval. 
Suppose now, $f_i$ on $\ell'$ satisfies (C2). If  $z_i$ belongs to the segment separating humps contained in $\ell'$, then by the argument, as in the previous paragraph, we conclude that $f_i$ restricted to $\ell'\setminus H(L_i,p)$ satisfies (C2).  In this case, by the definition of  $L_i$ and OPC, interval $L_i$ may not be intersected by any other interval  of the form $[\bar a, \bar y]$, $[\bar b, \bar z]$.

Finally, we consider the case when $z_i$ belongs to a hump $J=[a_i',b_i']$. We took $L_i = [b_i, b_i']$. If this happens, we invoke DCC to see that $f_i|_{[b_i,a_{i-1}]}$ and  $f_i|_{[b_i',a_{i-1}']}$ have the same type of monotonicity. Furthermore, by OPC, no segment of the form $[\bar a, \bar y]$, $[\bar b, \bar z]$,
may  intersect $L_i$. Here, $\bar a$, $\bar b$ are hump endpoints and $\bar a, \bar b, \bar y, \bar z\in \ell \cup \ell'$. This also shows that OPC holds for $f_i$ too. Thus, $f_i$ restricted to $\ell'$ satisfies (C2) as well as the OPC.

The construction we performed preserves DCC, because we do not change the structure of the local extrema.

The reasoning above leads to a conclusion that $\Omega_i$ and $f_i$ satisfy the assumptions of Theorem \ref{tm-nsk}. We  may invoke it to deduce existence of $u_i$ a unique solution to (\ref{lg}) in $\Omega_i$.
%order preserving and data consistencyAs a result, there exist $u_i \in C(\bar\Omega_i)$ a least gradient function with trace $f_i$.  

We claim that $u_{i+1}$ restricted to $\Omega_i$ equals $u_i$. Since $u_{i+1}$ is a
least gradient function, so is its restriction to $\Omega_i$, see \cite[Proposition 4.1]{grs}. Thus,
%$u_i$ and are s, then 
by the uniqueness part of Theorem \ref{tm-sk} or Theorem \ref{tm-nsk} it is sufficient to check that  $u_i$ and $u_{i+1}|_{\Omega_i}$ have the same trace on $\d\Omega_i$. In fact, it is necessary to see that
$$
u_{i+1}|_{L_i} = u_i |_{L_i}\equiv f(b_i).
$$
We may apply Proposition \ref{quadrupole} in case of all our definitions of $L_i$, to deduce that $u_{i+1}|_{L_i} =  f(b_i)$. Our claim follows.
%proceed as in the proof of Theorem \ref{tm-nsk}. We assume that there exists $\bar x\in (b_i, z_i)$ such that $f_{i+1}(\bar x) \neq f(b_i)$. We first consider $\max\{ u_{i+1}(x): \ x\in L_i\}> f(b_i)$. However, if this happened, then we would face the violation of the Ordering Preservation Condition, (\ref{OPC}), by the data $f$. This is not possible. 
%Similarly, $\min\{ u_{i+1}(x): \ x\in L_i\}< f(b_i)$ is not possible.

We may now define 
$$v_i(x)=\begin{cases}
u_i(x) \qquad & x \in \Omega_i,\\
f(b_i) & x\in \Omega\setminus \Omega_i.
\end{cases}
$$
Since for $k>i$, we have
$v_k|_{\Omega_i} = u_i$, then we deduce that
$$v(x) = \lim_{i\to\infty} v_i(x)
$$ 
exists for all $x\in\Omega$. Moreover, for any compact set $K\subset \bR^2$ not containing $p$, the convergence in $\Omega\cap K$ is uniform. Since  $v_i$ are  continuous, so is the limit $v$. The uniform convergence implies that $Tu_i\to f$ on $\d\Omega\cap K$ as $i\to \infty.$ 

We claim that $v$ is continuous at $p$. We take any sequence $\{x_n\}_{n=1}^\infty\subset \Omega$ converging to $p$. By the definition of $\Omega_k$ for any $x_n$, we can find $k_n$ such that
\begin{equation}\label{qsta}
x_n \in \Omega_{k_n} \setminus \Omega_{k_n-1}. 
\end{equation}
Due to Proposition \ref{quadrupole}, we know that $Q(a_k,b_k,y_k,z_k)$ is contained in
$\{u\ge f(b_k)\} \supset \{u_k\ge f(b_k)\} $. Hence, if $x_n$ satisfies (\ref{qsta}), then 
$u(x_n) = u_{k_n}(x_n) = f(y_n),$ where $y_n \in \d\Omega\setminus \d\Omega_{k_n-1}.$ Since,
$$
\lim_{n\to\infty}| \max\{ f(y): \ y \in \d\Omega\setminus \d\Omega_{k_n-1}\} - 
\min \{ f(y): \ y \in \d\Omega\setminus \d\Omega_{k_n-1}\}  | =0,
$$
we deduce that 
$$
\lim_{n\to\infty} u(x_n) = f(p).
$$
%$\epsilon>0$, a ball $B(p,\delta)$ and $x\in B(p,\delta)$. Then, such $x$ belongs either to $Q(a_i, y_i, b_i,z_i)$ or to $Q(b_i,z_i, a_i, y_i)$, where 
%$$
%Q(a,y, b,z):= (H([b,z],p)\setminus H([a,y],p))\cap \Omega.
%$$
%If $x\in Q(a_i, y_i, b_i,z_i)$, then $v(x)= u_i(x) = f(b_{i-1})$,
%if  $x\in Q(b_i,z_i, a_i, y_i)$, then 
%$$
%v(x) \in [\min\{f(a_i), f(b_i)\}, \max\{f(a_i), f(b_i)\}].
%$$
%Thus, for sufficiently small $\delta$ we have $|f(p) - f(x)|<\epsilon$.

Now, we claim that $v\in BV(\Omega)$. 
%The proof of this fact uses that $f\in BV(\ell)$. 
%We may assume that humps $[a_i,b_i]$ on $\ell$ are so ordered that
%$$
%\dist(a_i, p) > \dist(a_{i+1}, p), \qquad \dist(b_i, p) > \dist(b_{i+1}, p).
%$$
%We know that $a_i, b_i \to p$.
We write  $T_i = H(L_i,p)\cap \Omega,$ hence $\Omega = T_i \cup \Omega_i$.  Due to the continuity of $v$ in $\Omega$, for any $i\in\bN$, we have
$$
\int_\Omega |Dv| = \int_{\Omega\setminus T_i} |Dv| + \int_{T_i} |Dv| =
\int_{\Omega\setminus T_i} |Dv_i| + \int_{T_i} |Dv| .
$$
Thus, in order to establish our claim, it suffices to see that
$$
 \int_{T_1} |Dv| <\infty .
$$
Continuity of $v$ implies 
$$
\int_{T_i} |Dv|  = \int_{T_i\setminus T_{i+1}} |Dv_{i+1}| + \int_{T_{i+1}} |Dv| .
$$
Since we have $T_1 = \bigcup_{i=1}^\infty (T_i\setminus T_{i+1})$, then
$$
 \int_{T_1} |Dv| = \sum_{i=1}^\infty \int_{T_i\setminus T_{i+1}} |Dv_{i+1}|.
$$
We will estimate $\int_{T_i\setminus T_{i+1}} |Dv|$ by the co-area formula while using  monotonicity of $f$ on $[b_{i+1}, b_i]$. First, we set $M_i:=\max \{f(b_{i+1}), f(b_i)\}$, $m_i:= \min \{f(b_{i+1}), f(b_i)\}$ and
$$
\mathcal{D}_l^i =  \{x\in T_i\setminus T_{i+1}: v_{i+1}(x)< m_i\},\qquad
\mathcal{D}_u^i = \{x\in T_i\setminus T_{i+1}: v_{i+1}(x)> M_i\},
$$
$$
\mathcal{D}_o^i = \{x\in T_i\setminus T_{i+1}:v_{i+1}(x)\in[m_i,M_i]\}.
$$
We note,
$$
\int_{T_i\setminus T_{i+1}} |Dv| = \int_{\mathcal{D}_u^i} |Dv| + \int_{\mathcal{D}_o^i} |Dv| + \int_{\mathcal{D}_l^i} |Dv|.
$$
We will use the DCC to estimate the first and the last integral on the right-hand-side. If $f$ attains maximum (resp. minimum) on $[a_i,b_i]$, then $\max_{\overline{y_iz_i}_{a_ib_i}}\ge\min_{\overline{y_iz_i}_{a_ib_i}}\ge f(b_i)$ (resp. $\min_{\overline{y_iz_i}_{a_ib_i}}\le \max_{\overline{y_iz_i}_{a_ib_i}}\le f(b_i)$. As a result, if
$$
\{ v_{i+1} = t\} \subset \mathcal{D}_u^i \qquad \hbox{(resp. }
\{ v_i = t\} \subset \mathcal{D}_l^i \hbox{),}
$$
then 
$$
\cH^1 (\{ v_i = t\}\cap( T_i \setminus\bar T_{i+1})) \le  \diam \Omega
%\cH^1( \overline{y_iz_i}_{a_ib_i})
\qquad\hbox{for a.e. }t,
$$
because for large $i$ there is just one component of $\d \{v_i>t\}$. This is so due to the monotonicity of $f$ on $[b_{i+1}, a_i]$.
This observation combined with the coarea formula yields,
$$
\int_{\mathcal{D}_o^i} |Dv| = \int_{m_i}^{M_i}\hbox{Per}(\{v \ge  t\},  T_i \setminus \bar T_{i+1})\,dt \le 
 \diam \Omega |M_i - m_i|.
$$
%because for a.e. $t\in [\max \{f(b_{i+1}), f(b_i)\}, \min \{f(b_{i+1}), f(b_i)\}]$ we have $\hbox{Per}(\{v= t\},  \mathcal{D}_o^i) \le \diam \Omega$. Indeed, implies that $\partial E_t$ may consist of a single component. 
Moreover,
$$
\int_{\mathcal{D}_u^i} |Dv| = \int_{M_i}^\infty \hbox{Per}(\{v\ge   t\}, T_i \setminus \bar T_{i+1})\,dt \le
(\max_{\overline{y_iz_i}_{a_ib_i}} f - M_i) \diam \Omega
%\cH^1(\overline{y_iz_i}_{a_ib_i})
$$
because the $\cH^1$ measure  of any set $\d\{v \ge  t\}$ may not exceed the measure of $\diam \Omega$. % \overline{y_iz_i}_{a_ib_i}$. 
The same reasoning yields 
$$
\int_{\mathcal{D}_l^i} |Dv| = \int^{m_i}_{-\infty} \hbox{Per}(\{v \ge  t\}, \mathcal{D}_l^i)\,dt \le
( m_i - \min_{\overline{y_iz_i}_{a_ib_i}} f) \diam \Omega. %\cH^1(\overline{y_iz_i}_{a_ib_i}).
$$
Since $M_i - m_i = % \max \{f(b_{i+1}), f(b_i)\} - \min \{f(b_{i+1}), f(b_i)\} =
|f(b_{i+1}) - f(b_i)|$ and  
$\max_{\overline{y_iz_i}_{a_ib_i}} f -M_i = f(\zeta_i) - f(z_i)$, as well as
$m_i - \min_{\overline{y_iz_i}_{a_ib_i}} f = f(\xi_i) - f(z_i).$
thus,
$$
\int_{T_i\setminus T_{i+1}} |Dv| \le \diam \Omega (| f(b_{i+1}) - f(b_i)| +  |f(c_i) - f(z_i)|)
$$
where $c_i = \xi_i$ or $c_i = \zeta_i$.
%\int_{\min \{f(b_{i+1}), f(b_i)\}}^{\max \{f(b_{i+1}), f(b_i)\}}
%\hbox{Per}(\{v= t\},T_i\setminus T_{i+1})\,dt.
%$$
Since the set
$\{t\in \bR: \ |\{v \ge  t\}|> 0\}$ has zero Lebesgue measure, we deduce that
%As a result,
%$$
%\int_{T_i\setminus T_{i+1}} |Dv| \le 
%\diam \Omega()
%= \diam \Omega | f(b_{i+1}) - f(b_i)|.
%$$
%Finally, we notice
$$
 \int_{T_1} |Dv| \le  \diam \Omega \left(\sum_{i=1}^\infty| f(b_{i+1}) - f(b_i)|  + 
 \sum_{i=1}^\infty| f(c_i) - f(z_i)|\right)
 \le 2 \diam \Omega\, TV(f)<\infty. 
$$
The proof that $v$ is a least gradient function is exactly as in the proof of Theorem \ref{tm-nsk}. Since we have already established that $v$ has the desired trace we conclude that $v$ is a solution to (\ref{lg}). The uniqueness is shown exactly in the same manner in Theorem \ref{tm-sk} or Theorem \ref{tm-nsk}.
\end{proof}

We stress that the above proof does not make any use of the number of sides of $\d\Omega$. Thus, is is valid also if their number is infinite.

\section{Examples}\label{examples}
We present a few examples showing how our theory applies. We define $\Omega = (-L,L)\times (-1,1)$, where $L>2$ and the function
$g:(-L,L)\to \bR$ as follows $g(x) = L^2 -x^2$. Furthermore, we set
$f:\d\Omega\to \bR$  to be $f(x_1, \pm 1) = g(x_1)$ for $|x_1|\le 1$ and $f(\pm L, x_2) =0$ for $|x_2|\le 1$. We take $\lambda>0$. 

Here is the first example.
We introduce $f_\lambda(x_1, x_2) = \min\{ f(x_1, x_2), g(L-\lambda)\}.$ In the examples we present, $\lambda$ is the distance of the level set $\{f = g(L-\lambda)\}$ to each corner of $\Omega$. We have two humps $[ a^-, b^-]$ and $[a^+b^+]$ , where
$$
a^- = (-L+\lambda,-1),\quad b^-=( L- \lambda, -1)\qquad
a^+ = (-L+\lambda,1),\quad b^+=( L- \lambda, 1).
$$ 
The corresponding points $y^-, z^-$ respectively, $y^+$, $z^+$) are
$y^- = a^+$, $z^-=b^+$, (respectively, $y^+ = a^-$, $z^+=b^-$). 
%It turns out that $[y,z]$ is a hump. 
Moreover, 
$$
|b^--a^-| = 2(L- \lambda), \qquad |a^--y^-| = 2 = |b^- -z^-|
$$
and the condition (\ref{A}), equivalent to the (C2) condition, reads
\begin{equation}\label{exC2}
    2 < L-\lambda .
\end{equation}
Now, we state our observations.
\begin{cor}\label{co-1}
 If $\Omega$ and $f_\lambda$ are defined above, then:\\
 (a) If $\lambda\in (0, L-2)$, then the admissibility condition (C2) holds and $u_\lambda$, a solution to (\ref{lg}), is given by the following formula,
 \begin{equation}\label{er1}
u_\lambda(x_1,x_2) = f_\lambda(x_1,1).     
 \end{equation}
 (b) If $\lambda\in [L-2, L-1)$, then the admissibility condition (C2) 
 is violated, but there is a unique solution to (\ref{lg}), which is given by (\ref{er1}).\\
 (c) If $\lambda=L-1$, then $u$ given below is  a solution to (\ref{lg}),
 $$
 u(x_1,x_2) = f_{L-1}(x_1,1).
 $$
 (d) If $\lambda> L-1$, then  the admissibility condition (C2) 
 is violated and there is no solution  to (\ref{lg}).
\end{cor}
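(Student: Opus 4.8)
The plan is to reduce all four regimes to a single rigidity observation: in a convex planar domain the boundary of every super-level set of a least gradient function is a union of straight segments, and for the present top–bottom symmetric datum the four boundary points carrying a value $t\in(0,g(L-\lambda))$ (two on the top side, two on the bottom side, at $x_1=\pm\sqrt{L^2-t}$) admit only one non-crossing straight pairing, namely the \emph{vertical} one joining each top point to the point directly below it. Hence any solution is forced to coincide with the profile $u_\lambda(x_1,x_2)=f_\lambda(x_1,1)$, and the whole statement becomes the question of whether this forced candidate is genuinely of least gradient. First I would record the bookkeeping: the cap value is $g(L-\lambda)=L^2-(L-\lambda)^2$, the humps $[a^\pm,b^\pm]$ have the vertical connectors $[a^-,a^+]$ and $[b^-,b^+]$ (each of length $2$), and condition (C2) is exactly \eqref{exC2}, i.e. $2<L-\lambda$. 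I would then verify the remaining admissibility requirements: the arc $\overline{y^-z^-}_{a^-b^-}$ is the top flat segment $[a^+,b^+]$, on which $f\equiv g(L-\lambda)$, so \eqref{dcc1} holds (with equality), strict monotonicity of the parabola gives \eqref{dcc2}, and the OPC is the degenerate situation in which the two equal-level humps share one fat level set.

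For part (a), $\lambda\in(0,L-2)$ gives $L-\lambda>2$, so (C2) holds and Theorem \ref{tm-sk} yields existence and uniqueness; Proposition \ref{quadrupole} then places the rectangle $\conv(a^-,b^-,b^+,a^+)$ inside $\{u\ge g(L-\lambda)\}$, and the monotone (C1) data off the humps together with the top–bottom symmetry force vertical level segments, identifying $u$ with $u_\lambda$ and establishing \eqref{er1}. For part (b), $\lambda\in[L-2,L-1)$ gives $1<L-\lambda\le2$, so (C2) fails and Theorem \ref{tm-sk} is unavailable; instead I would check \eqref{er1} directly. Its super-level sets are the strips $\{|x_1|\le\sqrt{L^2-t}\}$, whose interior boundary is two vertical segments of total length $4$; the only competing configuration is a pair of shallow ``slivers'' hanging from the top and bottom flats, of total length tending to $4\sqrt{L^2-t}$, so the verticals are length minimizing precisely when $\sqrt{L^2-t}\ge1$. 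Since $\sqrt{L^2-t}\ge\sqrt{L^2-g(L-\lambda)}=L-\lambda>1$ throughout, every super-level boundary is minimal, $u_\lambda$ is of least gradient with the correct trace, and uniqueness follows either from the forced vertical pairing or from continuity via \cite{gorny2}.

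Part (c) is the borderline $L-\lambda=1$: the minimality comparison becomes an equality at the top level, so the verticals remain (non-strict) minimizers and $u=f_{L-1}(x_1,1)$ is still a solution, while the equality is exactly the signal that uniqueness may be lost, which is why only existence is asserted. The genuine work is part (d). Here $L-\lambda<1$, hence $g(L-\lambda)>L^2-1$, and for every $t\in(L^2-1,g(L-\lambda)]$ the forced vertical level set has width $2\sqrt{L^2-t}<2$; its vertical boundary has length $4$, strictly larger than the sliver infimum $4\sqrt{L^2-t}<4$, so it is \emph{not} area minimizing. Since any least gradient solution must be $u_\lambda$, and $u_\lambda$ fails minimality on a whole band of levels while the competing minimal configuration degenerates (its infimal perimeter is not attained by any positive-measure set and cannot be realized by straight interior segments), no least gradient function can carry the trace $f_\lambda$; concretely, a minimizing sequence loses the mass pinned at the flat parts and converges to a function with the wrong trace. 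Thus \eqref{lg} has no solution.

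I expect the main obstacle to be precisely this non-existence argument in (d): one must simultaneously prove that the trace rigidly forces the vertical pairing, so that $u_\lambda$ is the \emph{only} candidate, and then show that this candidate is not of least gradient because the truly perimeter-minimizing competitor is a pair of degenerate slivers whose infimal perimeter is unattained. Making the second half rigorous — ruling out every positive-measure minimizer and every alternative single-valued profile, and pinning down how a minimizing sequence sheds the boundary mass — is the delicate step; the remaining regimes are comparatively routine once the vertical-rigidity observation and the length comparison $4$ versus $4\sqrt{L^2-t}$ are in place.
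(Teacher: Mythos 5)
Your core mechanism is the one the paper itself relies on: any putative solution has superlevel boundaries made of non-crossing straight chords ending in $f_\lambda^{-1}(t)$, which forces the vertical pairing, and everything is then decided by the length comparison $4$ versus $4\sqrt{L^2-t}$, i.e.\ by whether $\sqrt{L^2-t}\ge 1$ on the range of the data. For part (d) your argument (rigidity forces $u_\lambda$; the sliver/cut-out competitor beats it on the band $t\in(L^2-1,\,g(L-\lambda)]$; hence no solution) is exactly the paper's proof by contradiction. For parts (a)--(c) you genuinely diverge: the paper obtains (a) from the approximation scheme behind Theorem \ref{tm-sk} (solutions $v_n$ on the strictly convex $\Omega_n$ have vertical level sets, then pass to the limit), obtains (b) by rerunning that construction and noting that verticals still win in $\Omega_n$ even though (C2) fails, and obtains (c) as a uniform limit of the solutions of (b) as $\lambda\to L-1$, invoking Miranda's theorem \cite{miranda}; you instead verify directly that every superlevel strip of $u_\lambda$ is area-minimizing whenever $\sqrt{L^2-t}\ge 1$, and conclude that $u_\lambda$ is of least gradient with the correct trace. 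Your route is more self-contained and treats (a)--(c) uniformly; the paper's route reuses machinery already built and, in (c), sidesteps any discussion of the borderline equality.

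Two caveats. First, in (a) you justify the appeal to Theorem \ref{tm-sk} by asserting that OPC and DCC hold ``degenerately''. As literally stated, OPC does \emph{not} hold here: the two humps sit at the same level $g(L-\lambda)$, the connecting segments for the bottom hump are $[a^-,a^+]$ and $[b^-,b^+]$, and those for the top hump are the very same segments, so the disjointness required in (\ref{OPC}) fails (and (\ref{dcc1}) holds only with equality). This is presumably why the corollary claims only (C2) in part (a), and why the paper's proof builds the solution directly from the $\Omega_n$ rather than citing Theorem \ref{tm-sk} as a black box. The lapse is harmless in your proposal only because your direct verification in (b) applies verbatim when $L-\lambda>2$; but the reduction of (a) to Theorem \ref{tm-sk}, as written, is not justified. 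Second, to make ``every superlevel set is minimal, hence $u_\lambda$ is of least gradient, hence it solves (\ref{lg})'' airtight you need two standard facts you leave implicit: the coarea argument showing that minimality of all superlevel sets yields the least gradient property, and the gluing argument showing that a least gradient function whose trace equals $f_\lambda$ minimizes among \emph{all} competitors with that trace (not just compactly supported perturbations). Both are standard, and the paper's own proof is no more explicit, so I would not count this second point as a gap.
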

\begin{proof}
Part (a). We have already checked that (\ref{exC2}) is equivalent to
the admissibility condition (C2), hence (C2) holds. The formula for
$u_\lambda$ is easy to find after discovering solutions in
$\Omega_n$. Finally, we notice that $u_{L-1}$ is a uniform  limit of
$u_\lambda$ as $\lambda$ goes to $L-1$. We use here the fact that an $L^1$
limit of least gradient functions is of least gradient. Moreover, the uniform convergence of $u_\lambda$ implies that
the limit has the right trace. 

Part (b) follows from the construction performed in the course of proof of Theorem \ref{tm-sk}. We notice that if $\Omega_n$ are strictly convex regions, then even if (C2) is violated, then all the sets $\d\{ u_n\ge t\}$ are vertical segments. This is so because any competitor, $v$, with horizontal boundaries of the level sets has larger $\int_\Omega |Dv|$ due to $\dist(\alpha_n,y_n)< \dist(\alpha_n,\beta_n)$ and the coarea formula.

Part (c) follows from (b) after taking a limit as $\lambda$ goes to $L-1$. By the construction the sequence $u_\lambda$ converges uniformly.

Part (d) is proved by contradiction. Let us assume that a solution, $u$, actually exists. Then, $\d\{ u \ge t\}\cap \d\Omega \subset f^{-1}(t)$ for a.e. $t$, this follows from \cite[Lemma 3.3]{sternberg}. We take such  $t\in (L-1, \lambda)$  and we consider 
$$
\cH^1(\{(x, g(x)): x\in [-t,t]\}) =: l(t).
$$
We can find $t_0$ such that $l(t_0)$ is smaller than $\lambda$ for all $t>t_0$. We construct $v$, so that $\cH^1(\{ v = t\}) = l(t)$, for $t>t_0$, but $v$ has the desired trace. Thus, by the co-area formula $\int_\Omega |Du| > \int_\Omega |Dv|$. Since the level set structure of solutions is predetermined and we have found a cheaper competitor, we infer there is no solution to (\ref{lg}) in $\Omega.$
\end{proof}

The above corollary shows that, depending upon the geometry of the level sets of solutions, (C2) need not be optimal, i.e. there may be solutions if it is violated. See also Example \ref{ex4.4}.
%Its severe violation, line part (d) above, may lead to non-existence.}

%\begin{proposition}
%In all the cases above, when the admissibility conditions are
%violated, i.e. in Corollary \ref{co-1} (b) there is no solution to the least gradient problem.
%\end{proposition}
%\begin{proof}
%Let us suppose otherwise. Then, we notice that for almost every
%$x_2\in (-1,1)$ the function $u(\cdot, x_2)$ belongs to
%$BV(-L,L)$. Indeed,
%$$
%\int_{-1}^1\int_{-L}^L | D_{x_1} u(x_1, x_2)| \,dx_2 \le 
%\int_{-1}^1\int_{-L}^L | D u(x_1, x_2)| \le M.
%$$
%In particular  $u(\cdot, x_2)$ has a trace at $x_1 =L$ for
%a.e. $x_2\in (-1,1)$. In other words, for any $x_n$ increasing sequence
%converging to $L$, we have 
%$$
%\lim_{n\to\infty} u(x_n, x_2) = \mu.
%$$
%We set $t_n = u(x_n, x_2)$ and consider 
%$$
%E_{t_n} = \{ (x_1, x_2) :\  u(x_1, x_2)\ge t_n\}
%$$
%for values $t_n\in (2L, \mu)$. Since $\partial E_{t_n}$ is a minimal
%surface, then it must be a line segment, which must intersect
%$\partial\Omega$. %\marginpar{details?}
%We set $\{a_n, b_n\} = \partial E_{t_n} \cap  \partial \Omega.$
%Since $v_\mu$ does not attain any values in the interval $(2L, \mu)$,
%we deduce that $a_n, b_n$ must belong to $\partial\Omega \setminus
%\{(L, \pm 1)\}$. But this implies that all $\partial E_{t_n}$ are
%contained in $(L, x_2): \ |x_2| \le 1\}$, i.e. in the boundary of
%$\Omega$, contrary to the assumptions that $(x_n,x_2)\in \Omega$. We
%reached a contradiction. Our claim follows.
%\end{proof}

\begin{exe}\label{ex4.2} We shall see that violation of the OPC leads to nonexistence of solutions.
Let us consider $\Omega$ as above. We set
$$
g(x) = 
\begin{cases}
x + L-1 & x\in[-L, -L+2),\\
1 & |x| \le L-2,\\
L-1-x & x\in (L-2,L].
\end{cases}
$$
and 
$$
f(x_1,x_2)= 
\begin{cases}
g(x_1) & |x_1| \le L, \ x_2 =1,\\
- g(x_1) & |x_1| \le L, \ x_2 =-1,\\
x_2 & x_1 = L,\ |x_2| \le 1,\\
-x_2 & x_1 = -L,\ |x_2| \le 1.\\
\end{cases}
$$
Then, there are two humps, $I_{-1}=[a_{-1}, b_{-1}]$
and $I_{+1}=[a_{+1}, b_{+1}]$, where
$$
a_{-1} = (2-L,-1),\ b_{-1}= (L-2,-1),\qquad 
a_{+1} = (2-L,1),\ b_{+1}= (L-2,1).
$$
We notice that if $2+2\sqrt2<L$, then $f$ satisfies (C2).
We also find that 
$$y_{-1} =(-L,1),\ z_{-1} =(L,1),\qquad
y_{+1}=(-L,-1),\ z_{+1}=(L,-1).
$$
We see that  $[a_{-1},y_{-1}]\cap [a_{+1},y_{+1}]\neq\emptyset$ and 
 $[b_{-1},z_{-1}]\cap [b_{+1},z_{+1}]\neq\emptyset$. Hence, the OPC is violated. Since the candidates for the level sets cross, there is no solution to (\ref{lg}).\qed
\end{exe}

\begin{exe}\label{ex4.3}
Now, we show that violation of DCC may lead to non-existence. We define 
$$
\Omega_1 = \hbox{int}\,(\Omega \cup \conv(C,D,V)),
$$
where 
$$
C= (-L,1),\ D(L,1),\ V =(0,1+\alpha),
$$
where $\alpha>0.$ We also set $\beta = \dist(D,V) = \sqrt{L^2 + \alpha^2},$ $S_1 =[C,V]$, $S_2 =[ D,V]$. We define the boundary data,
$$
f(x_1,x_2) = \begin{cases}
g(x_1) & |x_1|\le L, \ x_2 =-1,\\
x_2 & |x_2|\le  1, \ |x_1| = L,\\
\frac 2\beta \dist((x_1,x_2), V) -1& (x_1,x_2) \in S_1\cup S_2.
\end{cases}
$$
Obviously, this boundary function does not satisfy DCC, but (C2) holds for $2+2\sqrt2<L$.
The data has only one hump $[a,b]$, where $a = (2-L, -1)$, $b=(L-2,-1)$ and $y = C$ and $z=D$.

We claim that the problem (\ref{lg}) with this data has no solution. Let us suppose the contrary and that $u$ is a solution. In this case, $[a,y]$ and $[b,z]$ are contained in the level set $\{u \ge 1\}.$ This is so, because otherwise there would be $t<1$ such that there would be a component of $\d\{u \ge t\}$ connecting points in $B(D,\epsilon)$ for small $\epsilon.$ As a result, points $A^t$ close to $a$ and $B^t$ close to $b$ and such that $f(A^t) = f(B^t) =t$ must be connected by a component of $\d\{u \ge t\}$. However, this implies that $[A^t, B^t] \subset [a,b] \subset \d\Omega,$ but this is not possible for any functions $u$ with trace $f$.

Furthermore, if $[a,y]$ and $[b,z]$ are contained in $\{u =1\}$, then we will construct $v\in BV(\Omega_1)$ with the same trace, but $|Du|(\Omega)> |Dv|(\Omega).$ Let us fix $\epsilon>$ and take any $t\in(1-\epsilon, 1).$ We will modify $u$ in $\{u>t %1-\epsilon
\}=:\mathcal{D}.$ We take points  $z^t_-, z^t_+\in B(z,\delta)$ for small $\delta$ and their symmetric images with respect to the $x_2$-axis $y^t_-, y^t_+\in B(z,\delta)$ and such that $f(z^t_\pm) = f(y^t_\pm) =t$.  

We take points, $A^t,$ $B^t\in \d\Omega$, such that $A^t\in B(a,\delta)$, $B^t\in B(b,\delta)$ and $f(A^t)=f(B^t) =t$.
We can find a $C^1$ curve $c\subset \Omega$ connecting $A^t$ and $B^t$, such that $\cH^1(c) < \dist(A^t, B^t) + \epsilon$. We define $\mathcal{D}_1$ to be a region bounded by $[A^t, B^t]$ and $c$.

By the general trace theory of $BV$ functions, we can find a function $h\in W^{1,1}(\mathcal{D}_1)$, such that $h= t$ on $c$ and the trace of $h$ on $[A^{t}, B^{t}]$  is $f$ and $\| \nabla h \|_{L^1} \le (1-t) |[A^{t} -B^{t}| + \epsilon$, see \cite[Lemma 5.5]{anzellotti}. 

We set $\mathcal{D}_2^+= \{ u\ge 1-\epsilon\}\cap B(z,\delta)$ for an appropriately small $\delta$ and $\mathcal{D}_2^-$ is its symmetric image with respect to the $x_2$-axis. In $\mathcal{D}_2^+$, we define $v(x) =t$ for $x\in [z^t_+, z^t_-]$ and similarly in $\mathcal{D}_2^-$. Finally, we set $v = 1-\epsilon$ on $\mathcal{D} \setminus(\mathcal{D}_1 \cup \mathcal{D}_2^+ \cup \mathcal{D}_2^-).$ 
Now, it is easy to see by using the co-area formula that $|Dv| (\Omega) < |Du|(\Omega).$\qed
\end{exe}

We show that for certain regions the (C2) is optimal, i.e. its violation leads to non-existence.

\begin{exe}\label{ex4.4}
We set $\Omega_2 = \conv(A,B,V)$, where $A= (-L,0),$ $B=(L,0)$, $V= (0,\gamma).$ We set
$$
g(x)=
\begin{cases}
\frac{x+L}{L-\alpha} & x\in [-L,-\alpha],\\
1 & |x| \le \alpha,\\
 \frac{L-x}{L-\alpha} & x\in [\alpha,L].
\end{cases}
$$
We take any $h$  monotone decreasing function, such that $h(\dist(V, p)) = 1$, where $p\in \d\Omega_2\setminus[A,B]$ and $p_1 =\pm\alpha$. Moreover, $h(\dist(A,V)) =0.$ The only hump is $[a,b]$, where $a=(-\alpha,0)$, $b=(\alpha,0)$.
We assume that here (C2) is violated i.e.
$$
2\dist(a,p) > \dist(a,b).
$$
Let 
$$
f(x_1,x_2)=
\begin{cases}
g(x_1) & (x_1,x_2) \in [A,B],\\
h(\dist((x_1,x_2), V)) & (x_1,x_2) \in [A,V]\cup [B,V].
\end{cases}
$$
We argue that if there is a solution $u$ with the trace $f$, then $\conv(a,b,p, Sp)$ must be contained in $\{u\ge 1\}$, here $S$ is the symmetry with respect to the $x_2$-axis. We argue as in Example \ref{ex4.3}. We modify $u$ on $\{u\ge 1-\epsilon\}$ for sufficiently small $\epsilon.$ We can find an arc $\mathcal{C}\subset \Omega_2$ connecting $(-\alpha-\epsilon,0)$ with $(\alpha+\epsilon, 0)$ and such that $\mathcal{H}^1(\mathcal{C}) \le 2\alpha + 4\epsilon = \dist(a,b) + 2\epsilon.$ We can connect $(\alpha+\epsilon,0)$ to a point $z^\epsilon\in [B,V]$ (and symmetrically $(-\alpha-\epsilon,0)$ to a point $y^\epsilon\in [A,V]$) in a such a way that
$$
\dist(a,b) < \dist(a,y) + \dist(b,z)
$$
implying that 
$$
\mathcal{H}^1(\mathcal{C}) \le  \dist(a^\epsilon,y^\epsilon) + \dist(b^\epsilon,z^\epsilon.)
$$
Hence, the competitor $v$ has the same trace but $|Du|(\Omega_2) > |Dv| (\Omega_2).$ \qed
\end{exe}

Finally, we construct a region $\Omega$ and a continuous function on its boundary with infinitely many humps. 

\begin{exe}
Let $L_1>0$ be given, we take any $\alpha\in (0, \frac\pi2)$ and we take any $R> L_1/\sin(\alpha/2)$.
We define %$\Omega$ to be bounded by the following curves, 
$\ell_1 = [0, R]\times \{0\}$, $\ell_2$ to be  a line segment of length $R$ forming an angle $\alpha$ at the origin. Moreover, 
$$
\Omega = \hbox{int}\,\hbox{conv}\,(\ell_1, \ell_2).
$$
We will call by $\ell_3$ the third side of triangle $\Omega$.
%The third arc, $\cC$,  is a part of a half-circle with radius $r= \sqrt 2 L_1 \sqrt{1-\cos \alpha}$. 

We define the sequence $L_k$ as follows 
$$
L_{2k+1} = L_1 \prod_{i=1}^k \left( 
\frac{(1-\sin \alpha)^2}{1 + \sin \alpha} - \varepsilon_i \right),
\qquad L_{2k} = L_{2k-1} \frac{ 1-\sin \alpha}{1 + \sin \alpha},\quad k\ge 1,
$$
where $0<\varepsilon_i$ is decreasing to zero and $\varepsilon_1 < \frac 12 \frac{(1-\sin \alpha)^2}{1 + \sin \alpha}$.

We  denote $a_k := L_{2k}$, $b_k = L_{2k-1}$, and define $f$ on $\ell_1$ by setting $f(x) = \frac{(-1)}{k}^{k+1}$ for $x\in (a_k, b_k)$, $k\ge1$. We extend $f$ to  $\ell_1\setminus \bigcup_{k=1}^\infty (a_k, b_k)$ by linear functions.

Let us denote by $\pi$ the orthogonal projection onto the line containing  $\ell_2$ and
$$
a_k' := \pi (a_k,0), \qquad b_k' := \pi ( b_k,0).
$$
We set  $f(x) = \frac{(-1)}{k}^{k+1}$ for $x\in (a_k', b_k')$, $k\ge1$ and define $f$ on $\ell_3$ to be equal to 1. 
We extend $f$ to  $\ell_2\setminus \bigcup_{k=1}^\infty (a_k, b_k)$ to be a continuous piecewise  linear function.

It is easy to  see that we have just proved the following fact:
%\begin{proposition}
Let us suppose that $\Omega$ is given above. Then, function $f$ constructed above is continuous on $\partial \Omega$ and it satisfies the admissibility condition (C2). Moreover, the OPC and the DCC hold. As a result, we constructed an instance of data satisfying the assumptions of Theorem \ref{tsuf-C}, part c. Hence, a unique solution exists in $\Omega$ with data $f$ due to Theorem \ref{tm-hun}.
\qed
\end{exe} 
 
%\textcolor{red}{We add an example of data for which the data consistency condition does not hold. In the first case there is no solution to the least gradient problem, in the second one there is a solution but with a level set structure different from one we considered here. MAKE A DRAWING FROM THE PHOTO!}

\section*{Acknowledgement} The work of the authors was in part
supported by the Research Grant 2015/19/P/ST1/02618 financed by the
National Science Centre, Poland, entitled: Variational Problems in
Optical Engineering and Free Material Design.

PR was in part supported by  the Research Grant no 2013/11/B/ST8/04436 financed by the National
Science Centre, Poland, entitled: Topology optimization of engineering structures. An approach synthesizing the methods of:
free material design, composite design and Michell-like trusses.

The work of AS was in part performed at the University of Warsaw.

The authors also thank Professor Tomasz Lewi\'nski of Warsaw Technological University for stimulating discussions and constant encouragement.

%All the authors thank the referee for his/her insightful comments, which helped to improve the paper.

\vspace{-.2cm}
\hspace{3cm}
\begin{wrapfigure}{l}{0.15\textwidth}
\includegraphics[width=2.5 cm, height= 1.5cm]{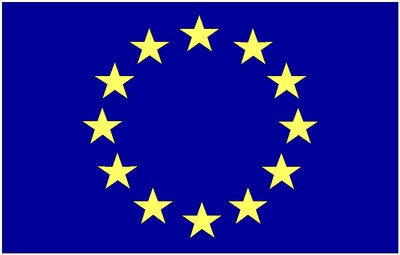}  
  %\caption{Birds}
\end{wrapfigure}
{%\footnotesize 
This project has received funding from the European Union's Horizon 2020 research and innovation program under the Marie Sk\l{}odowska-Curie grant agreement No 665778.}
\newline
 \hspace{-5cm}

\end{document}